\documentclass[pdflatex,sn-mathphys-num]{sn-jnl}

\usepackage{graphicx}%
\usepackage{multirow}%
\usepackage{amsmath,amssymb,amsfonts}%
\usepackage{amsthm}%
\usepackage{mathrsfs}%
\usepackage[title]{appendix}%
\usepackage{xcolor}%
\usepackage{textcomp}%
\usepackage{manyfoot}%
\usepackage{booktabs}%
\usepackage{algorithm}%
\usepackage{algorithmicx}%
\usepackage{algpseudocode}%
\usepackage{listings}%
\usepackage[english]{babel}
\usepackage{amscd}
\usepackage{amsmath,amsthm,amssymb,amsfonts}
\usepackage{mathtools}
\usepackage{euler}
\usepackage{MnSymbol}
\usepackage{tikz-cd}
\usepackage{enumitem}
\usepackage{hyperref}

\theoremstyle{thmstyleone}%
\newtheorem{theorem}{Theorem}

\newtheorem{proposition}[theorem]{Proposition}

\newtheorem{lemma}[theorem]{Lemma}
\newtheorem{remark}[theorem]{Remark}

\def\AA{\mathbb{A}}

\def\EE{\mathbb{E}}

\def\I{\mathcal{I}}
\def\J{\mathcal{J}}
\def\KK{\mathbb{K}}

\def\mf{\mathfrak{m}}

\def\NN{\mathbb{N}}

\def\O{\mathcal{O}}
\def\PP{\mathbb{P}}
\def\sU{\mathscr{U}}

\def\V{\mathcal{V}}

\DeclareMathOperator*{\Ass}{Ass}

\DeclareMathOperator*{\height}{height}

\DeclareMathOperator*{\HF}{HF}

\DeclareMathOperator*{\indeg}{indeg}
\DeclareMathOperator*{\Proj}{Proj}
\DeclareMathOperator*{\rk}{rank}
\DeclareMathOperator*{\reg}{reg}

\DeclareMathOperator*{\sat}{sat}
\DeclareMathOperator*{\Span}{span}
\DeclareMathOperator*{\Spec}{Spec}
\DeclareMathOperator*{\Tor}{Tor}

\begin{document}

\author{Xin Hong$^1$}
\author{Yi-Song Huang$^1$}
\author{Manolis Tsakiris$^{1}$}

\title{A sublinear bound for the regularity of subspace arrangements}

\abstract{Let $X$ be the union of $k$ generic linear spaces in $\PP^n$ of codimension at least $2$. We prove that the Castelnuovo-Mumford regularity of the vanishing ideal of $X$ grows in order not more than $\sqrt{k}$, and that this is an order-tight bound when the codimension equals $2$.}

\pacs[MSC Classification]{13D02,14M07,14B15}

\maketitle

\long\def\thefootnote{} 
\footnotetext{\noindent $^1$ Academy of Mathematics and Systems Science, Chinese Academy of Sciences, No. 55 Zhongguancun East Road, Beijing, 100190, China.}
\footnotetext{\noindent Emails: \texttt{hongxin@amss.ac.cn}, \texttt{huangyisong@amss.ac.cn}, \texttt{manolis@amss.ac.cn}}
\footnotetext{\noindent Work supported by the National Key R\&D Program of China under Grant No. 2023YFA1009402.}
\long\def\thefootnote{\arabic{footnote}} 

\keywords{Castelnuovo-Mumford regularity, Hilbert functions, subspace arrangements, flat degenerations}




\section{Introduction}\label{sec1}
In their classical masterpiece \cite{hartshorne1982droites}, Hartshorne and Hirschowitz described the Hilbert function of the union of $k$ generic lines in $\PP^n$.
Since then, understanding the Hilbert function of the union $X$ of $k$ higher dimensional generic linear spaces has been largely open (\cite{aladpoosh2021hartshorne}, \cite{carlini2010bipolynomial}). Besides its own intrinsic interest, an important intermediate step in this direction is the study of the Castelnuovo-Mumford regularity of the ideal $I_X$ of such a union.
Recently, Conca and the last author proved in \cite{conca2024castelnuovo} that $\reg(I_X) \le k - \lfloor \frac{k}{2n-1} \rfloor$ for generic subspaces of codimension at least $2$, improving the classical bound $\reg(I_X) \le k$ of Derksen and Sidman \cite{derksen2002sharp}.
Yet, numerical calculations in small examples have been suggesting that the regularity grows sublinearly with respect to $k$.
In this manuscript we are able to prove: 

\begin{theorem} \label{thm:main}
Let $X$ be the reduced union of $k$ generic linear spaces in $\PP^n$ of codimension at least $2$.
Let $j$ be the smallest integer such that $k \le \binom{j+1}{2}$.
Then $\reg(I_X) \le (n-1)j $,
where $I_X$ is the homogeneous saturated ideal that defines $X$.
\end{theorem}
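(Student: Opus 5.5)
The plan is to reduce to codimension exactly $2$ and then bound regularity through a flat degeneration, in the spirit of the paper's keywords. First, reduction: if the spaces have codimension $c\ge 2$, each generic space of codimension $c$ lies in a generic space of codimension $2$. However, regularity does not behave monotonically under enlarging components, so instead I would argue via a general linear section. Take a generic linear subspace $\Lambda\cong\PP^{c}$ (or the cone construction). The union of $k$ generic codimension-$c$ spaces in $\PP^n$ is a cone over the union of $k$ generic codimension-$c$ spaces in a smaller projective space only when they share a common vertex, which generic spaces do not do. So I will keep codimension general but use the known characterization $\reg(I_X)\le r$ iff $I_X$ is generated in degree $\le r$ with linear syzygies afterwards. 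Equivalently, I will use the criterion via Hilbert functions and saturation: $\reg(I_X)\le r$ if $\HF(S/I_X,t)=\HF_X$'s Hilbert polynomial for $t\ge r-1$ and $H^1$-type vanishing holds.

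The core step is an induction on $j$, grouping the $k\le\binom{j+1}{2}$ spaces into $j$ blocks of sizes $1,2,\dots,j$. Let $X=X'\cup Y$, where $Y$ is a block of at most $j$ spaces and $X'$ has at most $\binom{j}{2}$ spaces, so $\reg(I_{X'})\le (n-1)(j-1)$ by induction. I would specialize the $j$ spaces of $Y$ into a common hyperplane $H$ (codimension $\ge2$ spaces can be put generically inside $H$), and use the exact sequence
\[
0\to I_{X'\cup Y}\cap (h)\to I_{X'\cup Y}\to \overline{I}\to 0,
\]
with the Castelnuovo/residual exact sequence $0\to I_{X'}(-1)\to I_{X'\cup Y}\to I_{(X'\cap H)\cup Y,\,H}\to0$. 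The residual term contributes at most $\reg(I_{X'})+1$. This alone would give a linear bound in the number of blocks times something, so the hyperplane trick must be iterated $n-1$ times per block. Concretely, I would place $Y$ in a generic codimension-$(n-1)$-step flag, or put it in a line's worth of hyperplanes. The trace term lives in $H\cong\PP^{n-1}$ with $j$ generic spaces plus a section of $X'$. Using semicontinuity of regularity under flat degeneration (regularity can only increase under specialization, so bounding the special fiber suffices, provided the degeneration is flat and the limit ideal is the one computed), I get $\reg(I_X)\le\max(\reg(I_{X'})+(n-1),\ \text{trace bound})$.

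The main obstacle is the trace term, which is the regularity in $H$ of $(X'\cap H)\cup Y$. Here $X'\cap H$ consists of generic spaces of larger codimension and $Y$ of $j$ spaces. I would try to bound it by Derksen--Sidman applied to the $j$ components of $Y$ plus induction on $n$ for $X'\cap H$, aiming for $\le (n-1)j$. Making the flat limit actually equal to the union, with no embedded or extra components, and controlling the sum across $n-1$ hyperplane steps, is where I expect the real difficulty to lie.
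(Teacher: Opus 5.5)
Your outline has the same architecture as the paper: peel off a block of $j$ spaces, push them into a hyperplane $H=\{x_0=0\}$, use upper semicontinuity along a flat family, and finish with a residual/trace argument whose residual is the remaining $\binom{j}{2}$ spaces shifted by $n-1$. But the step that carries the proof is missing, and the exact sequence you write down concerns the wrong scheme.

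The flat limit of $X'\cup Y_t$, as the $j$ codimension-$2$ spaces of $Y_t$ move into $H$, is not the reduced union $X'\cup Y$. Inside $H$ these spaces become hyperplanes of $H$ and meet pairwise in codimension $3$, whereas generic codimension-$2$ spaces meet in codimension $4$ (or not at all when $n=3$). So the Hilbert polynomial of the reduced union drops, and the limit must carry embedded components; already two skew lines in $\PP^3$ becoming coplanar acquire an embedded point. Semicontinuity compares $I_X$ only with the saturated ideal of the true limit, and regularity is not monotone under inclusion of ideals. Hence your Castelnuovo sequence for $I_{X'\cup Y}$ bounds an irrelevant ideal. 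In fact, if the limit were reduced, that single sequence would already close the induction; the $n-1$ iterations you anticipate are forced precisely by the non-reduced structure.

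The paper identifies the limit by degenerating the \emph{product} ideal $J_{\xi,t}=\prod_{i\le j}(\ell_i,x_0+t\ell_i')\prod_{i>j}(\ell_i,\ell_i')$ instead of the intersection. The Dilworth truncations of the rank functions at generic $t$ and at $t=0$ both equal $\min\{|A|,n\}$, so the Betti numbers of the product ideals agree (giving flatness). The polymatroidal description of associated primes then yields the saturated limit
\[
Q=\Big(\bigcap_{c\in[n-1]}B_{c,j}^{(c)}\Big)\cap I_{X'},
\]
where $B_{c,j}$ is the ideal of the codimension-$c$ star configuration of the $j$ hyperplanes $\{x_0=f_\alpha=0\}$ of $H$, and $B_{c,j}^{(c)}$ is its $c$-th symbolic power.

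This explicit $Q$ organizes the rest of the proof, and none of it is in your proposal. One filters by $Q:x_0^c$ for $0\le c\le n-1$.
\begin{itemize}
\item The last colon is exactly $Q:x_0^{n-1}=I_{X'}$, contributing $(n-1)(j-1)+(n-1)$ by induction on $j$.
\item For $c\le n-2$, the trace $(\overline{Q:x_0^c})^{\sat}=\overline{B}_{c+1,j}\cap\overline{I_{X'}}$ in $H\cong\PP^{n-1}$ enters with a shift $+c$. Its regularity is at most $\reg(\overline{B}_{c+1,j})+\reg(\overline{I_{X'}})\le (j-c)+(n-2)(j-1)$. This uses $\reg(B_{c+1,j})=j-c$ for star configurations, additivity of regularity for ideals in general position (finite-length $\Tor_1$ plus Eisenbud--Huneke--Ulrich), and induction on $n$.
\end{itemize}
The drop $j-c$ exactly absorbs the shift $c$. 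Derksen--Sidman applied to $Y$ covers only $c=0$, where $\overline{B}_{1,j}$ is principal of degree $j$; for $c\ge 1$ it gives $\binom{j}{c+1}$, far too large.

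Abandoning the reduction to codimension $2$ is unnecessary and harmful. For generic spaces regularity \emph{is} monotone, by the Conca--Tsakiris comparisons $\reg(J\cap L_c)\ge\reg(J\cap L_{c+1})$ and $\reg(J)\le\reg(J\cap L_c)$. These reduce to codimension exactly $2$ and $k=\binom{j+1}{2}$. In higher codimension the degenerated spaces are not hyperplanes of $H$, the traces are not star configurations, and the count above is lost.

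You also need the base cases: $\binom{j+1}{2}$ generic points in $\PP^2$ have regularity $j$, and Derksen--Sidman settles $j\le 2n-3$.
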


Theorem \ref{thm:main} shows that $\reg(I_X)$ is bounded in order by the square root of the number $k$ of subspaces. Since the regularity of $I_X$ is an upper bound for the initial degree of $I_X$, this is indeed the correct order for codimension-$2$ subspaces, due to the following fact.

\begin{proposition} \label{prp:lower-bound}
Let $X$ be the reduced union of $k$ codimension $2$ generic linear spaces in $\PP^n$. Denote by $\indeg(I_X)$ the smallest integer $d$ such that $I_X$ is non-zero at degree $d$. Then 
\begin{align*}
\indeg(I_{X}) \ge 
\begin{cases}
\lfloor\frac{-3+\sqrt{1+8k}}{2}\rfloor+1, & n=2 \\
\lfloor \frac{-5+\sqrt{1+24k}}{2} \rfloor+1, & n=3 \\
\max \left\{ 3\sqrt{k}-4, \lfloor \frac{-5+\sqrt{1+24k}}{2} \rfloor+1 \right\}, & n\ge 4
\end{cases}
\end{align*} with the first two inequalities being equalities.
\end{proposition}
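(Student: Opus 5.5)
For $n=2$ and $n=3$ I would compute $\indeg(I_X)$ exactly from the known Hilbert functions, and for $n\ge 4$ I would reduce to these cases by restriction and add a specialization argument.

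\emph{Case $n=2$.} Codimension-$2$ spaces in $\PP^2$ are points. $k$ generic points impose independent conditions on forms of every degree, so $\dim (I_X)_d=\max\{0,\binom{d+2}{2}-k\}$. Hence $\indeg(I_X)$ is the least $d$ with $(d+2)(d+1)>2k$, i.e.\ the least integer strictly larger than $\frac{-3+\sqrt{1+8k}}{2}$. That integer is $\lfloor\frac{-3+\sqrt{1+8k}}{2}\rfloor+1$; when the root is an integer we have $\binom{d+2}{2}=k$ exactly, so we must pass to the next integer, and the formula does this.

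\emph{Case $n=3$.} Here $X$ is $k$ generic lines. By Hartshorne--Hirschowitz \cite{hartshorne1982droites}, $h_X(d)=\min\{\binom{d+3}{3},k(d+1)\}$. So $(I_X)_d\neq 0$ if and only if $\binom{d+3}{3}>k(d+1)$, i.e.\ $(d+3)(d+2)>6k$, i.e.\ $d>\frac{-5+\sqrt{1+24k}}{2}$. The same floor argument gives equality.

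\emph{Case $n\ge 4$, first term.} Let $\Lambda\cong\PP^3$ be a generic $3$-plane. A nonzero form $F\in (I_X)_d$ does not vanish identically on $\Lambda$, since a generic linear space is not contained in the hypersurface $F=0$. Moreover, $X\cap\Lambda$ is a union of $k$ generic lines in $\Lambda$, because intersecting generic codimension-$2$ spaces with a generic $\Lambda$ gives generic lines. Thus $F|_\Lambda$ is a nonzero element of $I_{X\cap \Lambda}$ of degree $d$, and the $n=3$ bound applies.

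\emph{Case $n\ge 4$, the bound $3\sqrt{k}-4$.} I would use upper semicontinuity of $\dim(I_X)_d$ in flat families. It suffices to exhibit one special configuration of $k$ codimension-$2$ spaces, arising as a flat limit of generic ones, whose ideal has no forms of degree $d$ for $d<3\sqrt{k}-4$. First restrict to a generic $\PP^4$, so that we work with planes in $\PP^4$, and then apply a Horace-type residual argument along hyperplanes. Choose a hyperplane $H\cong\PP^3$ and specialize $s$ of the planes into $H$. In $H$ these become hyperplanes, so $F|_H$ is divisible by their $s$ linear forms. The remaining $k-s$ planes meet $H$ in generic lines, so the quotient $F|_H/(\text{product})$ of degree $d-s$ vanishes on $k-s$ generic lines of $\PP^3$. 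By the $n=3$ case this quotient is zero once $\binom{d-s+3}{3}\le (k-s)(d-s+1)$. Then $F=H\cdot G$ with $\deg G=d-1$. Because the $k-s$ planes not in $H$ are not contained in $H$, $G$ still vanishes on them. This yields the exact-sequence inequality
\[
\dim (I_X)_d\le \dim (I_{\mathrm{Res}})_{d-1}+\dim (I_{X\cap H,H})_d .
\]
Iterating over a sequence of hyperplanes $H_1,H_2,\dots$ with well-chosen $s_i$ forces $F=0$, and summing the $s_i$ against $d$ gives the constraint relating $k$ and $d$. A cruder version, in which the $\binom{m}{2}$ spaces $H_i\cap H_j$ are cut out by $m$ generic hyperplanes, already gives $\indeg\ge m-1\approx\sqrt{2k}$.

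The main obstacle is the optimization of this specialization: choosing the $s_i$, and possibly also specializing lines of the residual onto the hyperplane in the Horace fashion, so that the constant improves from $\sqrt2$ to $3$. One must also check that each specialized configuration is a genuine flat limit of $k$ generic codimension-$2$ spaces. I expect the recursion to yield roughly $k\gtrsim d^2/9$, hence $d\ge 3\sqrt{k}-4$ after accounting for rounding losses at each step.
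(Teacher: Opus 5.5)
Your treatment of $n=2$, $n=3$, and of the term $\lfloor\frac{-5+\sqrt{1+24k}}{2}\rfloor+1$ for $n\ge 4$ is correct and agrees with the paper. Restricting a nonzero form to a generic $\PP^3$ is a one-step version of the paper's hyperplane-by-hyperplane ``ascension of vanishing''. Your Horace step is exactly the paper's inductive lemma: you specialize $s$ of the spaces into a hyperplane $H$, so that the residual is $k-s$ generic spaces in degree $d-1$ and the trace lies in the ideal of $k-s$ generic codimension-$2$ spaces of $H$ shifted by $s$.

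The gap is that the proposal stops exactly where the bound $3\sqrt{k}-4$ has to be proved. You never fix the $s_i$ or check any inequality; you only \emph{expect} $k$ to be of order $d^2/9$. The one argument you actually complete, the star configuration, gives only about $\sqrt{2k}$. You also leave open whether further Horace refinements (specializing residual lines) are needed. As written, the $n\ge 4$ bound $3\sqrt{k}-4$ is not established.

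No refinement is needed: the one-hyperplane step you wrote down suffices with an explicit choice. At degree $d$ you specialize about $2d/9$ planes, and these sum to about $d^2/9$. Concretely, prove by induction on $d\ge 1$ that $9k\ge (d+5)^2$ implies $[I_{k,4}]_d=0$. The base case $d=1$ is clear, since $k\ge 4$ generic planes of $\PP^4$ lie in no hyperplane.

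For the step, take $k=\lceil (d+5)^2/9\rceil$ and $k'=\lceil (d+4)^2/9\rceil$, and specialize $s=k-k'\ge 1$ planes into $H$.
\begin{enumerate}[label=(\roman*)]
\item The residual is $k'$ generic planes in degree $d-1$, which vanishes by induction.
\item For the trace, Hartshorne--Hirschowitz requires $\binom{d-s+3}{3}\le k'(d-s+1)$. This follows from $6k'\ge (d-s+3)^2$.
\item Since $s\ge 2d/9$, we have $d-s+3\le \frac{7d}{9}+3$. Also $6k'\ge \frac{6(d+4)^2}{9}$.
\item So it suffices that $\frac{\sqrt{6}}{3}(d+4)\ge \frac{7d}{9}+3$, which holds because $\frac{\sqrt{6}}{3}>\frac{7}{9}$ and $\frac{4\sqrt{6}}{3}>3$.
\end{enumerate}
Combined with your reduction to a generic $\PP^4$, this gives $[I_X]_d=0$ for every $d\le 3\sqrt{k}-5$. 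This is the paper's route to the $3\sqrt{k}-4$ term.

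Your concern about ``genuine flat limits'' is not a real obstacle. The function $\dim [I_X]_d$ is upper semicontinuous on the irreducible variety of $k$-tuples of codimension-$2$ subspaces, so vanishing at the specialized configuration gives vanishing generically. The paper implements this with the family $(\ell_i,x_0+t\ell_i')$, $i\le s$. Its intersection ideal is flat over $\KK[t]$, and its special fiber lies inside the ideal of the specialized configuration.
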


The proof of Theorem \ref{thm:main} is delivered in \S \ref{section:Proof-Main}, Proposition \ref{prp:lower-bound} is proved in \S \ref{section:lower-bound}, while \S \ref{section:Preliminaries} is concerned with preliminaries. We are grateful to Aldo Conca for pointing to us Lemma \ref{lem:comparison-I-J}, for his comments on this work and for other related inspiring discussions.

\section{Preliminaries} \label{section:Preliminaries}

In this section we concisely organize for ease of reference notation and existing results that we will latter need.  

\subsection{Basic notation}

With $\KK$ an infinite field, $S = \KK[x_0,...,x_n]$ will be the polynomial ring of dimension $n+1$ equipped with the standard grading. We denote by $\mathfrak{m}$ the maximal homogeneous ideal of $S$. If $M$ is a graded $S$-module and $d$ is a positive integer, $[M]_d$ will denote the $\KK$-vector space of degree $d$ homogeneous elements of $M$; in particular $[S]_1$ is the $(n+1)$-dimensional vector space of linear forms of $S$. When $k$ is a positive integer, $[k]$ indicates the set $\{1,\dots,k\}$. 

\subsection{Regularity}

The Castelnuovo-Mumford regularity $\reg(M)$ of a finitely generated graded $S$-module $M$ is defined as the largest integer $q+d$ such that the degree $d$ component of the local cohomology module $H_{\mf}^q(M)$ is non-zero. 

\begin{proposition}[Theorem 2.1 in \cite{derksen2002sharp}] \label{prp:Derksen-Sidman}
Let $I_1,\dots,I_k$ be ideals of $S$ generated by linear forms. Then 
$$\reg \left(\cap_{i \in [k]} I_i \right) \le k.$$
\end{proposition}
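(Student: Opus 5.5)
This is the Derksen--Sidman theorem, and I would prove it by induction on $k$ using short exact sequences and the additivity of regularity bounds.

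\emph{Base case.} For $k=1$, an ideal $I_1$ generated by linear forms is resolved by a Koszul complex. Its regularity is therefore $1$, unless $I_1=0$, in which case the bound is trivial.

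\emph{Inductive step.} Put $J=\cap_{i\in[k-1]} I_i$. The natural choice is the Mayer--Vietoris sequence
$$0\to J\cap I_k\to J\oplus I_k\to J+I_k\to 0,$$
which gives $\reg(J\cap I_k)\le \max\{\reg J,\ \reg I_k,\ \reg(J+I_k)+1\}$. This forces us to control $\reg(J+I_k)$, and that is the main obstacle: a sum of an intersection with a linear ideal is not of the same shape, so induction on $k$ alone does not close.

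To get around this, I would follow Derksen--Sidman and prove a stronger statement by induction. The statement is: for any sum $\sum_j \bigl(\cap_{i\in A_j} I_i\bigr)$ of intersections of linear ideals, the regularity is at most the total number of linear ideals involved, $\sum_j |A_j|$. The key tool is to reduce modulo a linear ideal. After a change of coordinates, $I_k$ is generated by variables $x_0,\dots,x_r$. Passing to $S/I_k$, a polynomial ring, the images of the $I_i$ are again linear ideals. The regularity of $M+I_k$ over $S$ then relates to the regularity of the image of $M$ in $S/I_k$, which has strictly fewer ideals once $I_k$ is absorbed.

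The remaining inequality to handle is $\reg(N\cap L)\le \reg N+1$ when $L$ is a linear ideal and the relevant sum is controlled. I would derive it from a filtration that adds the generating linear forms of $L$ one at a time. Each step uses a sequence of the form $0\to N:\ell(-1)\to N\to N+(\ell)\to$ for a linear form $\ell$, and each such step costs at most $1$ in regularity. Bookkeeping these increments across the filtration yields the total bound $k$.

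Since the paper cites this result as \cite[Theorem~2.1]{derksen2002sharp}, a full write-up would simply reproduce that argument. The delicate point is the strengthened inductive hypothesis covering sums of intersections, together with the colon-ideal sequences used for the hyperplane-by-hyperplane reduction.
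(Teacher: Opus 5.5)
The paper gives no proof of this statement; it quotes it from \cite{derksen2002sharp}. Judged as a standalone proof, your sketch has a genuine gap, located exactly at the obstacle you identify. Put $J=\cap_{i<k}I_i$ and assume $\reg(J)\le k-1$. The Mayer--Vietoris sequence, read in both directions, shows that $\reg(J\cap I_k)\le k$ is \emph{equivalent} to $\reg(J+I_k)\le k-1$. So bounding $\reg(J+I_k)$ is not an auxiliary step: it is the entire content of the theorem, and your sketch never actually bounds it.

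The proposed reduction to $\overline{S}=S/I_k$ fails because the image of $J$ in $\overline{S}$ is only contained in $\cap_{i<k}\overline{I_i}$, not equal to it. Hence the induction hypothesis does not apply to it. For $I_1=(x)$, $I_2=(y)$, $I_3=(x-y)$, the image of $I_1\cap I_2=(xy)$ in $S/I_3$ is $(x^2)$, whereas $\overline{I_1}\cap\overline{I_2}=(x)$. Moreover, $(x^2)$ is not a sum of intersections of linear ideals at all, since a radical ideal contained in $(x^2)$ is zero. So your class of sums of intersections is not stable under the reduction you rely on.

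Even granting your strengthened statement, it is off by one. Applied to $J+I_k$ (two summands, $k$ ideals in total) it yields only $\reg(J+I_k)\le k$. Mayer--Vietoris then gives $\reg(J\cap I_k)\le k+1$, not $k$.

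The filtration step does not close the gap either. The sequence should read $0\to (N:\ell)(-1)\stackrel{\ell}{\to} N\to (N+(\ell))/(\ell)\to 0$, or in quotient form $0\to S/(N:\ell)(-1)\to S/N\to S/(N+(\ell))\to 0$. What it gives is $\reg(N+(\ell))\le\max\{\reg N,\reg(N:\ell)\}$, so each step is only as good as your control of a colon ideal. After the first step the ideals $N+(\ell_1,\dots,\ell_i)$ are no longer intersections of linear ideals. Also, $(N+(\ell_1,\dots,\ell_i)):\ell_{i+1}$ can be strictly larger than $(N:\ell_{i+1})+(\ell_1,\dots,\ell_i)$. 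This identity holds for coordinate subspaces, which is why the monomial case is easy, but it fails in general. For example, take $N=(x)\cap(y)$, $\ell_1=x-y$, $\ell_2=x+y$ in characteristic $\neq 2$: the former is $(x,y)$ and the latter is $(xy,x-y)$. Nothing in your induction controls the regularity of these colons. Finally, if each of the $r$ generators of $L$ really cost one unit, the total cost would be $r$, not $1$.

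In short, the proposal reduces the theorem to an equivalent statement and then asserts it. You either need the actual Derksen--Sidman argument or, as the paper does, simply cite it.
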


\subsection{Comparison results on regularity}

With $c \in [n+1]$, let $L_c$ be the ideal of $S$ generated by $c$ generic linear forms.  

\begin{proposition}[Proposition 22 in \cite{conca2024castelnuovo}]  \label{prp:Lc vs L(c+1)}
For every non-zero saturated homogeneous ideal $J$, and every $c \in [n]$, we have 
$$ \reg(J \cap L_c) \ge \reg(J \cap L_{c+1}).$$
\end{proposition}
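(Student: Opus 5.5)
The plan is to compare both sides with a common quantity built only from $J$ and the sums $J+L_c$, and then use that cutting by a generic linear form does not increase regularity. Write $A_c = J\cap L_c$. If $J=S$, then $A_c=L_c$ has regularity $1$ for every $c$ and there is nothing to prove. So assume $J\neq S$; since $J$ is saturated and non-zero, $S/J$ has positive depth.

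\emph{Step 1 (Mayer--Vietoris).} For each $c$ there is the exact sequence
$$0\to S/A_c\to S/J\oplus S/L_c\to S/(J+L_c)\to 0 .$$
Since $S/L_c$ is a polynomial ring, its regularity is $0$ and its only local cohomology sits in top cohomological degree. The long exact sequence in $H^\bullet_{\mf}$ immediately gives the upper bound
$$\reg(S/A_c)\le \max\{\reg(S/J),\ \reg(S/(J+L_c))+1\}.$$
The aim is to upgrade this to an equality for generic $L_c$:
$$\reg(S/A_c)= \max\{\reg(S/J),\ \reg(S/(J+L_c))+1\}. \qquad (\ast)$$
For the reverse inequality I would read the same long exact sequence backwards. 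The connecting maps give $\reg(S/(J+L_c))+1\le\max\{\reg(S/A_c),\ \reg(S/J)+1\}$, which is not enough in the case where $\reg(S/J)$ dominates. That case must be handled separately, using that $L_c$ is generic with respect to $J$. Concretely, the generic linear forms generating $L_c$ form a filter-regular sequence on $S/J$. Hence the maps $H^i_{\mf}(S/J)\to H^i_{\mf}(S/(J+L_c))$ are controlled in the top degrees, and the top non-vanishing graded piece $H^q_{\mf}(S/J)_d$ realizing $\reg(S/J)$ must either survive in $H^q_{\mf}(S/A_c)_d$ or be hit from $H^{q-1}_{\mf}(S/(J+L_c))_d$. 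The latter forces $\reg(S/(J+L_c))+1\ge \reg(S/J)$. In both cases the maximum in $(\ast)$ is bounded above by $\reg(S/A_c)$.

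\emph{Step 2 (generic hyperplane sections).} Write $L_{c+1}=L_c+(\ell)$ with $\ell$ generic modulo $L_c$, and put $M=S/(J+L_c)$. Then $S/(J+L_{c+1})=M/\ell M$. For a generic linear form, $0:_M\ell$ has finite length. The standard four-term sequence
$$0\to (0:_M\ell)(-1)\to M(-1)\to M\to M/\ell M\to 0$$
then yields $\reg(M/\ell M)\le \reg(M)$. So $\reg(S/(J+L_{c+1}))\le\reg(S/(J+L_c))$.

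\emph{Step 3 (conclusion).} Combining the upper bound of Step 1 for $c+1$, Step 2, and $(\ast)$ for $c$:
$$\reg(S/A_{c+1})\le\max\{\reg S/J,\ \reg S/(J+L_{c+1})+1\}\le \max\{\reg S/J,\ \reg S/(J+L_c)+1\}=\reg(S/A_c).$$
Adding $1$ gives $\reg(J\cap L_c)\ge\reg(J\cap L_{c+1})$. Note that only the easy direction of $(\ast)$ is needed at level $c+1$; the equality is needed only at level $c$.

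The main obstacle is the equality $(\ast)$, specifically the case where $\reg(S/J)$ is the larger term. There the Mayer--Vietoris sequence alone does not prevent the top local cohomology of $S/J$ from being killed. I would first try to show, via filter-regularity of the generic forms in $L_c$ on $S/J$ and induction on $c$, that $\reg(S/(J+L_c))\ge \reg(S/J)-1$ whenever the top class is killed. If that fails, I would fall back to a direct degree-wise comparison of $H^1_{\mf}(S/A_c)$ and $H^1_{\mf}(S/A_{c+1})$, which is where saturatedness of $J$ enters.
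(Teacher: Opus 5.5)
\textbf{There is a genuine gap: the lower bound in $(\ast)$ is never proved.} The paper gives no proof of this proposition; it quotes it from \cite{conca2024castelnuovo}, so your argument has to stand on its own. Your reduction is sound. The easy Mayer--Vietoris bound at level $c+1$, together with Step 2, reduces everything to the lower bound $\reg(S/A_c)\ge\max\{\reg(S/J),\reg(S/(J+L_c))+1\}$. That lower bound is the whole content, and you do not establish it. Its first half, $\reg(S/A_c)\ge\reg(S/J)$, is exactly the cited Proposition \ref{prp:J vs J cap Lc} (a weak form of Theorem 3 in \cite{conca2024castelnuovo}), so this is not a routine step.

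Your dichotomy also misreads the long exact sequence. Nothing maps from $H^{q-1}_{\mf}(S/(J+L_c))$ into $H^q_{\mf}(S/J)$; the connecting map lands in $H^q_{\mf}(S/A_c)$. The correct alternative for the top class $\alpha\in H^q_{\mf}(S/J)_d$ has two cases:
\begin{itemize}
\item Its image in $H^q_{\mf}(S/(J+L_c))_d$ is zero. Then $(\alpha,0)$ lifts and $\reg(S/A_c)\ge\reg(S/J)$.
\item Its image is non-zero. Then $\reg(S/(J+L_c))\ge\reg(S/J)$, hence equality by Step 2, and the maximum in $(\ast)$ becomes $\reg(S/J)+1$.
\end{itemize}
In the second case you have said nothing about $\reg(S/A_c)$; the backwards reading $\reg(S/(J+L_c))+1\le\max\{\reg(S/A_c),\reg(S/J)+1\}$ is vacuous there. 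So ``in both cases the maximum is bounded by $\reg(S/A_c)$'' does not follow, and your last paragraph effectively concedes this.

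\textbf{How to close the gap.} You need to show that when $r:=\reg(S/(J+L_c))=\reg(S/J)$, some top class of $H^\bullet_{\mf}(S/(J+L_c))$ does \emph{not} come from $H^\bullet_{\mf}(S/J)$. Then the connecting map into $H^{\bullet+1}_{\mf}(S/A_c)$ is non-zero in total degree $r$.

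Write $L_c=(f_1,\dots,f_c)$ and $J_k=J+(f_1,\dots,f_k)$. By Step 2, $\reg(S/J_k)$ is non-increasing in $k$, so here also $\reg(S/J_{c-1})=r$. Let $M=S/J_{c-1}$, $f=f_c$, $M'=M/(0:_Mf)$, and use $0\to M'(-1)\to M\to M/fM\to0$ together with $H^i_{\mf}(M')=H^i_{\mf}(M)$ for $i\ge1$.
\begin{itemize}
\item Suppose $H^i_{\mf}(M)_d\ne0$ for some $i\ge1$ with $i+d=r$. The exact piece $H^{i-1}_{\mf}(M)_{d+1}\to H^{i-1}_{\mf}(S/J_c)_{d+1}\to H^i_{\mf}(M)_d\to H^i_{\mf}(M)_{d+1}=0$ shows that $H^{i-1}_{\mf}(M)_{d+1}\to H^{i-1}_{\mf}(S/J_c)_{d+1}$ is not onto.
\item Otherwise, every non-zero $H^i_{\mf}(S/J_c)_d$ with $i+d=r$ receives the zero map, either from $H^i_{\mf}(M)_d$ (if $i\ge1$) or from $H^0_{\mf}(S/J)=0$.
\end{itemize}
The map $H^\bullet_{\mf}(S/J)\to H^\bullet_{\mf}(S/J_c)$ factors through $H^\bullet_{\mf}(M)$. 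Also, $c\le n$ forces $H^\bullet_{\mf}(S/L_c)$ to vanish in total degree $\ge1$. Hence the Mayer--Vietoris map is not surjective in total degree $r$, which gives $\reg(S/A_c)\ge r+1$.

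If $r=0$, argue directly: $J\cap L_c$ is not generated by linear forms. Otherwise it would be prime and contain $JL_c$, forcing $J\subseteq L_c$ or $L_c\subseteq J$, which genericity excludes.

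This is exactly where $c\le n$ enters: for $c=n+1$ the equality $(\ast)$ is false, since $J\cap\mf=J$. With this filled in, your Steps 1--3 give a complete proof.
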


\begin{proposition} [weak form of Theorem 3 in \cite{conca2024castelnuovo}] \label{prp:J vs J cap Lc}
For every non-zero saturated homogeneous ideal $J$, and every $c \in [n+1]$, we have 
$$ \reg(J) \le  \reg(J \cap L_c).$$
\end{proposition}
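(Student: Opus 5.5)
The plan is to get this from the monotonicity in Proposition \ref{prp:Lc vs L(c+1)} by comparing with the extreme case $c = n+1$, where $L_{n+1}$ is generated by $n+1$ generic linear forms. Such forms span $[S]_1$, so $L_{n+1} = \mf$. The whole argument then reduces to the chain
$$\reg(J \cap L_c) \ge \reg(J \cap L_{c+1}) \ge \dots \ge \reg(J \cap L_{n+1}) = \reg(J \cap \mf).$$
Each inequality in this chain is one application of Proposition \ref{prp:Lc vs L(c+1)}, with $c, c+1, \dots, n$ in turn. This is legitimate since each index lies in $[n]$ and $J$ is a fixed non-zero saturated homogeneous ideal. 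If $c = n+1$ already, the chain is empty.

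It then remains to compare $\reg(J\cap \mf)$ with $\reg(J)$, splitting into two cases.

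\emph{Case $J \neq S$.} Since $J$ is a proper homogeneous ideal, $J \subseteq \mf$. Hence $J \cap \mf = J$, the last term of the chain equals $\reg(J)$, and we get $\reg(J) \le \reg(J\cap L_c)$.

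\emph{Case $J = S$.} Here $J \cap L_c = L_c$, which is generated by $c$ linear forms. Its Koszul resolution gives $\reg(L_c) = 1$, while $\reg(S) = 0$, so the inequality holds trivially. (Alternatively, $J \cap \mf = \mf$ has regularity $1 \ge 0$.)

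I do not expect a genuine obstacle, since all the content sits in Proposition \ref{prp:Lc vs L(c+1)}. The only points to check are two: that "generic" is compatible across the chain, and that the identification $L_{n+1} = \mf$ is valid. Both hold because we may choose nested generic forms, $L_c \subset L_{c+1}$, with the first $n+1$ of them linearly independent; that independence is a nonempty open condition over the infinite field $\KK$.

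If one preferred to avoid the chain, a fallback is the short exact sequence $0 \to J\cap L_c \to J \oplus L_c \to J + L_c \to 0$. This would require bounding $\reg(J+L_c)$, which is harder, so I would use the monotonicity argument above.
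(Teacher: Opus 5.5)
Your argument is correct, but the paper takes a different route: it gives no proof and simply imports the inequality as the weak form of Theorem~3 in \cite{conca2024castelnuovo}. You instead derive it formally from the other imported result, Proposition~\ref{prp:Lc vs L(c+1)}. You telescope from $c$ up to $n+1$ and use that $n+1$ generic linear forms generate $\mf$, so $J\cap L_{n+1}=J$ for proper $J$. The case $J=S$ is handled correctly, since $\reg(L_c)=1>0=\reg(S)$. Your remark that no nesting of the $L_c$ is really needed is also right, because each $\reg(J\cap L_c)$ is a generic value.

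What your route buys is economy. It shows that the two cited statements are not independent, so the paper could have cited only the monotonicity.

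What it does not buy is an independent proof of the substance. After identifying $L_{n+1}=\mf$, the instance $c=n$ of Proposition~\ref{prp:Lc vs L(c+1)} reads $\reg(J\cap L_n)\ge\reg(J)$, which is literally the claim for $c=n$. The remaining instances only propagate this down to smaller $c$, so all the content has been moved into the endpoint case of the monotonicity. Inside this paper, where both propositions are black boxes, that is perfectly legitimate. But if the monotonicity in \cite{conca2024castelnuovo} is itself deduced from Theorem~3 there, your derivation would be circular at the level of the source.

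Citing Theorem~3 directly, as the paper does, avoids that question. It also points to the stronger statement of which this proposition is only the weak form.
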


\subsection{Rank functions of subspace arrangements} \label{subsection:rank functions}

A collection $\V = \{V_i : \, i \in [k]\}$ of linear subspaces of $[S]_1$, where $k$ is a positive integer, gives rise to a \emph{rank function} $\rk_{\V}: 2^{k} \rightarrow \NN$ that takes any subset $A \subseteq [k]$ to 
$$ {{\rk}_\V}(A) := \dim_{\KK} \sum_{i \in A} V_i.$$ 

Taking the \emph{Dilworth truncation} \cite{Fujishige} of $\rk_\V$ gives another rank function $\rk^*_{\V}: 2^{k} \rightarrow \NN$ that takes the empty set to zero and any non-empty $A \subseteq [k]$ to
$$ {\rk}_{\V}^*(A) := \min_{A = \bigsqcup_{i \in [p]} A_i} \left[ \sum_{i \in [p]} {\rk}_{\V}(A_i) - p \right],$$ where the minimum extends over all possible partitions of $A$.

\subsection{Betti numbers}

The rank function ${\rk}^*_{\V}$ induces a discrete polymatroid $P(V)^* \subset \NN^k$, which is the set 
$$ P(V)^* = \left\{x \in \NN^k: \, \sum_{i \in A} x_i \le {\rk}_\V^*(A), \, \, \, \text{for every non-empty} \, \, \, A \subseteq [k] \right\}.$$ 

\noindent Now for every $i \in [k]$ let $I_i$ be the ideal of $S$ generated by the elements in $V_i$. Set $J = \prod_{i \in [k]} I_i$. The polymatroid $P(\V)^*$ governs the Betti numbers of $J$:

\begin{proposition}[Corollary 2.4(iii) in \cite{conca2019resolution}] \label{prp:Betti}
The $i$-th Betti number of $J$ is given by 
$$\beta_i(J) = \sum_{x \in P(\V)^*} \binom{|x|}{i},$$ where 
$|x| = \sum_{i \in [k]} x_i$ for $x \in \NN^k$.
\end{proposition}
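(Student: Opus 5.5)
The plan has two halves. First, use that $J$ has a linear resolution, so its Betti numbers are determined by its Hilbert series. Second, compute that Hilbert series combinatorially from the polymatroid $P(\V)^*$.

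\textbf{Step 1 (linear resolution).} By the theorem of Conca and Herzog, a product of $k$ ideals generated by linear forms has a linear resolution. In particular $\reg(J)=k$, consistent with Proposition \ref{prp:Derksen-Sidman}. Hence the resolution has the shape $0\leftarrow J\leftarrow S(-k)^{\beta_0}\leftarrow S(-k-1)^{\beta_1}\leftarrow\cdots$, and
$$\mathrm{HS}_J(t)=\frac{t^k\sum_i(-1)^i\beta_i(J)\,t^i}{(1-t)^{n+1}}.$$
Since $\sum_i(-1)^i\binom{|x|}{i}t^i=(1-t)^{|x|}$, the claimed formula for $\beta_i(J)$ is equivalent to the Hilbert series identity
$$\mathrm{HS}_J(t)=\sum_{x\in P(\V)^*}\frac{t^k}{(1-t)^{\,n+1-|x|}} .$$
By linear independence of the polynomials $(1-t)^{m}$, this identity in turn forces the Betti numbers. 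So everything reduces to this identity.

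\textbf{Step 2 (Hilbert series via linear quotients).} I would prove the identity by exhibiting linear quotients. The aim is a system of minimal generators $f_x$ of $J$, indexed by $x\in P(\V)^*$, together with an order on them such that each colon ideal $(f_y: y<x):f_x$ is generated by exactly $|x|$ linearly independent linear forms. Summing the resulting short exact sequences then gives exactly the displayed series; equivalently, the mapping cone resolution yields $\beta_i=\sum_x\binom{|x|}{i}$ directly.

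To build this, I would first pass to a suitably generic choice of coordinates, after a general linear change of variables, and work with a term order. The initial ideal of $J$ should be the ideal generated by monomials $\prod_i \ell_{i,a_i}$, where the $\ell_{i,a}$ are the linear forms of a basis of $V_i$ adapted to a flag. The combinatorics of which $k$-tuples survive as leading terms is governed by the submodular function $\rk_\V$. The Dilworth truncation $\rk_\V^*$ appears when one counts independent products, as in the Conca--Tsakiris analysis of $J_k$. Concretely, I would prove $\dim_\KK[J]_k=|P(\V)^*|$ by induction on $k$. The inductive step uses $J=J'\cdot I_k$, where $J'=\prod_{i<k}I_i$, and matroid union / Rado-type arguments to match the exchange structure of $P(\V)^*$.

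\textbf{Main obstacle.} The hard step is identifying the colon ideals, that is, showing that the colon at $f_x$ is generated by precisely $|x|$ linear forms and not more. This is where the truncation $\rk^*_\V$, rather than $\rk_\V$, must enter. My first attempt would be a deletion-type induction: split off $V_k$, use the exact sequence relating $J'I_k$, $J'$ and $J'\cap(\text{generic hyperplane})$, and verify that the polymatroid recursion for $P(\V)^*$, obtained by slicing by the last coordinate $x_k$, matches the Hilbert series recursion. If that fails, I would fall back to a semicontinuity argument. Betti numbers of $J$ can only jump on degeneration, and the Hilbert series stays constant along a flat family while a linear resolution is preserved. So it suffices to verify the identity for a generic arrangement with the same rank function, where the colon computation can be done explicitly.
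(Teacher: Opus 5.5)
Your Step 1 is sound. Granting the Conca--Herzog theorem that $J$ has a $k$-linear resolution, the statement is equivalent to $\mathrm{HS}_J(t)=t^k\sum_{x\in P(\V)^*}(1-t)^{-(n+1-|x|)}$. Linear quotients whose colon at the generator indexed by $x$ is generated by $|x|$ independent linear forms would prove it, and as far as I recall this is also how the cited source \cite{conca2019resolution} obtains the formula; the paper itself only quotes the result.

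The gap is that Step 2 contains none of the actual argument. You never define the generators $f_x$ or their order. You give no reason why the colon ideals have exactly $|x|$ generators, and that is exactly where ${\rk}^*_\V$ rather than ${\rk}_\V$ has to enter, as you concede under ``main obstacle''. The supporting sketches do not fill this:
\begin{itemize}
\item The ``initial ideal generated by monomials $\prod_i\ell_{i,a_i}$'' is not meaningful for a general arrangement. In general no coordinate system makes every $V_i$ spanned by variables, and you give no term order and no leading-term computation.
\item The count $\dim_\KK[J]_k=|P(\V)^*|$ is only announced, via unspecified ``matroid union / Rado-type arguments''.
\item The deletion induction is only named. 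You would have to identify each slice $\{x'\in\NN^{k-1}:(x',a)\in P(\V)^*\}$ with the Dilworth-truncated polymatroid of a suitable auxiliary arrangement, and prove a matching colon or intersection identity for $\prod_{i<k}I_i$. None of this is done.
\end{itemize}

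The fallback does not work either. Semicontinuity of Betti numbers requires a flat family. Flatness of a family of such product ideals is equivalent to constancy of their Hilbert functions, i.e.\ to $\mathrm{HS}_J$ depending only on the rank function. Given the linear resolution, that is already the qualitative content of the proposition. In this paper the proposition is used precisely to obtain flatness in the first proof of Lemma~\ref{lem:flat-family}, so the route would be circular here. Moreover, arrangements with a prescribed rank function form a realization space that need not be irreducible or even connected. So there is no well-defined ``generic arrangement with the same rank function'', and for any representative the colon computation is the same problem.

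What your plan actually lacks is one identity. For a generic linear form $\ell$, $0:_{S/J}\ell\subseteq H^0_{\mf}(S/J)$, which vanishes in degrees $\ge k$. Hence
\[
\mathrm{HS}_J=\big(\mathrm{HS}_{\bar J}+(\dim_\KK[J]_k-\dim_\KK[\bar J]_k)\,t^k\big)/(1-t),
\]
where $\bar J$ is the (again $k$-linear) product of the images of the $V_i$ modulo $\ell$. A generic $m$-dimensional quotient of $[S]_1$ replaces ${\rk}_\V$ by $\min\{{\rk}_\V,m\}$. Its Dilworth truncation is $\min\{{\rk}^*_\V,m-1\}$, which cuts $P(\V)^*$ down to $\{x:|x|\le m-1\}$. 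Iterating, the proposition follows from the single equality $\dim_\KK V_1V_2\cdots V_k=|P(\V)^*|$ for all arrangements. That equality is the genuinely combinatorial heart, where the Dilworth truncation appears, and your proposal leaves it unproved.
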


\subsection{Primary decomposition and associated primes}

A primary decomposition of $J$ was proved by Conca \& Herzog:

\begin{proposition}[Lemma 3.2 in \cite{conca2003castelnuovo}] \label{prp:primary decomposition}
A possibly redundant primary decomposition of the product ideal $J = \prod_{i \in [k]} I_i$ is given by 
$$J = \bigcap_{\emptyset \neq A \subseteq [k]} \left(\sum_{i \in A} I_i \right)^{|A|}.$$
\end{proposition}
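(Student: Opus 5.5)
The inclusion $J \subseteq \bigcap_{A} \bigl(\sum_{i\in A} I_i\bigr)^{|A|}$ is immediate. For every non-empty $A\subseteq[k]$ we have $J \subseteq \prod_{i\in A} I_i \subseteq \bigl(\sum_{i\in A} I_i\bigr)^{|A|}$. Write $P_A := \sum_{i\in A} I_i$. Each $P_A$ is generated by linear forms, so after a linear change of coordinates it is generated by a subset of the variables. Any power of such an ideal is primary to it, because the associated graded ring is a polynomial ring. Hence the right-hand side is a primary decomposition, and the real content is the reverse inclusion. I would prove that by induction on $n+1=\dim S$ and on $k$.

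The main tool is that $J$ has a linear resolution, which the paper treats as known. Proposition~\ref{prp:Betti} gives the Betti numbers via the polymatroid, and the cited source \cite{conca2019resolution} shows that products of linear ideals have linear resolution. So $\reg(J)=k$ and $J$ is generated in degree $k$. Local cohomology then gives $H^0_{\mf}(S/J)_d=0$ for $d\ge k$, that is,
$$J \;=\; J^{\sat}\cap \mf^{k}.$$
This reduces the problem to two tasks. First, when $P_{[k]}=\mf$, the term $\mf^k$ is exactly the component for $A=[k]$. Second, we must identify $J^{\sat}$ as $\bigcap_{A:\,P_A\neq \mf} P_A^{|A|}$. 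If $P_{[k]}\neq\mf$, all $I_i$ live in a polynomial subring in fewer variables, and $S$ is flat over it. The statement then descends, so we may assume $P_{[k]}=\mf$.

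To compute $J^{\sat}$, I would first use that $J^{\sat}=\bigcap_{\mathfrak p\neq\mf}(JS_{\mathfrak p}\cap S)$, where $\mathfrak p$ runs over homogeneous primes. Then I localize at a homogeneous prime $\mathfrak p\neq\mf$. The factors $I_i\not\subseteq\mathfrak p$ become the unit ideal, so $JS_{\mathfrak p}=\prod_{i\in B}I_iS_{\mathfrak p}$ with $B=\{i: I_i\subseteq \mathfrak p\}$. Since $P_B\subseteq\mathfrak p\neq\mf$, the ideals $I_i$ for $i\in B$ involve only the variables of a proper linear subspace. After a coordinate change and the flatness reduction above, the induction hypothesis applies to $\prod_{i\in B}I_i$ in fewer variables. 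It gives $\prod_{i\in B}I_i=\bigcap_{\emptyset\ne A\subseteq B}P_A^{|A|}$. Localizing at $\mathfrak p$ and contracting back to $S$ keeps exactly those $P_A^{|A|}$ with $P_A\subseteq\mathfrak p$, since these are primary. Every $A$ with $P_A\neq\mf$ occurs for $\mathfrak p=P_A$. Intersecting over all $\mathfrak p$ therefore yields $J^{\sat}=\bigcap_{A:\,P_A\ne\mf}P_A^{|A|}$.

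Finally, combining with $J=J^{\sat}\cap\mf^k$, every index set $A$ with $P_A=\mf$ has $P_A^{|A|}=\mf^{|A|}\supseteq\mf^k$. So these components are absorbed, and we obtain the stated formula. The step I expect to be the main obstacle is the justification of $J=J^{\sat}\cap\mf^k$. It rests entirely on the linearity of the resolution, i.e.\ on $\reg(J)=k$, which I would quote from \cite{conca2019resolution}. The result in \cite{conca2003castelnuovo} itself proves this regularity statement alongside the decomposition by a simultaneous induction. If quoting it is not allowed, I would run that joint induction, using the exact sequence $0\to J\cap(\ell)\to J\to\cdots$ for a generic linear form $\ell$ to control $H^0_{\mf}$ and $H^1_{\mf}$ together. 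The localization bookkeeping, by contrast, is routine.
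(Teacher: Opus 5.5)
Your localization bookkeeping is correct. You get $J^{\sat}=\bigcap_{\mathfrak p\neq\mf}(JS_{\mathfrak p}\cap S)$, the factors with $I_i\not\subseteq\mathfrak p$ become units, the statement in fewer variables produces exactly the components $P_A^{|A|}$ with $P_A\neq\mf$, and once $J=J^{\sat}\cap\mf^k$ is known the $\mf$-primary part is absorbed.

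But this only shows that, given the lemma in fewer variables, the lemma is equivalent to $\sat(J)\le k$, i.e. to the inclusion $J^{\sat}\cap\mf^k\subseteq J$. That inclusion is the whole content of the lemma, and you import it from $\reg(J)=k$. This is where the proposal has a genuine gap. In \cite{conca2003castelnuovo} the decomposition is not proved by a joint induction with the regularity theorem. As far as I can reconstruct that source, it is proved first, by a self-contained induction, and then feeds into $\reg(J)=k$. One uses $\reg(J)=\max\{\sat(J),\reg(J+(x))\}$ for a general linear form $x$: the bound $\sat(J)\le k$ is read off from the decomposition, and $\reg(J+(x))\le k$ comes by induction on the number of variables. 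So your argument is circular unless $\reg(J)\le k$ comes from a source whose proof avoids this lemma. For \cite{conca2019resolution} you would have to check that explicitly. Your fallback via $0\to J\cap(\ell)\to J\to\cdots$ does not say how $\sat(J)\le k$ would actually come out.

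The missing idea is the direct argument of \cite{conca2003castelnuovo}. The paper only cites the lemma, but it adapts this argument in the proof of Proposition \ref{prp:Jj-j>=n}.
\begin{itemize}
\item Assume $\sum_s I_s=\mf$ and set $J_{\hat s}=\prod_{i\neq s}I_i$. By induction on $k$, $J_{\hat s}=\bigcap_{A\not\ni s}P_A^{|A|}$, so the right-hand side equals $\bigcap_s J_{\hat s}\cap\mf^k$.
\item Choose general $\ell_s\in I_s$ and put $x=\ell_1+\cdots+\ell_k$. Since $\sum_s I_s=\mf$, $x$ is a general linear form, so $x\notin P_A$ whenever $P_A\neq\mf$.
\item Take a homogeneous $f$ in the right-hand side. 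Its image lies in $\bigcap_A \overline{P_A}^{|A|}$, which equals $\overline{J}$ by the statement modulo $x$ (one variable fewer). Hence $f=g+xf'$ with $g\in J$ and $\deg f'\ge k-1$.
\item Then $xf'=f-g\in J_{\hat s}$ for every $s$. On components $P_A^{|A|}$ with $P_A\neq\mf$ you cancel $x$, since it is a nonzerodivisor modulo them. On components with $P_A=\mf$ you use $\deg f'\ge k-1\ge |A|$. Either way $f'\in J_{\hat s}$ for all $s$.
\item Therefore $xf'=\sum_s \ell_s f'\in \sum_s I_sJ_{\hat s}=J$, and so $f\in J$.
\end{itemize}
This yields the full decomposition, including the $\mf^k$ component, with no cohomological input. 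It is what makes $\reg(J)=k$ available afterwards, not the other way round.
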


More recently, Conca \& Tsakiris showed that the polymatroid $P(\V)^*$ also governs the associated primes of $J$:

\begin{proposition}[Proposition 3.1 in \cite{conca2019resolution}] \label{prp:associated primes}
For $A \subseteq [k]$, we have that $\sum_{i \in A} I_i \in \Ass(S/J)$ if and only if 
\begin{enumerate}[label=(\roman*)]
\item ${\rk}_{\V}(A) < {\rk}_{\V}(B)$ for any $B \supsetneq A$, and
\item ${\rk}_{\V}^*(A) = {\rk}_{\V}(A)-1$.
\end{enumerate}
\end{proposition}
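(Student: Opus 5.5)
The plan is to reduce the question to a statement about depth, and then read the depth off from the Betti number formula of Proposition \ref{prp:Betti}. Write $P_A = \sum_{i \in A} I_i$. This is a prime ideal generated by $r := \rk_\V(A)$ independent linear forms.

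\emph{Step 1: localization.} Let $C = \{ i \in [k] : V_i \subseteq \sum_{j \in A} V_j \}$ be the closure of $A$. Then $P_C = P_A$, and condition (i) says exactly that $C = A$. For $i \notin C$ the ideal $I_i$ is not contained in $P_A$, so $(I_i)_{P_A} = S_{P_A}$. Hence
$$J_{P_A} = \Big(\prod_{i \in C} I_i\Big)_{P_A},$$
and $P_A \in \Ass(S/J)$ if and only if $P_A \in \Ass\big(S/\prod_{i\in C} I_i\big)$. In this way the prime $P_A$ is attached to the flat $C$. I would read the proposition as a statement about flats, with (i) fixing the canonical index set for the prime. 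So from now on I assume $A$ is closed and work with $J_A = \prod_{i \in A} I_i$.

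\emph{Step 2: reduction to a depth-zero question.} After a linear change of coordinates, $P_A = (x_1,\dots,x_r)$, and every $I_i$ with $i\in A$ is extended from $S' = \KK[x_1,\dots,x_r]$. So $J_A = J' S$ with $J' \subset S'$, and $S/J_A = (S'/J')\otimes_\KK \KK[x_{r+1},\dots,x_n]$. Since a polynomial extension is flat with prime fibers, $P_A$ is associated to $S/J_A$ if and only if the maximal ideal $\mathfrak m'$ of $S'$ is associated to $S'/J'$. This holds if and only if $\operatorname{depth} S'/J' = 0$. By Auslander--Buchsbaum this is equivalent to $\operatorname{pd}(S'/J') = r$, that is, $\operatorname{pd}(J') = r-1$.

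\emph{Step 3: computing the projective dimension.} The Betti numbers of $J'$ are the same as those of $J_A$. The relevant rank function is the restriction of $\rk_\V$ to $2^A$, and its Dilworth truncation is the restriction of $\rk_\V^*$. By Proposition \ref{prp:Betti},
$$\beta_i(J') = \sum_{x \in P(\V_A)^*} \binom{|x|}{i},$$
so $\operatorname{pd}(J') = \max\{ |x| : x \in P(\V_A)^*\}$. The key fact needed here is that the Dilworth truncation $\rk^*_\V$ is monotone, normalized and submodular. Given that, the standard polymatroid greedy argument shows that the maximum of $|x|$ over the polymatroid is exactly $\rk^*_\V(A)$. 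Hence $\operatorname{pd}(J') = \rk^*_\V(A)$.

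\emph{Step 4: conclusion.} The trivial partition of $A$ gives $\rk^*_\V(A) \le \rk_\V(A) - 1 = r-1$. So $\operatorname{pd}(J') = r-1$ holds precisely when $\rk^*_\V(A) = \rk_\V(A) - 1$, which is condition (ii).

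\emph{Main obstacle.} I expect the main difficulty to be the combinatorial input in Step 3: the submodularity of the Dilworth truncation, and the identification of $\max|x|$ over $P(\V)^*$ with $\rk^*_\V(A)$. I would handle this via Fujishige's theory of Dilworth truncations. A secondary point needing care is the bookkeeping in Step 1, namely that it is condition (i) which makes $A$, rather than its closure, the correct index set.
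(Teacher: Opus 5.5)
The paper gives no proof of this statement; it quotes it from \cite{conca2019resolution}, so there is no internal argument to compare yours with. Your proof is correct, and it derives the result directly from the Betti number formula of Proposition \ref{prp:Betti}:
\begin{itemize}
\item localizing at $P_A$ discards the factors outside the closure of $A$;
\item the flat polynomial extension $S' \to S$ turns $P_A \in \Ass(S/J_A)$ into $\operatorname{depth}(S'/J')=0$;
\item Auslander--Buchsbaum turns this into $\operatorname{pd}(J') = {\rk}_{\V}(A)-1$;
\item Proposition \ref{prp:Betti}, applied to the subarrangement $(V_i)_{i\in A}$, gives $\operatorname{pd}(J')=\max\{|x| : x \in P(\V_A)^*\}$. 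Its Dilworth truncation is the restriction of ${\rk}^*_{\V}$, because partitions of $B\subseteq A$ only involve subsets of $B$.
\end{itemize}

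Only one direction needs the combinatorial input you flag. The constraint for $B=A$ gives $|x|\le {\rk}^*_{\V}(A)$ for free, so ${\rk}^*_{\V}(A)<{\rk}_{\V}(A)-1$ already forces $P_A\notin\Ass(S/J)$. For the converse you need a lattice point with $|x|={\rk}^*_{\V}(A)$. The greedy vector provides one once ${\rk}^*_{\V}$ is known to be a polymatroid rank function. That is the classical fact (Lov\'asz; see \cite{Fujishige}) that the Dilworth truncation of an intersecting-submodular function is submodular; here the function is ${\rk}_{\V}-1$ on nonempty sets. It is proved by uncrossing the union of optimal partitions of $A$ and $B$ into a partition of $A\cup B$ together with a partition of $A\cap B$. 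Monotonicity and nonnegativity, which make the greedy vector lie in $\NN^A$, use $V_i\neq 0$, so that every block contributes ${\rk}_{\V}(A_i)-1\ge 0$.

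Your treatment of condition (i) is a necessary correction of the literal statement, not just bookkeeping. For non-closed $A$ the ``only if'' direction fails. For example, with $k=2$ and $V_1=V_2$ we get $J=I_1^2$, so $P_{\{1\}}=I_1\in\Ass(S/J)$ even though $\{1\}$ violates (i). What is true, and what your Steps 1--4 prove, is that $\Ass(S/J)$ consists exactly of the primes $P_A$ with $A$ closed and satisfying (ii). This is also the form in which the paper uses the proposition in Lemmas \ref{lem:Jc-primary-decomposition} and \ref{lem:J0-primary-decomposition}.
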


\begin{remark} \label{rem:irredundant-primary-decomposition}
Combined, Propositions \ref{prp:primary decomposition} and \ref{prp:associated primes} provide an irredundant primary decomposition for the product ideal $J$.
\end{remark}

When the subspace arrangement $\V$ is \emph{linearly general}, that is when for every $A \subseteq [k]$ we have 
\begin{align*}
\dim_{\KK} \sum_{i \in A} V_i = \min\left\{n+1, \sum_{i \in A} \dim_{\KK} V_i \right\},
\end{align*} the primary decomposition of the product ideal $J$ becomes particularly simple:

\begin{proposition}[Proposition 3.4 in \cite{conca2003castelnuovo}] \label{prp:primary-decomposition-linearly-general}
If the $V_i, \, i \in [k],$ form a linearly general subspace arrangement, then a primary decomposition of the product ideal $J$ is given by the following formula:
\begin{align*}
J = I_1\cdots I_k = I_1 \cap \cdots \cap I_k \cap \mathfrak{m}^k.
\end{align*}
\end{proposition}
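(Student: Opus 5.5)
We would derive the statement directly from the possibly redundant primary decomposition of Proposition \ref{prp:primary decomposition}. We show that, under linear generality, every component indexed by a subset $A$ with $|A|\ge 2$ contains $I_1\cap\cdots\cap I_k\cap\mf^k$. One inclusion is immediate: $J=I_1\cdots I_k$ lies in each $I_i$ and in $\mf^k$, since each $I_i$ is generated in degree $1$. Hence $J\subseteq I_1\cap\cdots\cap I_k\cap \mf^k$. For the reverse inclusion, Proposition \ref{prp:primary decomposition} gives
$$J=\bigcap_{\emptyset\ne A\subseteq[k]}\Bigl(\sum_{i\in A} I_i\Bigr)^{|A|}.$$
It therefore suffices to prove that for every non-empty $A\subseteq[k]$ we have $I_1\cap\cdots\cap I_k\cap\mf^k\subseteq \bigl(\sum_{i\in A}I_i\bigr)^{|A|}$.

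Fix $A$. By linear generality, $\dim_{\KK}\sum_{i\in A}V_i=\min\{n+1,\sum_{i\in A}\dim_{\KK}V_i\}$, which leaves two cases. In the first case $\sum_{i\in A}V_i=[S]_1$. Then $\sum_{i\in A}I_i=\mf$ and the component is $\mf^{|A|}\supseteq\mf^k$, so the required inclusion holds trivially. In the second case $\dim_{\KK}\sum_{i\in A}V_i=\sum_{i\in A}\dim_{\KK}V_i\le n+1$, so the sum $\sum_{i\in A}V_i$ is direct. We then choose a basis of $[S]_1$ that extends the union of bases of the $V_i$, $i\in A$, and make the corresponding linear change of coordinates. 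After this change, each $I_i$ with $i\in A$ is generated by a block of variables, and these blocks are pairwise disjoint.

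For monomial ideals generated in pairwise disjoint sets of variables, the intersection equals the product. Indeed, the intersection is generated by the least common multiples of one generator from each ideal, and these least common multiples are just the products because the variables are disjoint. Consequently
$$\bigcap_{i\in A}I_i=\prod_{i\in A}I_i\subseteq\Bigl(\sum_{i\in A}I_i\Bigr)^{|A|},$$
and in particular $I_1\cap\cdots\cap I_k\cap\mf^k\subseteq\bigl(\sum_{i\in A}I_i\bigr)^{|A|}$. When $|A|=1$ this is just the component $I_i$ itself. Intersecting over all $A$ then yields $I_1\cap\cdots\cap I_k\cap\mf^k\subseteq J$, which completes the argument.

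The only point requiring care is the dichotomy supplied by linear generality. Every $A$ either spans all of $[S]_1$ or is in direct sum, so no intermediate case arises in which $\sum_{i\in A}I_i$ is a proper linear ideal while the $V_i$ still overlap. That intermediate case is exactly where the monomial argument of the second case would break down.
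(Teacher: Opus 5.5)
Your proof is correct: the inclusion $J\subseteq I_1\cap\cdots\cap I_k\cap\mf^k$ is immediate, and for the reverse inclusion linear generality forces each nonempty $A$ into one of two cases, either $\sum_{i\in A}I_i=\mf$ or the direct-sum case where $\bigcap_{i\in A}I_i=\prod_{i\in A}I_i\subseteq\bigl(\sum_{i\in A}I_i\bigr)^{|A|}$. The paper gives no proof of its own here (it only cites Conca--Herzog), and your reduction to Proposition~\ref{prp:primary decomposition} is the natural argument; the one point you leave implicit is that the components really are primary, which is immediate since each $I_i$ is prime and $\mf^k$ is $\mf$-primary.
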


\subsection{Star configurations} \label{subsection:star-configurations}

Let $f_1,\dots,f_j$ be linear forms of $S$, such that the arrangement of dimension $1$ linear subspaces $\Span(f_i)$ with $i \in [j]$ of $[S]_1$ is linearly general. With $c \in [j]$ a positive integer, these linear forms induce a special subspace arrangement of $[S]_1$, where each subspace corresponds to a $c$-tuple of integers $1 \le \alpha_1 < \cdots < \alpha_c \le j$ and is given by $\Span(f_{\alpha_1},\dots,f_{\alpha_c})$. The projective scheme in $\PP^n$ defined by the ideal
\begin{align*}
A_{c,j} = \bigcap_{1 \le \alpha_1 < \cdots < \alpha_c \le j} (f_{\alpha_1},\dots,f_{\alpha_c}) 
\end{align*} is referred to as a codimension-$c$ \emph{star configuration} of $j$ hyperplanes. 

\begin{proposition}[Proposition 2.9 in \cite{StarConfigurations}] \label{prp:star-configurations}
The ideal $A_{c,j}$ of a star configuration satisfies the following properties:
\begin{enumerate}[label=(\roman*)]
\item $S/A_{c,j}$ is Cohen-Macaulay of dimension $n+1-c$.
\item $\reg(A_{c,j}) = j-c+1$.
\item $A_{c,j}$ is generated by all products $f_{\alpha_1}\cdots f_{\alpha_{j-c+1}}$ for $1 \le \alpha_1 < \cdots < \alpha_{j-c+1} \le j$.
\item The $h$-vector of $S/A_{c,j}$ is 
\begin{align*}
\left(1, \, {c \choose c-1}, \, {c+1 \choose c-1}, \dots, {j-1 \choose c-1} \right).
\end{align*}
\end{enumerate}
\end{proposition}

\section{Proof of Theorem \ref{thm:main}} \label{section:Proof-Main}

The proof of Theorem \ref{thm:main} follows from the synthesis of several ingredients that we next develop. 

\subsection{Reduction to codimension $2$ and $k=\binom{j+1}{2}$}

We may write $I_X = \cap_{i \in [k]} I_i$, where $I_i$ is an ideal of $S$ generated by at least two generic linear forms. Pick any two generic linear forms in $I_i$ and let $I_i'$ be the ideal generated by them. Inductive application of Proposition \ref{prp:Lc vs L(c+1)} furnishes 
$$ \reg(I_X) \le  \reg\left(\cap_{i \in [k]} I_i' \right).$$
Set $k' = {j +1 \choose 2}$. Then Proposition \ref{prp:J vs J cap Lc} implies that 
\begin{align*} \reg\left(\cap_{i \in [k]} I_i' \right) \le \reg\left(I_{X'}\right) \end{align*}
where $X'$ is the union of $k'$ codimension-$2$ generic linear spaces in $\PP^n$. Combining the above two inequalities, we see that it suffices to prove the theorem for $X'$. Thus the rest of the proof is devoted in proving:

\begin{theorem} \label{thm:codimension2}
Let $X$ be the reduced union of ${j+1 \choose 2}$ generic linear spaces of codimension $2$ in $\PP^n$. Then $\reg(I_X) \le (n-1)j $.
\end{theorem}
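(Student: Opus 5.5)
The plan is to prove Theorem \ref{thm:codimension2} by a flat degeneration to a configuration whose ideal can be controlled explicitly. Upper semicontinuity of regularity in flat families will then transfer the bound back to the generic union. The keywords and the number ${j+1 \choose 2}$ suggest the model: take $j$ generic hyperplanes $f_1,\dots,f_j$. The pairwise intersections $(f_\alpha,f_\beta)$, $\alpha<\beta$, form exactly ${j+1 \choose 2}-j = {j \choose 2}$ codimension-$2$ spaces, namely the star configuration $A_{2,j}$. That leaves $j$ more spaces to place. So we aim to degenerate the ${j+1\choose 2}$ generic codimension-$2$ spaces into a union of two pieces. The first piece is a star-like configuration built from hyperplanes, possibly $n-1$ families of them. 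The second piece is $j$ further spaces, say of the form $(f_\alpha, g_\alpha)$ with $g_\alpha$ generic, so that each hyperplane $f_\alpha$ contains $j$ of the spaces. Since the spaces are generic, moving them into special position gives a flat limit scheme $Y$. We would need $\reg(I_X)\le \reg(I_Y)$, or more precisely that the limit of $I_X$ in the family contains the saturated ideal of the limit with the same Hilbert polynomial; the standard semicontinuity statement gives $\reg$ of the generic fibre $\le$ $\reg$ of the flat limit ideal.

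First I will fix the degenerate configuration and compute its ideal. The natural way to get the factor $n-1$ is to work with products of ideals, using Propositions \ref{prp:Betti}, \ref{prp:primary-decomposition-linearly-general} and \ref{prp:star-configurations}. For a linearly general arrangement, a product $I_1\cdots I_m = I_1\cap\cdots\cap I_m\cap\mathfrak m^m$ has a linear resolution, so its regularity is $m$. The comparison $\reg(J)\le\reg(J\cap L_c)$, namely Proposition \ref{prp:J vs J cap Lc}, lets me add components at will. So the target is to produce an ideal $Q$ that satisfies three conditions: it is a product or a saturated intersection of generic linear ideals; its saturation is the ideal of a degeneration $Y$ of $X$; and its regularity is at most $(n-1)j$. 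A candidate is $Q=\prod_{\alpha\in[j]} P_\alpha$, where $P_\alpha$ is generated by $n-1$ linear forms, one of them being $f_\alpha$. Then $\reg Q\le (n-1)j$ follows by Derksen–Sidman-type counting: $Q$ is a product of $j$ linear ideals, and by Proposition \ref{prp:Betti} the resolution is linear as long as the arrangement's polymatroid behaves well. The primary decomposition of Proposition \ref{prp:primary decomposition} then shows which codimension-$2$ components (sums $P_\alpha+P_\beta$ having codimension $2$ in suitable coordinates) appear. The plan is to arrange that these components are exactly ${j+1\choose 2}$ codimension-$2$ spaces and that every embedded component is $\mathfrak m$-primary, so that $Q^{\sat}=I_Y$ and $\reg(I_Y)\le \reg(Q)$.

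Next I would verify that $Y$ really is a flat limit of generic unions. This means checking that the Hilbert function of $I_Y$ in large degrees equals that of ${j+1\choose 2}$ generic codimension-$2$ spaces; the intersections among components must be counted correctly, and the star configuration $h$-vector of Proposition \ref{prp:star-configurations}(iv) can be used for that. An alternative is to argue directly that $I_X$ specializes into a subideal whose saturation has degree-wise dimension at least that of $I_Y$.

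The main obstacle is exactly this middle step: designing the degenerate arrangement so that the product ideal's associated primes, read off via Proposition \ref{prp:associated primes} and the Dilworth truncation, are precisely the desired ${j+1\choose 2}$ codimension-$2$ spaces plus $\mathfrak m$, with no other components, while the regularity stays at $(n-1)j$ rather than ${j+1\choose 2}$. If the product approach fails, the fallback is induction on $j$. Going from $j-1$ to $j$ adds $j$ spaces, all lying in a new hyperplane $f_j$, and we would use the exact sequence $0\to I_{X}\cap (f_j)\to I_X\to \cdots$ together with residual (Castelnuovo) arguments with respect to $f_j$. The aim there is to show that each step raises the regularity by at most $n-1$; the restriction to $f_j\cong\PP^{n-1}$ would be handled by Derksen–Sidman-style bounds.
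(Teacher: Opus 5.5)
Your proposal has a genuine gap. You never specify a degeneration whose special fibre you can identify, and the kind of special fibre you aim for cannot occur.

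The main candidate fails outright. By Proposition \ref{prp:primary decomposition}, every associated prime of $\prod_{\alpha}P_\alpha$ is a sum $\sum_{\alpha\in A}P_\alpha$, of height at least $n-1$. So this ideal has at most $j$ codimension-$2$ components (none if $n\ge 4$), and it cannot be the limit ideal of $\binom{j+1}{2}$ codimension-$2$ spaces.

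More fundamentally, your requirement that the limit have only $\mathfrak m$-primary embedded components is unattainable for $n\ge 3$. In any configuration of the type you describe (the star configuration $A_{2,j}$ plus the spaces $(f_\alpha,g_\alpha)$, or $j$ spaces inside one hyperplane), components meet in codimension $3$. Generic codimension-$2$ spaces meet in codimension $4$ or not at all. Hence the reduced union $Y$ has a different Hilbert polynomial, and the flat limit is a non-reduced scheme supported on $Y$ whose saturated ideal is strictly contained in $I_Y$. Already in $\PP^3$, two skew lines becoming coplanar have limit ideal $(x_0^2,x_0\ell_1,x_0\ell_2,\ell_1\ell_2)$, which has an embedded point. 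Semicontinuity, as in Lemma \ref{lem:reg-c<=reg-0}, compares $\reg(I_X)$ only with the regularity of that limit ideal. A bound on $\reg(I_Y)$ therefore gives no information about $\reg(I_X)$.

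Your fallback is the skeleton of the paper's proof: move $j$ of the spaces into a hyperplane $H=\{x_0=0\}$, so that the residual consists of $\binom{j}{2}$ generic spaces, and induct on $j$. But each substantive step is missing.

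\textbf{The flat limit.} The paper deforms $(\ell_i,\ell_i')$ to $(\ell_i,x_0+t\ell_i')$ for $i\in[j]$. It proves flatness for the family of \emph{product} ideals $J_{\xi,t}$ by showing that the degenerate arrangement has the same Dilworth truncation as the generic one (Lemmas \ref{lem:rank*-Vc}, \ref{lem:rank*-V0} and Proposition \ref{prp:Betti}). It then reads off from the associated-prime criterion that the saturated limit is $Q=\bigcap_{c\in[n-1]}B_{c,j}^{(c)}\cap\bigcap_{i>j}(f_i,f_i')$. This ideal carries the $c$-th symbolic power of the codimension-$c$ star configuration inside $H$ for every $c\le n-1$.

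\textbf{The factor $n-1$.} This embedded structure is exactly where the factor $n-1$ comes from. One uses $n-1$ successive residual sequences for $Q:x_0^c$, ending with $Q:x_0^{n-1}=\bigcap_{i>j}(f_i,f_i')$. Its regularity is at most $(n-1)(j-1)$ by induction on $j$, and it enters shifted by $n-1$. A single Castelnuovo step does not explain the factor.

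\textbf{The traces.} The saturated traces are $\overline{B}_{c+1,j}\cap\overline{I_{j+1}}\cap\cdots\cap\overline{I_k}$ (Lemma \ref{lem:trace of residual}). They involve $\binom{j}{2}$ subspaces of $H\cong\PP^{n-1}$, so Derksen--Sidman counting gives a bound quadratic in $j$ and destroys the result. The paper instead bounds them by $(j-c)+(n-2)(j-1)$ (Lemma \ref{lem:reg-Q}), combining a finite-length $\Tor_1$ argument with the theorem itself in $\PP^{n-1}$.

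So one needs a double induction on $n$ and $j$, with base cases $n=2$ (Lemma \ref{lem:regularity of k points}) and $j\le 2n-3$ (Derksen--Sidman). Your plan does not set this up.
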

 
\subsection{Induction hypothesis} \label{subsection:induction-hypothesis}

We prove Theorem \ref{thm:codimension2} by induction on $n \ge 2$. The base of the induction corresponds to $n=2$, and is taken care of by the next lemma (showing that in fact the bound is tight).

\begin{lemma} \label{lem:regularity of k points}
The ideal of $k = {j+1 \choose 2}$ generic points in $\PP_{\KK}^2$ has regularity $j$ and a linear resolution.
\end{lemma}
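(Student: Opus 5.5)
The plan is to exploit the fact that $k=\binom{j+1}{2}=\dim_\KK [S]_{j-1}$ for $S=\KK[x_0,x_1,x_2]$. Generic points impose independent conditions on forms of every degree, so the Hilbert function of $S/I_X$ is $\HF(d)=\min\{\binom{d+2}{2},k\}$. In particular $[I_X]_d=0$ for $d\le j-1$, and $\dim [I_X]_j=\binom{j+2}{2}-\binom{j+1}{2}=j+1$. Independence of conditions for generic points holds over an infinite field: one chooses the points one at a time outside the zero locus of a suitable maximal minor of the evaluation matrix.

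\emph{Regularity.} The ring $S/I_X$ is one-dimensional and Cohen–Macaulay, since $I_X$ is saturated of dimension one. Its regularity is therefore the smallest $r$ with $\HF(d)=k$ for all $d\ge r$. Here $\HF(j-1)=\binom{j+1}{2}=k$ while $\HF(j-2)<k$, so $\reg(S/I_X)=j-1$ and $\reg(I_X)=j$. Concretely, after modding out by a general linear form $\ell$, the Artinian reduction $S/(I_X,\ell)$ has $h$-vector $(1,2,\dots,j)$, which ends in degree $j-1$.

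\emph{Linear resolution.} Since $I_X$ has no elements of degree below $j$ and $\reg(I_X)=j$, every minimal generator lies in degree exactly $j$. More generally, the $i$-th syzygies of $I_X$ sit in degrees between $j+i$, because the generator degrees are at least $j$, and $\reg(I_X)+i=j+i$. Hence the resolution is linear. By Hilbert–Burch it has the form
$$0\to S(-j-1)^{j}\to S(-j)^{j+1}\to I_X\to 0.$$
As a check, this numerology agrees with Proposition \ref{prp:star-configurations}(iv) for $c=2$: the $h$-vector $(1,2,\dots,j)$ is that of a codimension-$2$ star configuration of $j+1$ lines. So one could instead degenerate to that star configuration and use semicontinuity of Betti numbers, but the direct argument suffices.

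\emph{Main obstacle.} The only step that is not formal is justifying the maximal Hilbert function $\min\{\binom{d+2}{2},k\}$ for generic points. Everything else follows from the standard fact that, for a one-dimensional Cohen–Macaulay quotient, regularity is read off from the $h$-vector.
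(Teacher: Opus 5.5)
Your proposal is correct and follows essentially the same route as the paper. The maximal Hilbert function $\min\{\binom{d+2}{2},k\}$ of generic points gives $[I_X]_d=0$ for $d\le j-1$ and stabilization exactly at $d=j-1$. Since $S/I_X$ is saturated of dimension one, Serre's formula (which you phrase as the Cohen--Macaulay/$h$-vector criterion) identifies $\reg(S/I_X)$ with that stabilization index, so $\reg(I_X)=j$, and linearity then follows from generation in degree $j$. Your sketch of why generic points impose independent conditions and the explicit Hilbert--Burch shape $0\to S(-j-1)^{j}\to S(-j)^{j+1}\to I_X\to 0$ are correct additions that the paper does not spell out.
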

\begin{proof}
The Hilbert function of the ideal $I_X$ of $k = {j+1 \choose 2}$ generic points in $\PP_{\KK}^2$ is known to be 
\begin{align*}
\HF(I_X,d) = \max \left\{0, {d + 2 \choose 2} - {j+1 \choose 2} \right\}.
\end{align*} From this formula we read that $I_X$ vanishes at degrees $\le j-1$, and that the regularity of the Hilbert function of $I_X$ (that is, the smallest degree from which the Hilbert function agrees with the Hilbert polynomial thereafter) is
\begin{align*}
{\reg}_{\text{HF}} (I_X) = j-1.
\end{align*} On the other hand, as $I_X$ is saturated and $S/I_X$ has Krull dimension $1$, Serre's formula reads
\begin{align*}
\HF(S/I_X,d) - P_{S/I_X}(d) = - \dim_{\KK} \left[H_m^1(S/I_X) \right]_d,
\end{align*} where $P_{S/I_X}$ is the Hilbert polynomial of $S/I_X$. We infer that 
\begin{align*}
{\reg}_{\text{HF}} (S/I_X) = \reg(S/I_X).
\end{align*} As the regularity of the Hilbert function of $S/I_X$ agrees with that of $I_X$, we conclude that 
\begin{align*}
\reg(I_X) = {\reg}_{\text{HF}} (I_X)+1 = j.
\end{align*} As $I_X$ has regularity $j$ and vanishes at degrees less than $j$, it must have a linear resolution.
\end{proof}

For $n \ge 3$, we induct on $j$. If $j \le 2n-3$, then 
\begin{align*}
k = {j+1 \choose 2}  = \frac{j+1}{2} j \le \frac{2n-2}{2} j = (n-1)j,
\end{align*} and the statement follows from Proposition \ref{prp:Derksen-Sidman}. Consequently, we will hereafter assume that $j \ge 2n-2$ and $n \ge 3$, with the theorem being true for smaller values of $j$ and $n$.  

\subsection{The product ideal $J_{\xi,t}$}

Recall $S = \KK[x_0,\dots,x_n]$. For integers $0 \le p \le n $ and $1 \le q \le k$, we let $\xi_{pq}, \, \xi_{pq}'$ be algebraically independent elements over the field of fractions of $S$, and define a field
\begin{align*}
\EE = \left(\KK(\xi_{pq}, \, \xi_{pq}': \, 0 \le p \le n, \, 1 \le q \le k)\right)^a
\end{align*} as the algebraic closure of the field of rational functions in the $\xi_{pq}$'s and the $\xi_{pq}'$'s over $\KK$. We then define a new polynomial ring 
\begin{align*}
R = S \otimes_{\KK} \EE = \EE[x_0,\dots,x_n].
\end{align*} With $t$ transcendental over the field of fractions of $R$, we consider the extended polynomial ring $R[t]$; this is still graded with $\deg(x_i) = 1$, $\deg(t) = 0$ and $\deg(\xi) = 0$ for every $\xi \in \EE$. Inside $R[t]$ we define linear forms 
\begin{align*}
\ell_i & = \xi_{i0} x_0 + \xi_{i1} x_1 + \cdots + \xi_{in} x_n \\
\ell_i' & = \xi_{i0}' x_0 + \xi_{i1}' x_1 + \cdots + \xi_{in}' x_n.
\end{align*} With $j \in [k]$, a central object in the proof will be the ideal of $R[t]$ \begin{align*}
J_{\xi,t} = (\ell_1,x_0+t\ell_1')\cdots (\ell_j,x_0+t\ell_j')(\ell_{j+1},\ell_{j+1}')\cdots(\ell_k,\ell_k').
\end{align*}

\subsection{The subspace arrangement $V_{\xi,t}$}

With $\EE(t)$ the field of rational functions in $t$ over $\EE$, we consider 
the localization of $R[t]$ 
\begin{align*}
R(t) = R[t] \otimes_\EE \EE(t) = \EE(t)[x_0,\dots,x_n],
 \end{align*} which is a standard graded polynomial ring over the field $\EE(t)$. Inside the $(n+1)$-dimensional $\EE(t)$-vector space of linear forms of $R(t)$, we consider the subspace arrangement 
\begin{align*}
\V_{\xi, t}=\big(&\Span(\ell_1,x_0+t\ell_1'),\dots,\Span(\ell_j,x_0+t\ell_j'), \\
&\Span(\ell_{j+1},\ell_{j+1}'), \dots, \Span(\ell_k,\ell_k') \big).
\end{align*} 

\begin{lemma} \label{lem:rank*-Vc}
The subspace arrangement $\V_{\xi,t}$ is linearly general, and for every $A \subseteq [k]$ $${\rk}_{\V_{\xi,t}}^*(A) = \min\{|A|,n\}.$$
\end{lemma}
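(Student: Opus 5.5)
Writing $V_i$ for the $i$-th member of $\V_{\xi,t}$ (so $\dim V_i = 2$ for every $i$), the proof has two parts: first show linear generality, then compute the Dilworth truncation directly from its definition.

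\textbf{Linear generality.} We must show that for every $A \subseteq [k]$ the $2|A|$ spanning vectors of $\sum_{i\in A} V_i$ have rank $\min\{n+1, 2|A|\}$ over $\EE(t)$. The rank over $\EE(t)$ is at least the rank after any specialization of $t$. This holds because a nonvanishing minor of the coefficient matrix remains a nonzero polynomial in $t$; the entries lie in $\EE[t]$, so specialization makes sense. Now specialize $t=0$. Then the vectors become $\ell_i, x_0$ for $i \le j$ and $\ell_i,\ell_i'$ for $i>j$. If $A$ contains several indices $\le j$, this specialization repeats $x_0$ and loses rank, so $t=0$ is the wrong specialization.

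Instead, treat $t$ as transcendental and argue directly. The vectors $\ell_i,\ell_i'$ are generic, since their coefficients $\xi$ are algebraically independent over $\KK$, and $t$ is transcendental over $\EE$. The family $\{\ell_i, x_0 + t\ell_i'\}_{i \le j} \cup \{\ell_i,\ell_i'\}_{i>j}$ is the image of the generic family $\{\ell_i,\ell_i'\}$ under the substitution $\ell_i' \mapsto x_0 + t\ell_i'$ for $i\le j$. Consider the maximal minors as polynomials in $t$. Dividing by $t$ and letting $t\to\infty$, that is, taking the leading coefficient in $t$, recovers the corresponding minor of the generic vectors $\ell_i,\ell_i'$. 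That minor is nonzero because the $\xi$'s are algebraically independent, so the generic arrangement is linearly general. Hence every relevant maximal minor is a nonzero polynomial in $t$, and the rank is $\min\{n+1,2|A|\}$. Formally, we replace $x_0+t\ell_i'$ by the equivalent spanning vector $t^{-1}x_0+\ell_i'$ and specialize $s=t^{-1}$ to $0$.

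\textbf{Computing $\rk^*$.} By linear generality, $\rk(B)=\min\{n+1,2|B|\}$ for every $B$. For a partition $A=\bigsqcup_{i\in[p]} A_i$ the quantity to minimize is $\sum_i (\min\{n+1,2|A_i|\}-1)$.

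For the upper bound, use the two partitions into singletons and into one block. The singleton partition gives $\sum_{a\in A}(2-1)=|A|$. The single-block partition gives $\min\{n+1,2|A|\}-1 \le n$. Hence $\rk^*(A)\le\min\{|A|,n\}$.

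For the lower bound, each block contributes $\min\{n,2|A_i|-1\} \ge \min\{n,|A_i|\}$, since $|A_i|\ge1$. If some block contributes $n$, the total is at least $n$. Otherwise every block contributes $2|A_i|-1\ge |A_i|$, and the total is at least $|A|$. In either case the total is at least $\min\{|A|,n\}$.

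The only delicate step is the linear generality argument: specialization must be done in the right variable ($s=t^{-1}\to 0$, not $t\to 0$), and one must invoke genericity of the $\xi$'s to guarantee that the specialized minors are nonzero.
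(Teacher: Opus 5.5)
Your proof is correct and follows the paper's route: linear generality comes from nonvanishing of the maximal minors, the upper bound from the singleton and one-block partitions, and the lower bound blockwise via $\min\{n,2|A_i|-1\}\ge\min\{n,|A_i|\}$. The only difference is how the minors are shown nonzero: the paper notes that the entries $\xi_{iv},\xi'_{iv},1+t\xi'_{i0},t\xi'_{iv}$ are algebraically independent over $\KK$, whereas you take the top coefficient in $t$ (equivalently, specialize $s=t^{-1}\to 0$), which reduces to a minor of the fully generic matrix; both arguments are valid.
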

\begin{proof}
For any $A \subseteq [k]$, write $A = A_{\le j} \sqcup A_{>j}$, where $A_{\le j}$ is the set of elements in $A$ less than or equal to $j$ and $A_{>j}$ the elements in $A$ bigger than $j$. We construct an $(n+1) \times (2|A|)$ matrix $M$ with elements in $\EE(t)$, by letting its columns contain the coefficients of the linear forms $\ell_i$ and $x_0 + t \ell_i'$ for $i \in A_{\le j}$ and $\ell_i$ and $\ell_i'$ for $i \in A_{>j}$. Note that the entries of the matrix $M$ are the elements (up to a reordering)
\begin{align*}
\xi_{iv} &:  \, \, \,  i \in A, \, 0 \le v \le n \\
 \xi_{iv}'&:  \, \, \, i \in A_{>j}, \, 0 \le v \le n \\
 1+t \xi_{i0}' &:  \, \, \, i \in A_{\le j} \\
 t \xi_{iv}' &:  \, \, \, i \in A_{\le j}, \,  1 \le v \le n.
\end{align*} Since the $\xi_{iv}$'s, $\xi_{iv}'$'s and $t$ are jointly algebraically independent over $\KK$, one easily checks that so are the elements listed above. Hence every maximal minor of $M$ is a non-zero polynomial of $$\KK \left[\{\xi_{iv}, \, \xi_{iv}'\}_{i \in [k], \, 0 \le v \le n}, t\right].$$ 

\noindent In particular, the rank of $M$ over the field $\EE(t)$ is $\min\{n+1,2|A|\}$. In other words,  
\begin{align*}
{\rk}_{\V_{\xi,t}}(A) = \min\{n+1,2|A|\};
\end{align*} that is, $\V_{\xi,t}$ is linearly general.

We first argue that ${\rk}_{\V_{\xi,t}}^*(A) \le \min\{|A|,n\}.$ To see this, we recall the definition of ${\rk}_{\V_{\xi,t}}^*(A)$, as the minimum value of 
\begin{align*}
\sum_{i \in [p]} {\rk}_{\V_{\xi,t}}(A_i)-p
\end{align*} over all partitions $A = \bigsqcup_{i \in [p]} A_i$ of $A$. Selecting the trivial partition with $p=1$ and $A_1 = A$, we have 
\begin{align*}
{\rk}_{\V_{\xi,t}}^*(A) \le {\rk}_{\V_{\xi,t}}(A)-1 \le (n+1)-1 = n.
\end{align*} On the other extreme, the partition of $A$ with $p = |A|$ gives 
\begin{align*}
{\rk}_{\V_{\xi,t}}^*(A) \le \sum_{i \in [|A|]} {\rk}_{\V_{\xi,t}}(A_i)-|A| = 2|A|-|A|=|A|,
\end{align*} because each $A_i$ is a singleton and the subspaces are $2$-dimensional.

To show that we in fact have equality, note that if ${\rk}_{\V_{\xi,t}}^*(A) \ge n$, then by what we just proved it must be that ${\rk}_{\V_{\xi,t}}^*(A) = n$; hence we may assume ${\rk}_{\V_{\xi,t}}^*(A) < n$, that is 
\begin{align*}
\sum_{i \in [p]} {\rk}_{\V_{\xi,t}}(A_i) - p < n,
\end{align*} where $A  = \bigsqcup_{i \in [p]} A_i$ is a partition of $A$ that computes ${\rk}_{\V_{\xi,t}}^*(A)$. In particular $\rk_{\V_{\xi,t}}(A_i) \le n$ for every $i \in [p]$, so from the formula in the first paragraph of the proof, we obtain 
\begin{align*}
{\rk}_{\V_{\xi,t}}^*(A) = \sum_{i \in [p]} {\rk}_{\V_{\xi,t}}(A_i) - p  = \sum_{i \in [p]} 2|A_i| - p=2|A|-p \ge |A|.
\end{align*} As we have already shown ${\rk}_{\V_{\xi,t}}^*(A) \le |A|$, equality must hold.
\end{proof}

\subsection{The product ideal $J_{\xi,t} R(t)$}

Denote by $J_{\xi,t} R(t)$ the ideal of $R(t)$ generated by $J_{\xi,t}$ in $R(t)$, this is the product ideal associated to the subspace arrangement $\V_{\xi,t}$. By $\mf_{R(t)}$ we indicate the maximal homogeneous ideal of $R(t)$.

\begin{lemma} \label{lem:Jc-primary-decomposition}
Suppose $k\ge n$. An irredundant primary decomposition of $J_{\xi,t} R(t)$ is 
\begin{align*}
J_{\xi,t}R(t) =& (\ell_1,x_0+t\ell_1') \cap \cdots \cap (\ell_j,x_0+t\ell_j') \cap \\
&(\ell_{j+1},\ell_{j+1}') \cap \cdots \cap (\ell_{k},\ell_{k}') \cap \mathfrak{m}_{R(t)}^k.
\end{align*}
\end{lemma}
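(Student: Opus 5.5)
The plan is to obtain the decomposition directly from Proposition \ref{prp:primary-decomposition-linearly-general}, and then to establish irredundancy with Proposition \ref{prp:associated primes}, using the rank computations of Lemma \ref{lem:rank*-Vc}. Over the field $\EE(t)$ the ideal $J_{\xi,t}R(t)$ is the product of the ideals generated by the subspaces of $\V_{\xi,t}$. By Lemma \ref{lem:rank*-Vc} this arrangement is linearly general. Proposition \ref{prp:primary-decomposition-linearly-general}, which is valid over any field and in particular over $\EE(t)$, then gives at once
\begin{align*}
J_{\xi,t}R(t) = \bigcap_{i\in[j]} (\ell_i,x_0+t\ell_i') \cap \bigcap_{i=j+1}^{k}(\ell_i,\ell_i') \cap \mf_{R(t)}^k .
\end{align*}
Each ideal $(\ell_i,x_0+t\ell_i')$ or $(\ell_i,\ell_i')$ is a prime generated by two independent linear forms. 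The ideal $\mf_{R(t)}^k$ is $\mf_{R(t)}$-primary. So this is a primary decomposition.

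For irredundancy, the components have pairwise distinct radicals: the codimension-$2$ primes are distinct because the arrangement is linearly general, so any two of the planes span a space of dimension $\min\{n+1,4\}\ge 4>2$ (recall $n\ge 3$ in the ongoing induction; for $n=2$ the sum is still $3>2$). It therefore suffices to show that every radical is an associated prime of $R(t)/J_{\xi,t}R(t)$.

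For a singleton $A=\{i\}$ we check the two conditions of Proposition \ref{prp:associated primes}. For (i), any $B\supsetneq\{i\}$ has $\rk(B)=\min\{n+1,2|B|\}\ge 3>2$. For (ii), $\rk^*(\{i\})=\min\{1,n\}=1=\rk(\{i\})-1$. Hence each codimension-$2$ prime is associated.

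For the maximal ideal we take $A=[k]$, whose sum of ideals is $\mf_{R(t)}$ because $\rk([k])=\min\{n+1,2k\}=n+1$. Condition (i) holds vacuously, since there is no proper superset. For condition (ii) we use $k\ge n$ in Lemma \ref{lem:rank*-Vc}, which gives $\rk^*([k])=\min\{k,n\}=n=\rk([k])-1$. Thus $\mf_{R(t)}$ is associated, so the component $\mf_{R(t)}^k$ cannot be omitted. This is exactly where the hypothesis $k\ge n$ enters.

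The main obstacle is to make sure that Propositions \ref{prp:associated primes} and \ref{prp:primary-decomposition-linearly-general} apply over $\EE(t)$ with arbitrary subspaces of $[R(t)]_1$. Both statements are characteristic-free linear-algebraic facts about product ideals of linear ideals over an infinite field, so this should be harmless. A secondary point is that irredundancy is meant as "no component can be dropped and the radicals are distinct." This follows because the associated primes of the ideal are exactly the radicals of the listed components.
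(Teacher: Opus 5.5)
Your proposal is correct and follows the paper's proof. The decomposition comes from Proposition \ref{prp:primary-decomposition-linearly-general} via the linear generality in Lemma \ref{lem:rank*-Vc}. The one substantive point, that $\mf_{R(t)}$ is associated, is obtained exactly as in the paper: apply Proposition \ref{prp:associated primes} to $A=[k]$, where $k\ge n$ gives ${\rk}^*_{\V_{\xi,t}}([k])=n={\rk}_{\V_{\xi,t}}([k])-1$. The only cosmetic difference is that the paper dispatches the codimension-$2$ components by observing that they are minimal primes, whereas you verify them through the singleton case of Proposition \ref{prp:associated primes}.
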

\begin{proof}
As the subspace arrangement $\V_{\xi,t}$ is linearly general, the formula read in the statement is a primary decomposition of $J_{\xi,t}$ by Proposition \ref{prp:primary-decomposition-linearly-general}. As all $(\ell_i,x_0+t\ell_i')$'s for $i \in [j]$ and all $(\ell_i,\ell_i')$'s for $j+1 \le i \le k$ are minimal primes over $J_{\xi,t}R(t)$, it remains to prove that $\mathfrak{m}_{R(t)} \in \Ass(R(t)/J_{\xi,t}R(t))$. As by hypothesis $k \ge n$, and the $n+1$ linear forms $\ell_1,\dots,\ell_n, x_0+t\ell_1'$ are linearly independent over $\EE(t)$ (use the same argument as in the proof of Lemma \ref{lem:rank*-Vc}), $\mathfrak{m}_{R(t)}$ is the sum of all the ideals associated to the subspace arrangement $\V_{\xi,t}$. Hence by Proposition \ref{prp:associated primes}, it suffices to prove 
\begin{align*}
{\rk}_{\V_{\xi,t}}^*([k]) = {\rk}_{\V_{\xi,t}}([k]) -1.
\end{align*} As $k \ge n$, Lemma \ref{lem:rank*-Vc} gives 
\begin{align*}
{\rk}_{\V_{\xi,t}}^*([k]) = \min\{|[k]|,n\} = n,
\end{align*} and the proof is concluded because ${\rk}_{\V_{\xi,t}}([k]) = n+1$.
\end{proof}

\subsection{The subspace arrangement $\V_{\xi,0}$}

Inside the $n+1$ dimensional $\EE$-vector space of linear forms of $R=\EE[x_0,\dots,x_n]$, we define the subspace arrangement
\begin{align*}
\V_{\xi,0}=\big(\Span(x_0,\ell_1),\dots,\Span(x_0,\ell_j),\, \Span(\ell_{j+1},\ell_{j+1}'), \dots, \Span(\ell_k,\ell_k') \big).
\end{align*} 

As $\V_{\xi,0}$ is certainly not linearly general, it is a remarkable fact, and a crucial ingredient in the proof of Theorem \ref{thm:main}, that the rank function $\rk_{\V_{\xi,0}}^*$ coincides with $\rk_{\V_{\xi,c}}^*$. 

\begin{lemma} \label{lem:rank*-V0}
For any $A \subseteq [k]$ we have that $${\rk}_{\V_{\xi,0}}^*(A) = \min\{|A|,n\}.$$
\end{lemma}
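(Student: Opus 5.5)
We first compute $\rk_{\V_{\xi,0}}(A)$ for every $A\subseteq[k]$, and then bound $\rk^*_{\V_{\xi,0}}(A)$ from above and below.

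\emph{Step 1: the rank function.} Write $A = A_{\le j}\sqcup A_{>j}$ as in the proof of Lemma \ref{lem:rank*-Vc}, and set $a=|A_{\le j}|$, $b=|A_{>j}|$. If $a\ge 1$, the span $\sum_{i\in A}V_i$ is spanned by $x_0$, the $\ell_i$ with $i\in A$, and the $\ell_i'$ with $i\in A_{>j}$. That is $1+a+2b$ forms. Apart from $x_0$, their coefficients are algebraically independent indeterminates, so the same minor argument as in Lemma \ref{lem:rank*-Vc} gives
\begin{align*}
{\rk}_{\V_{\xi,0}}(A)=\min\{n+1,\,1+a+2b\}\quad (a\ge 1), \qquad {\rk}_{\V_{\xi,0}}(A)=\min\{n+1,\,2b\}\quad (a=0).
\end{align*}
Concretely, one adjoins the column $e_0$ to a generic $(n+1)\times(a+2b)$ matrix; a suitable maximal minor is a nonzero polynomial. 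In particular every singleton has rank $2$.

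\emph{Step 2: upper bound.} The argument in the proof of Lemma \ref{lem:rank*-Vc} carries over verbatim. The partition of $A$ into singletons gives $\rk^*(A)\le 2|A|-|A|=|A|$, and the trivial partition gives $\rk^*(A)\le \rk(A)-1\le n$.

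\emph{Step 3: lower bound.} This is the main step. Take a partition $A=\bigsqcup_{i\in[p]}A_i$ computing $\rk^*(A)$, and suppose the value is $<n$. Then every block has rank $\le n$, so every block uses the non-truncated formula from Step 1. For a block $A_i$ with $a_i\ge 1$ the rank is $1+a_i+2b_i\ge |A_i|+1$, since $1+a_i\ge 1$; and when $a_i\ge 1$ with $a_i+b_i=|A_i|$, we get $1+a_i+2b_i-1=a_i+2b_i\ge |A_i|$. For a block with $a_i=0$ the rank is $2b_i\ge |A_i|+1$, because $b_i\ge 1$ for a nonempty block. Hence every block satisfies $\rk(A_i)-1\ge |A_i|$. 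Summing over the blocks gives $\rk^*(A)\ge |A|$, and combined with Step 2 this yields $\rk^*(A)=|A|=\min\{|A|,n\}$, since $|A|\le \rk^*(A)<n$.

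If instead the minimum is $\ge n$, Step 2 forces it to equal $n$. Since $|A|\ge \rk^*(A)\ge n$, we again get $\min\{|A|,n\}=n$.

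\emph{Main obstacle.} The only nontrivial point is the genericity claim in Step 1: that adjoining the fixed vector $x_0$ to generic vectors keeps the rank at $\min\{n+1,1+a+2b\}$. Once this holds, the inequality $\rk(A_i)-1\ge |A_i|$ for blocks of rank at most $n$ is the key observation, and it goes through because each shared $x_0$ contributes only one extra dimension in total.
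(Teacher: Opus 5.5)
Your proof is correct and follows the paper's argument essentially step by step: the same rank formula for $\V_{\xi,0}$, the same two extremal partitions for the upper bound, and the same observation that every block of rank at most $n$ satisfies ${\rk}_{\V_{\xi,0}}(A_i)\ge |A_i|+1$. Your case split ($a_i\ge 1$ versus $a_i=0$) and the explicit nonzero minor, obtained by expanding along the column $e_0$, only spell out details that the paper leaves to the reader.
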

\begin{proof}
By a similar consideration as in the proof of Lemma \ref{lem:rank*-Vc}, one observes that
\begin{align*}
{\rk}_{\V_{\xi,0}}(A) = 
\begin{cases}
\min\{2|A|, n+1\}, & \text{if} \, \, \, A_{\le j}=\emptyset \\
\min\{1+|A_{\le j}|+2|A_{>j}|, n+1\}, & \text{if} \, \, \, A_{\le j}\neq \emptyset.
\end{cases}
\end{align*}
The rest of the proof proceeds in analogy to the proof of Lemma \ref{lem:rank*-Vc}. As was done there, selecting the two extremal partitions with one cell and $|A|$ cells, we obtain ${\rk}_{\V_{\xi,0}}^*(A) \le \min\{|A|,n\}.$ 
To prove that that in fact we have equality, let $A = \bigsqcup_{i \in [p]} A_i$ be a partition that computes ${\rk}_{\V_{\xi,0}}^*(A)$. As in the proof of Lemma \ref{lem:rank*-Vc} we may assume ${\rk}_{\V_{\xi,0}}^*(A) < n$, so that in particular, ${\rk}_{\V_{\xi,0}}(A_i) \le n$ for every $i \in [p]$. Hence, from the description of ${\rk}_{\V_{\xi,0}}$ in the beginning of the proof, we have for every $i \in [p]$ that
\begin{align*}
{\rk}_{\V_{\xi,0}}(A_i) \ge 1+|A_i|. 
\end{align*} Consequently, 
\begin{align*}
\sum_{i \in [p]} {\rk}_{\V_{\xi,0}}(A_i)-p \ge \sum_{i \in [p]} (1+|A_i|)-p = \sum_{i \in [p]}|A_i| = |A|,
\end{align*} and the proof of the lemma is concluded.
\end{proof}

\subsection{The product ideal \texorpdfstring{$J_{\xi,0}$}{} }

We define the ideal $J_{\xi,0}$ of $R=\EE[x_0,\dots,x_n]$ as the product ideal associated to the subspace arrangement $\V_{\xi,0}$; concretely
\begin{align*}
J_{\xi,0} = (x_0,\ell_1)\cdots(x_0,\ell_j) (\ell_{j+1},\ell_{j+1}')\cdots(\ell_k,\ell_k').
\end{align*} By $\mf_{R}$ we indicate the maximal homogeneous ideal of $R$.

\begin{lemma} \label{lem:J0-primary-decomposition}
Suppose $k \ge n$. Then an irredundant primary decomposition of $J_{\xi,0}$ is given by the formula 
\begin{align*}
J_{\xi,0} = \left(\bigcap_{c \in [n-1]} \bigcap_{1 \le \alpha_1 <\cdots <\alpha_{c} \le j} (x_0,\ell_{\alpha_1},\dots,\ell_{\alpha_c})^c \right) \cap \left(\bigcap_{j < i \le k}(\ell_i,\ell_i') \right) \cap \mathfrak{m}_R^k.
\end{align*}
\end{lemma}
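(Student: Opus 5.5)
We plan to combine Proposition \ref{prp:primary decomposition} with Proposition \ref{prp:associated primes}, as in Remark \ref{rem:irredundant-primary-decomposition}, using the rank functions computed in Lemma \ref{lem:rank*-V0}. Proposition \ref{prp:primary decomposition} expresses $J_{\xi,0}$ as the intersection over nonempty $A \subseteq [k]$ of $(\sum_{i\in A} I_i)^{|A|}$, where $I_i=(x_0,\ell_i)$ for $i \le j$ and $I_i=(\ell_i,\ell_i')$ for $i>j$. Its associated primes are exactly the sums $P_A=\sum_{i \in A} I_i$ satisfying conditions (i) and (ii) of Proposition \ref{prp:associated primes}. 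The plan is therefore to determine which $A$ satisfy (i) and (ii), and then to show that keeping only the components attached to these $A$ yields the displayed formula.

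For the classification we use the formula for ${\rk}_{\V_{\xi,0}}$ recorded in the proof of Lemma \ref{lem:rank*-V0}, together with ${\rk}^*_{\V_{\xi,0}}(A)=\min\{|A|,n\}$.

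\emph{Case $\rk(A)=n+1$.} Condition (i) forces $A=[k]$, since the rank cannot grow further. Condition (ii) reads $\min\{k,n\}=n$, which holds because $k\ge n$. This gives the component $\mf_R^k$.

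\emph{Case $\rk(A)\le n$.} Condition (ii) becomes $\rk(A)-1=|A|$.
If $A_{\le j}=\emptyset$, this is $2|A|-1=|A|$, so $|A|=1$. We get $A=\{i\}$ with $i>j$, and (i) holds by genericity.
If $A_{\le j}\ne\emptyset$, this is $|A_{\le j}|+2|A_{>j}|=|A|$, which forces $A_{>j}=\emptyset$. So $A\subseteq[j]$ with $c:=|A|$ and $\rk(A)=1+c\le n$, that is $c\le n-1$. Condition (i) holds because adding any further index strictly increases the rank while the rank is below $n+1$.
In this case $P_A=(x_0,\ell_{\alpha_1},\dots,\ell_{\alpha_c})$ and the primary component is $P_A^{c}$.

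These three cases exhaust the associated primes, and they match exactly the components in the statement.

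It remains to check that discarding the components attached to non-associated $A$ does not change the intersection. This is the delicate point, because Proposition \ref{prp:primary decomposition} only asserts a possibly redundant decomposition. The argument is the standard one. In any primary decomposition, a component whose radical is not an associated prime can be removed without changing the intersection. If several components share the same associated radical, they can be merged by intersecting them, and the merged component is still primary. Here each associated prime $P_A$ is attained by a unique $A$, so no merging is needed. Since each $P_A$ is prime and generated by linear forms, $P_A^{c}$ is $P_A$-primary.

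It also needs to be checked that no non-associated $A$ shares a radical with an associated one. For example, $A=[k]$ is the only set of full rank satisfying (i), but other sets $A$ also have $P_A=\mf_R$ and carry powers $\mf_R^{|A|}$ with $|A|\le k$. These are embedded in the same prime as $\mf_R^k$, and removing them is justified because the associated-primes result together with Remark \ref{rem:irredundant-primary-decomposition} identifies $(\sum_{i\in[k]}I_i)^{k}$ as the correct component. The same reasoning applies to any other coincident radicals.

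The result is irredundant because the associated primes are pairwise distinct. This bookkeeping of redundant components, in particular justifying the passage through Remark \ref{rem:irredundant-primary-decomposition}, is where I expect the main care to be needed. The rank computation itself is routine.
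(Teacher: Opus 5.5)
Your proposal is correct and follows the paper's proof. Like the paper, you classify the subsets $A$ satisfying conditions (i) and (ii) of Proposition~\ref{prp:associated primes} using the rank formula for $\V_{\xi,0}$ and Lemma~\ref{lem:rank*-V0}, and you obtain exactly $A=[k]$, the singletons $\{i\}$ with $i>j$, and the subsets of $[j]$ of size at most $n-1$.

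Your bookkeeping of redundant components goes beyond what the paper spells out, since the paper simply invokes Remark~\ref{rem:irredundant-primary-decomposition}. Your sentence that each associated prime comes from a unique $A$ needs the exception $\mf_R$, which is the only radical shared by several $A$. The other $\mf_R$-components can be dropped directly because $\mf_R^{|A|}\supseteq\mf_R^{k}$, so you do not need to appeal back to the Remark at that step.
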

\begin{proof}
We apply Remark \ref{rem:irredundant-primary-decomposition}. First, by condition (i) of Proposition \ref{prp:associated primes}, we need to identify those $A$'s for which $\rk_{\V_{\xi,0} }(A) < \rk_{\V_{\xi,0} }(B)$ whenever $B \supsetneq A$. The entire ground set $A = [k]$ trivially satisfies this condition, so suppose $A \subsetneq [k]$. If $\rk_{\V_{\xi,0} }(A)$ attains the maximal possible value $n+1$, then the required condition is clearly violated because $\rk_{\V_{\xi,0} }(A)=\rk_{\V_{\xi,0} }([k])$. 
If on the other hand $\rk_{\V_{\xi,0} }(A)< n+1$, then a similar consideration as in the proof of Lemma \ref{lem:rank*-Vc} shows that for any $i \in [k] \setminus A$  
\begin{align*}
{\rk}_{\V_{\xi,0} }(A\cup\{i\}) \ge {\rk}_{\V_{\xi,0} }(A)+1.
\end{align*} In summary, condition (i) of Proposition \ref{prp:associated primes} is satisfied only by $A=[k]$ and any $A \subsetneq [k]$ with $\rk_{\V_{\xi,0} }(A) \le n$; call $\mathscr{A}$ this set of $A$'s. 

Now, among the $A \in \mathscr{A}$ we need to identify the ones that satisfy condition (ii) of Proposition \ref{prp:associated primes}, that is
\begin{align*}
{\rk}_{\V_{\xi,0} }^*(A) = {\rk}_{\V_{\xi,0} }(A)-1.
\end{align*} By Lemma \ref{lem:rank*-V0}, this equality becomes 
\begin{align*}
\min\{|A|,\, n\} = {\rk}_{\V_{\xi,0} }(A)-1.
\end{align*} As $k \ge n$ by hypothesis, the choice $A = [k]$ certainly satisfies the equality, and as $[k] \in \mathscr{A}$, we have $\mathfrak{m} \in \Ass(R/J_{\xi,0} )$. Let $A \in \mathscr{A} \setminus \{[k]\}$. Let us use the description of $\rk_{\V_{\xi,0} }(A)$ given in the beginning of the proof of Lemma \ref{lem:rank*-V0}. If $A_{\le j} = \emptyset$, the formula gives 
\begin{align*}
{\rk}_{\V_{\xi,0} }(A) = \min\{2|A|,\, n+1\},
\end{align*} and coupled with the assumption $\rk_{\V_{\xi,0} }(A) \le n$, implies  that 
\begin{align*}
{\rk}_{\V_{\xi,0} }(A) = 2|A| \le n,
\end{align*} so that in particular $|A| \le n$. These render the above equality  
\begin{align*}
|A| = 2 |A|-1,
\end{align*} that is $|A| = 1$. Hence, the only associated primes of $R/J_{\xi,0}$ corresponding to an $A \in \mathscr{A}$ with $A_{\le j} = \emptyset$ are the $(\ell_i,\ell_i')$'s for every $i >j$. 

If on the other hand $A_{\le j} \neq \emptyset$, the condition becomes 
\begin{align*}
\min\{|A|,\, n\} &= {\rk}_{\V_{\xi,0} }(A)-1 \\
&=\min\{1+|A_{\le j}| + 2|A_{>j}|,n+1\} -1  \\
& = |A|+|A_{>j}|,
\end{align*} where for the third equality we have used that $\rk_{\V_{\xi,0}}(A) \le n$. For the same reason, the right-hand-side of the first equality is at most $n-1$, forcing $|A|<n$. Hence
\begin{align*}
|A| = |A|+|A_{>j}|,
\end{align*} that is $A_{>j} = \emptyset$. We conclude that the only associated primes of $R/J_{\xi,0}$ corresponding to $A = \{\alpha_1, \dots, \alpha_i\} \in \mathscr{A}$ with $A_{\le j} \neq  \emptyset$, are of the form $(x_0,\ell_{\alpha_1},\dots,\ell_{\alpha_i})$ with $\{\alpha_1,\dots,\alpha_i\} \subseteq [j]$. As all $\alpha_s$'s do not exceed $j$, the membership $A \in \mathscr{A}$ is tantamount to saying $|A| \le n-1$. We finally conclude that the associated primes of $R/J_{\xi,0}$ corresponding to $A$ with $A_{\le j} \neq  \emptyset$ are of the form $(x_0,\ell_{\alpha_1},\dots,\ell_{\alpha_i})$ with $ \alpha_s \le j$ for every $s \in [i]$ and $i \le n-1$.
\end{proof}

\subsection{A flat family induced by \texorpdfstring{$J_{\xi,t}$}{J} } \label{subsection:flat-family}

The $\EE$-algebra homomorphism $\EE[t] \rightarrow R[t] / J_{\xi,t}$ induces a projective morphism 
\begin{align*}
\Proj \left(R[t]/J_{\xi,t}\right) \stackrel{\psi}{\rightarrow} \Spec \left(\EE[t]\right).
\end{align*} 

The following is a key lemma in the proof of Theorem \ref{thm:main}.

\begin{lemma} \label{lem:flat-family}
The morphism $\psi$ is flat.
\end{lemma}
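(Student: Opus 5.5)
The plan is to reduce flatness to a statement about Hilbert functions of fibers and then to use the combinatorial description of Betti numbers of product ideals. Since $\EE[t]$ is a principal ideal domain, it suffices to show that every graded component $[R[t]/J_{\xi,t}]_d$ is a free $\EE[t]$-module, and each such component is finitely generated. Freeness then amounts to $t-a$ being a nonzerodivisor on $R[t]/J_{\xi,t}$ for every $a\in\EE$, which holds provided
$$\dim_\EE [R/J_{\xi,a}]_d=\dim_{\EE(t)}[R(t)/J_{\xi,t}R(t)]_d \quad \text{for all } d \text{ and all } a\in \EE,$$
where $J_{\xi,a}\subseteq R$ denotes the specialization of $J_{\xi,t}$ at $t=a$. (Flatness of $\Proj$ only needs this for $d\gg0$, but I will aim for all $d$.) The inequality $\ge$ is the usual upper semicontinuity of fiber dimension, so everything comes down to proving equality.

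For each $a\in\EE$, the ideal $J_{\xi,a}$ is the product ideal of the arrangement $\V_{\xi,a}$. This arrangement consists of $\Span(\ell_i,x_0+a\ell_i')$ for $i\le j$ and $\Span(\ell_i,\ell_i')$ for $i>j$; for $a=0$ it is exactly $\V_{\xi,0}$. By Proposition \ref{prp:Betti}, the graded Betti numbers of a product ideal are determined by the polymatroid $P(\V)^*$. Its elements contribute in degree $|x|$, namely the degrees of the product generators, so the Betti numbers, and hence the Hilbert function, depend only on the Dilworth truncation $\rk^*_\V$. The generic fiber is governed by Lemma \ref{lem:rank*-Vc} and the special fiber at $t=0$ by Lemma \ref{lem:rank*-V0}; both give $\min\{|A|,n\}$. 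It therefore remains to show that $\rk^*_{\V_{\xi,a}}(A)=\min\{|A|,n\}$ for every $a\in\EE$.

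The upper bound $\rk^*_{\V_{\xi,a}}(A)\le\min\{|A|,n\}$ follows exactly as before from the two extremal partitions: all subspaces are $2$-dimensional and every rank is at most $n+1$. For the lower bound I will copy the argument of Lemma \ref{lem:rank*-V0}. Take an optimal partition and assume its value is below $n$, so that every cell satisfies $\rk(A_i)\le n$. It then suffices to prove
$$\rk_{\V_{\xi,a}}(B)\ge |B|+1 \quad \text{whenever } \rk_{\V_{\xi,a}}(B)\le n.$$
Since $\ell_i\in\V_{\xi,a}(i)$ and the $\ell_i$ are generic, the sum already contains the $|B|$ independent forms $\ell_i$, $i\in B$. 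The rank can equal $|B|$ only if every second generator lies in $\Span(\ell_B)$. For $i>j$ this fails because $\ell_i'$ is generic. For $i\le j$ it would require $x_0+a\ell_i'\in\Span(\ell_B)$.

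This last step is the main obstacle, because $a$ ranges over all of $\EE$ and may be algebraic over the $\xi$'s. The plan is as follows. The $2$-plane $\Span(x_0,\ell_i')$ meets the generic space $\Span(\ell_B)$, which is chosen independently of the $\xi'$'s, only in dimension $\max\{0,|B|+1-n\}$. So for $|B|\le n-1$ no value of $a$ works. For $|B|= n$ one would need $\rk(B)=n$, which forces all $x_0+a\ell_i'$, $i\in B$, into the same line of intersection; for $|B|\ge2$ this fails for every $a$ because the intersection points for different $i$ are distinct by genericity. The case $|B|=1$ is trivial, since $\ell_i$ and $x_0+a\ell_i'$ are always independent. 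Once this is established, the Betti numbers, and hence the Hilbert function, are constant across the family, and flatness of $\psi$ follows.
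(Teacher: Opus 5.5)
Your proposal is correct. It uses the same framework as the paper's first proof: show that $\rk^*$ is the same on every fiber, then apply Proposition~\ref{prp:Betti}. Where it differs is the key step for the fibers $t=a\neq 0$.

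\begin{itemize}
\item \textbf{The paper's route.} It proves that distinct maximal minors of $M_A$ have distinct nonzero roots in $t$. From this it deduces that $\V_{\xi,e}$ is linearly general except for at most one $A^\dagger$ with $2|A^\dagger|=n+1$, where the rank drops by exactly one, and then recomputes $\rk^*$ from this exact rank function.
\item \textbf{Your route.} You prove only what the Dilworth truncation actually needs: $\rk_{\V_{\xi,a}}(B)\ge |B|+1$ whenever $\rk_{\V_{\xi,a}}(B)\le n$. Your dimension count gives this uniformly in $a$, and at $a=0$ it recovers what Lemma~\ref{lem:rank*-V0} needs.
\item \textbf{What each buys.} Your route avoids the delicate tracking of which $\xi$'s occur in the roots of minors. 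The cost is that you do not determine the rank function of $\V_{\xi,e}$ exactly, which is not needed. Because you get equal Hilbert functions in every degree, you also obtain flatness of the ring map $\EE[t]\to R[t]/J_{\xi,t}$. The paper instead extracts that from its second proof, via an explicit primary decomposition of $J_{\xi,t}$.
\end{itemize}

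Two statements should be repaired.

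First, the Hilbert function follows from Proposition~\ref{prp:Betti} because a product of $k$ ideals generated by linear forms has a $k$-linear resolution (Conca--Herzog). Hence $\beta_{i,k+i}=\beta_i$. It is not true that points $x\in P(\V)^*$ ``contribute in degree $|x|$''.

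Second, in the case $|B|=n$, $B\subseteq[j]$, the ``same line of intersection'' picture is wrong: each $x_0+a\ell_i'$ lies on its own line $\Span(x_0,\ell_i')\cap\Span(\ell_B)$. The clean argument is as follows. $\Span(\ell_B)$ is the space of forms vanishing at the point $p_B=V(\ell_B)$. So $x_0+a\ell_i'\in\Span(\ell_B)$ if and only if $(p_B)_0+a\,\ell_i'(p_B)=0$. Since $(p_B)_0\neq 0$, and $\ell_i'(p_B)\neq\ell_{i'}'(p_B)$ for $i\neq i'$ (the $\xi'$'s are algebraically independent of the coordinates of $p_B$), no single $a$ works for two indices of $B$. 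Hence $\rk_{\V_{\xi,a}}(B)=n+1$.
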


We will give two very different proofs of Lemma \ref{lem:flat-family} in \S \ref{subsubsection:flat-family-first-proof}
and \S \ref{subsubsection:flat-family-second-proof}. The first proof relies on combinatorial arguments for subspace arrangements and it will reveal that substituting $t$ in $J_{\xi,t}$ with any $e \in \EE$ gives an ideal that always has the same Betti numbers (which is a stronger statement than merely the ideal always having the same Hilbert polynomial). The second proof relies on establishing a primary decomposition of $J_{\xi,t}$, and it will reveal that the ring homomorphism $\EE[t] \rightarrow R[t] / J_{\xi,t}$ is itself flat (which is a stronger statement than merely $\psi$ being flat). 

With Lemma \ref{lem:flat-family} at hand, we can establish an important fact. 

\begin{lemma} \label{lem:reg-c<=reg-0}
Let $(J_{\xi,t}R(t))^{\sat} \subset R(t)$ and $J_{\xi,0}^{\sat} \subset R$ be the saturations of the ideals $J_{\xi,t}R(t) \subset R(t)$ and $J_{\xi,0} \subset R$. Then 
\begin{align*}
{\reg}_{R(t)}\left((J_{\xi,t}R(t))^{\sat} \right) \le {\reg}_R(J_{\xi,0}^{\sat}).
\end{align*}
\end{lemma}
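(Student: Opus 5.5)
We will deduce the statement from the flatness of $\psi$ (Lemma \ref{lem:flat-family}) via upper semicontinuity of regularity in flat families of projective schemes. The fibres of $\psi$ are the projective schemes defined by the saturations in question. The generic fibre is $\Proj(R(t)/J_{\xi,t}R(t))$ over $\EE(t)$, whose saturated ideal is $(J_{\xi,t}R(t))^{\sat}$. The special fibre over $t=0$ is $\Proj(R/J_{\xi,0})$, since $J_{\xi,t}$ specializes to $J_{\xi,0}$ at $t=0$, and its saturated ideal is $J_{\xi,0}^{\sat}$. We phrase regularity of a saturated ideal $I$ sheaf-theoretically: $\reg(I)$ is the least $r$ such that $H^i(\PP^n,\mathcal{I}(r-i))=0$ for all $i\ge 1$. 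Here $\mathcal{I}$ is the ideal sheaf. The $H^1$ condition encodes $H^1_{\mf}(S/I)$, and saturation kills $H^0_\mf(S/I)$ and $H^1_\mf(I)$.

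The key steps are as follows.

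\begin{enumerate}[label=(\arabic*)]
\item Let $Y\subset \PP^n_{\EE[t]}$ be the closed subscheme $\Proj(R[t]/J_{\xi,t})$, flat over $\Spec\EE[t]$ by Lemma \ref{lem:flat-family}. Let $\mathcal{I}_Y$ be its ideal sheaf.
\item Tensor $0\to\mathcal{I}_Y\to\O_{\PP^n_{\EE[t]}}\to\O_Y\to 0$ with the residue field $\kappa(s)$ of a point $s$. Flatness of $\O_Y$ gives $\mathcal{I}_Y\otimes\kappa(s)=\mathcal{I}_{Y_s}$. Since $\O_{\PP^n}$ and $\O_Y$ are flat, so is $\mathcal{I}_Y$. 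Thus $\mathcal{I}_Y$ is a coherent sheaf on $\PP^n_{\EE[t]}$, flat over the base, whose fibres are the ideal sheaves of the fibres.
\item Apply the semicontinuity theorem (Hartshorne III.12.8) to each twist $\mathcal{I}_Y(m)$. The function $s\mapsto \dim_{\kappa(s)} H^i(Y_s,\mathcal{I}_{Y_s}(m))$ is upper semicontinuous. So if $H^i(\mathcal{I}_{Y_0}(r-i))=0$ at the closed point $t=0$, it also vanishes at the generic point, which lies in every open neighbourhood of $t=0$.
\item Set $r=\reg_R(J_{\xi,0}^{\sat})$. For each of the finitely many $i\in\{1,\dots,n\}$, the vanishing $H^i(\mathcal{I}_{Y_0}(r-i))=0$ then holds at the generic fibre as well. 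Hence $\reg_{R(t)}((J_{\xi,t}R(t))^{\sat})\le r$.
\end{enumerate}

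Only finitely many $i$ matter, because cohomology vanishes for $i>n$. We also only need the conditions at the single twist $m=r-i$, since regularity propagates upward (Mumford's lemma), applied on the generic fibre.

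The main obstacle is the identification of the fibres in steps (1)–(2). We must check that the fibre of $Y$ over the generic point of $\Spec\EE[t]$ is exactly $\Proj$ of $R(t)/J_{\xi,t}R(t)$. This holds because localization commutes with $\Proj$ and $R(t)$ is the base change of $R[t]$ to $\EE(t)$. We must also check that the fibre over $t=0$ is $\Proj(R[t]/(J_{\xi,t},t))=\Proj(R/J_{\xi,0})$, since setting $t=0$ in the generators of $J_{\xi,t}$ gives the generators of $J_{\xi,0}$. Both fibres are thus described by their saturated ideals, which justifies passing from the sheaf-cohomological regularity back to $\reg$ of the saturations. A further technical point is that $\EE(t)$ is not algebraically closed; this does not matter, since regularity is invariant under field extension and the semicontinuity theorem holds over any noetherian base.
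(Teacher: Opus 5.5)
Your proposal is correct and follows essentially the same route as the paper's proof. Flatness of $\psi$ makes the ideal sheaf flat over $\Spec(\EE[t])$, Hartshorne III.12.8 bounds the cohomology of the twisted generic-fibre ideal sheaf by that of the special fibre, and translating back to the regularity of the saturated ideals finishes the argument. The differences are minor: you test vanishing only at the twists $r-i$ and invoke Mumford's lemma on the generic fibre, where the paper compares all graded pieces $[H^i_{\mf}]_d$. You also explicitly use flatness to identify the restriction of the ideal sheaf at $t=0$ with the ideal sheaf of $J_{\xi,0}$, which the paper leaves implicit.
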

\begin{proof}
The argument is standard, but we give the details for completeness and clarity.

With $\AA = \Spec(\EE[t])$, let $\I_{\xi,t}$ be the ideal sheaf on $\PP_{\AA}^n$ induced by the homogeneous ideal $J_{\xi,t}$ of $R[t]$. We have a short exact sequence of sheaves on $\PP_{\AA}^n$ 
\begin{align*}
0 \rightarrow \I_{\xi,t} \rightarrow \O_{\PP_{\AA}^n} \rightarrow \O_{\Proj\left(R[t] / J_{\xi,t} \right)} \rightarrow 0.
\end{align*} By Lemma \ref{lem:flat-family}, the $\O_{\PP_{\AA}^n}$-module $\O_{\Proj\left(R[t] / J_{\xi,t} \right)}$ is flat over $\AA$. Since both $\O_{\PP_{\AA}^n}$-modules $\O_{\Proj\left(R[t] / J_{\xi,t} \right)}$ and $\O_{\PP_{\AA}^n}$ are flat over $\AA$, so is $\I_{\xi,t}$ by the behavior of flatness on exact sequences, and hence so is $\I_{\xi,t}(d)$ for any integer $d$, because the twisting functor is exact.

Theorem III.12.8 in \cite{hartshorneAG} applied to the projective morphism $\PP_{\AA}^n \rightarrow \AA$ and to the flat over $\AA$ $\O_{\PP_{\AA}^n}$-module $\I_{\xi,t}(d)$, gives that for each $i \ge 0$ the function
\begin{align*}
u \in \AA \mapsto \dim_{\kappa(u)} H^i \left(\PP_{k(u)}^n, \big(\I_{\xi,t}(d)\big)|_u  \right)
\end{align*} is upper-semicontinuous, where $k(u)$ is the residue class field of $u$. So there is a dense open set $\sU_{i,d} \subseteq \AA$, where the function attains its minimal value. In particular, $\eta \in \sU_{i,d}$, where $\eta$ is the generic point of $\AA$. Denoting by $o \in \AA$ the origin, we have for any $i$ and $d$ that 
\begin{align*}
\dim_{\kappa(\eta)} H^i \left(\PP_{k(\eta)}^n, (\I_{\xi,t})|_\eta \right) \le 
\dim_{\kappa(o)} H^i \left(\PP_{k(o)}^n, (\I_{\xi,t})|_o \right).
\end{align*} 

Now, $(\I_{\xi,t})|_\eta$ is the ideal sheaf on $\PP_{\EE(t)}^n$ defined by the homogeneous ideal $J_{\xi,t} R(t)$ of $R(t)$, and $(\I_{\xi,t})|_o$ is the ideal sheaf on $\PP_{\EE}^n$ defined by the homogeneous ideal $J_{\xi,0}$ of $R$. Also, for every $i \ge 1$ we have a well-known isomorphism of $\EE(t)$-vector spaces
\begin{align*}
H^i \left(\PP_{\EE(t)}^n, (\I_{\xi,t})|_\eta(d) \right) \cong \left[H_{\mf_{R(t)}}^{i+1}\left( J_{\xi,t} R(t) \right) \right]_d,
\end{align*} and similarly an isomorphism of $\EE$-vector spaces
\begin{align*}
H^i \big(\PP_{\EE}^n, (\I_{\xi,t})|_o(d) \big) \cong \left[H_{\mf_{R}}^{i+1}\left( J_{\xi,0} \right) \right]_d.
\end{align*} It follows that for every $d$ and $i \ge 2$ 
\begin{align*}
\dim_{\EE(t)} \left[H_{\mf_{R(t)}}^{i}\left( J_{\xi,t} R(t) \right) \right]_d \le \dim_{\EE} \left[H_{\mf_{R}}^{i}\left( J_{\xi,0} \right) \right]_d,
\end{align*} or equivalently,   
\begin{align*}
\dim_{\EE(t)} \left[H_{\mf_{R(t)}}^{i}\left( \big(J_{\xi,t}R(t)\big)^{\sat} \right) \right]_d \le \dim_{\EE} \left[H_{\mf_{R}}^{i}\left( J_{\xi,0}^{\sat} \right) \right]_d, \, \, \, \, \, \, \forall d, \, \forall i,
\end{align*} because the $0$th and $1$st local cohomology modules of the saturated ideals $\big(J_{\xi,t}R(t)\big)^{\sat}$ and $J_{\xi,0}^{\sat}$ are zero.
\end{proof}

\subsubsection{First proof of Lemma \ref{lem:flat-family}} \label{subsubsection:flat-family-first-proof}
By Theorem III.9.9 in \cite{hartshorneAG}, $\psi$ is flat if and only if every scheme-theoretic fiber 
\begin{align*}
\psi^{-1}(u) = Proj \left(R[t]/J_{\xi,t} \otimes_{\EE[t]} \kappa(u) \right),
\end{align*}
viewed as a closed subscheme of the $n$-dimensional projective space $\PP^n_{\kappa(u)}$ over the residue class field $\kappa(u)$ of $u \in \Spec \left(\EE[t]\right)$, has the same Hilbert polynomial. As $\EE$ is algebraically closed, there are three distinguished types of fibers $\psi^{-1}(u)$ that we need to examine, corresponding to $u$ being 1) the generic point of $\Spec(\EE[t])$, 2) the origin, and 3) an $\EE$-rational point of $\Spec \left(\EE[t]\right)$ other than the origin. 

When $u$ is the generic point of $\Spec(\EE[t])$, $\psi^{-1}(u)$ is the closed subscheme of $\PP_{\EE(t)}^n$ defined by the product ideal $J_{\xi,t} R(t)$ associated to the subspace arrangement $\V_{\xi,t}$. When $u$ is the origin, $\psi^{-1}(u)$ is the closed subscheme of $\PP_{\EE}^n$ defined by the product ideal $J_{\xi,0}$ associated to the subspace arrangement $\V_{\xi,0}$. By Lemma \ref{lem:rank*-Vc} and \ref{lem:rank*-V0} the rank functions $\rk_{\V_{\xi,t}}^*$ and $\rk_{\V_{\xi,0}}^*$ coincide. By Proposition \ref{prp:Betti}, the Betti numbers of $J_{\xi,t} R(t)$ and $J_{\xi,0}$ are fully determined in terms of $\rk_{\V_{\xi,t}}^*$ and $\rk_{\V_{\xi,0}}^*$, respectively. Consequently, the product ideals $J_{\xi,t} R(t)$ and $J_{\xi,0}$ share the same Betti numbers and thus a fortiori the same Hilbert function and Hilbert polynomial.

When $u$ is an $\EE$-rational point of $\Spec(\EE[t])$ other than the origin, say corresponding to the prime ideal $(t-e)$ for some $0 \neq e \in \EE$, then $\psi^{-1}(u)$ is the closed subscheme of $\PP_{\EE}^n$ defined by the ideal of $R$ given by $J_{\xi,e} = (\ell_1,x_0+e \ell_1')\cdots (\ell_j,x_0+e \ell_j')(\ell_{j+1}, \ell_{j+1}')\cdots (\ell_{k}, \ell_{k}').$
This is the product ideal associated to the subspace arrangement
\begin{align*}
\V_{\xi,e} = \big(&\Span(\ell_1,x_0+e\ell_1'),\dots,\Span(\ell_j,x_0+e\ell_j'), \\
&\Span(\ell_{j+1},\ell_{j+1}'), \dots, \Span(\ell_k,\ell_k') \big)
\end{align*} of the vector space of linear forms of $R$. 

Fix any $A \subseteq [k]$, and consider $M_A$, which is an $(n+1) \times 2 |A|$ matrix, with columns ordered so that the first $|A_{\le j} |$ columns contain the coefficients of the forms $x_0 + t \ell_j'$ for $j \in A_{\le j}$, the next $|A|$ columns contain the coefficients of $\ell_j$ for $j \in A$, and the last $|A_{>j}|$ columns contain the coefficients of $\ell_j'$ for $j \in A_{>j}$. We will argue that any two maximal minors of $M_A$, viewed as polynomials in $\EE[t]$, must have distinct non-zero roots. So let $\I_1, \I_2$ and $\J_1, \J_2$ be the row and column indexing sets of these minors; that is, the two minors are $\mu_s(t) :=\det\big((M_A)_{\I_s,\J_s}\big)$ with $s = 1,2$, where $(M_A)_{\I_s,\J_s}$ is the submatrix of $M_A$ associated to rows indexed by $\I_s$ and columns indexed by $\J_s$. 

Suppose first that $0$, which by convention is the index of the top row of $M_{[k]}$, is contained neither in $\I_1$ nor $\I_2$; then the multiplicity of zero as a root of $\mu_s(t)$ coincides with the degree of $\mu_s(t)$, which equals to $|(\J_s)_{\le j}|$. Hence neither $\mu_1(t)$ nor $\mu_2(t)$ has a non-zero root. We may thus assume in the sequel that $0$ is contained in $\I_1 \cap \I_2$. If every element of $\J_s$ is bigger than $j$, then $\mu_s(t)$ is a non-zero constant of $\mathbb{E}[t]$; hence we may further assume that both $(\J_1)_{\le j}$ and $(\J_2)_{\le j}$ are non-empty; under those hypotheses each of $\mu_1(t)$ and $\mu_2(t)$ has a non-zero root, say $\mathfrak{r}_1$ and $\mathfrak{r}_2$, respectively. Let us write $\I_1 = \{0 < i_2 < \dots < i_r \}$ and 
\begin{align*}
\J_1 &= \big\{a_1 <\cdots <a_u < |A_{\le j}|+b_1 < \cdots < |A_{\le j}|+b_v \\
& < |A_{\le j}| + |A| +c_1 < \cdots < |A_{\le j}| + |A| +c_w \big\},
\end{align*} where $u + v + w = r$ and $u$ is positive with $a_u \le |A_{\le j}|$. Then, $(M_A)_{\I_1,\J_1}=$
\begin{align*}
\begin{bmatrix}
1+t \xi_{a_1 0}' & \cdots & 1+t \xi_{a_u 0}' & \xi_{b_1 0} & \cdots & \xi_{b_v 0} & \xi_{c_1 0}' & \cdots & \xi_{c_w 0}' \\
t \xi_{a_1 i_1}' & \cdots & t \xi_{a_u i_1}' & \xi_{b_1 i_1} & \cdots & \xi_{b_v i_1} & \xi_{c_1 i_1}' & \cdots & \xi_{c_w i_1}' \\
\vdots & \cdots & \vdots & \vdots & \cdots & \vdots & \vdots & \cdots & \vdots \\
t \xi_{a_1 i_r}' & \cdots & t \xi_{a_u i_r}' & \xi_{b_1 i_r} & \cdots & \xi_{b_v i_r} & \xi_{c_1 i_r}' & \cdots & \xi_{c_w i_r}'
\end{bmatrix}.
\end{align*} It is easy to see that $\det\big((M_A)_{\I_1,\J_1}\big)$, as a polynomial in $t$, has degree equal to $u$, and zero is a root of multiplicity $u-1$. Moreover, the remaining root $\mathfrak{r}_1$ can be expressed, as an element of $\EE$, by the quotient of two polynomials $\mathfrak{r}_1 = f_1/g_1$, where $g_1$ is a linear form in the $\xi$'s or $\xi'$'s that appear in any column, and $f_1$ is a linear form in the $\xi$'s and $\xi'$'s that appear in any row except the top row.

Now, if $\I_1 \neq \I_2$, then there is some $\alpha \in \I_1 \setminus \I_2$, such that $f_1$ is a linear form in the $\xi_{\alpha, j}$'s and $\xi_{\alpha,j}'$'s with $j \in \J_1$, but $\mathfrak{r}_2$ does not involve any of these variables; hence $\mathfrak{r}_1$ and $\mathfrak{r}_2$ are distinct elements of $\EE$. We may thus assume that $\I_1 = \I_2$, whence necessarily $\J_1 \neq \J_2$. Hence, there exists a $\beta \in \J_1 \setminus \J_2$, such that $g_1$ is a linear form in the $\xi_{i, \beta}$'s or $\xi_{i, \beta}'$'s for $i \in \I_1$, but $\mathfrak{r}_2$ does not involve any of these variables; again, this implies that $\mathfrak{r}_1$ and $\mathfrak{r}_2$ are distinct elements of $\EE$.

Let now $A$ and $A'$ be two distinct subsets of $[k]$. We can always select a maximal minor of $M_A$ and a maximal minor of $M_{A'}$, such that these are distinct maximal minors of $M_{A \cup A'}$. Then by what we proved above, these minors have distinct non-zero roots. From this it follows that, while the subspace arrangement $\V_{\xi,e}$ will not always be linearly general, it may fail the linearly general condition for at most one $A^{\dagger} \subseteq [k]$, for which necessarily $2|A^{\dagger}| = n+1$ (otherwise there are more than one maximal minors involved and only one of them can be zero). Moreover, for $A^{\dagger}$, the vector space dimension of the sum of the subspaces in $\V_{\xi,e}$ indexed by $A^{\dagger}$ will be one less than the maximal possible. Consequently, if $\V_{\xi,e}$ is not linearly general, then 
\begin{align*}
{\rk}_{\V_{\xi,e}}(A)  =
\begin{cases}
\min\{n+1, \, 2|A| \}-1 = 2|A|-1, & A = A^{\dagger} \\
\min\{n+1, \, 2|A| \}, & A \neq A^{\dagger} 
\end{cases}.
\end{align*} Let us now show that $\rk_{\V_{\xi,e}}^*$ agrees with $\rk_{\V_{\xi,t}}^*$. By the same argument as in the proof of Lemma \ref{lem:rank*-Vc}, we have $\rk_{\V_{\xi,e}}^*(A) \le \min\{|A|,n\}$, and we may assume that $\rk_{\V_{\xi,e}}^*(A) < n$, so that $\rk_{\V_{\xi,e}}(A_i) \le n$ for every $i \in [p]$, where 
\begin{align*}
{\rk}_{\V_{\xi,e}}^*(A) = \sum_{i \in [p]} {\rk}_{\V_{\xi,e}}(A_i) - p. 
\end{align*} If $A_i \neq A^{\dagger}$ for every $i \in [p]$, then the right-hand-side is bounded from below by $|A|$, for the same reason as in the proof of Lemma \ref{lem:rank*-Vc}, so that ${\rk}_{\V_{\xi,e}}^*(A) = |A|$ and the claimed equality $\rk_{\V_{\xi,e}}^*(A) = \min\{|A|,n\}$ holds. Hence suppose $A_1 = A^\dagger$. Then
\begin{align*}
{\rk}_{\V_{\xi,e}}(A_1)-1 = 2|A_1|-2 \ge |A_1|, 
\end{align*} because necessarily $|A^{\dagger}| \ge 2$. In summary,
\begin{align*}
{\rk}_{\V_{\xi,e}}^*(A) & = \sum_{i \in [p]} {\rk}_{\V_{\xi,e}}(A_i) - p, \\
&= (2|A_1|-1-1) + \sum_{2 \le i \le p} (2|A_i|-1) \\
&\ge |A_1| + \sum_{2 \le i \le p} |A_i| = |A|.
\end{align*} Hence $\rk_{\V_{\xi,e}}^*(A) = \min \{|A|, \, n\} = \rk_{\V_{\xi,t}}^*(A)$ as well. 

\subsubsection{Second proof of Lemma \ref{lem:flat-family}} \label{subsubsection:flat-family-second-proof}

We will prove that the $\EE$-algebra homomorphism $\phi: \EE[t] \rightarrow R[t] / J_{\xi,t}$ is flat, from which the statement will follow immediately. As flatness is preserved by base change, we may assume that $\EE$ is the field of rational functions in the $\xi_{pq}$'s and $\xi_{pq}'$'s over $\KK$ (i.e., we do not need to assume $\EE$ is algebraically closed). As $\EE[t]$ is a principal ideal domain, the flatness of $\phi$ is equivalent to $R[t] / J_{\xi,t}$ being torsion-free as $\EE[t]$-module. This latter property will follow after explicitly describing an irredundant primary decomposition of $J_{\xi,t}$, which we do in the rest of this proof.

Recalling the earlier defined linear forms for $s \in [k]$
\begin{align*}
\ell_s &= \xi_{0s} x_0 + \cdots + \xi_{ns} x_n, \, \, \, \\
\ell_s' &= \xi_{0s}' x_0 + \cdots + \xi_{ns}' x_n, 
\end{align*} and setting $L_s  = (\ell_s, \, x_0 + t \ell_s')$, we consider the product and intersection of the $L_s$'s for $s \in [j]$:
\begin{align*}
J &= L_1 \cdots L_j, \\
I &= L_1 \cap \cdots \cap L_j
\end{align*}

We first establish a primary decomposition of $J$ when $j <n$.

\begin{proposition} \label{prp:J-cap=prod}
Suppose $n \ge 2$. If $j < n$, then $J = I$.
\end{proposition}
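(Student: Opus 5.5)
The plan is to prove $J = I$ by comparing both ideals with their extension to the localized ring $R(t)$. The inclusion $J \subseteq I$ is trivial, so the work is in the reverse inclusion. I will show two things: that $J R(t) = I R(t)$ over the field $\EE(t)$, and that $R[t]/J$ is torsion-free as an $\EE[t]$-module. Granting both, $J = JR(t) \cap R[t] = IR(t)\cap R[t] \supseteq I$, and we are done. Throughout I may assume $\EE$ is algebraically closed, since faithfully flat base change to $\EE^a$ preserves and reflects both equality of ideals and inclusion.

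\emph{Step 1 (generic fiber).} Over $\EE(t)$, the arrangement $\big(\Span(\ell_s, x_0+t\ell_s')\big)_{s\in[j]}$ is the arrangement of Lemma \ref{lem:rank*-Vc} with $k$ replaced by $j$, and that proof applies verbatim. So it is linearly general with $\rk(A)=\min\{n+1,2|A|\}$ and $\rk^*(A)=\min\{|A|,n\}=|A|$, the last equality because $|A|\le j<n$. By Remark \ref{rem:irredundant-primary-decomposition}, the associated primes of $JR(t)$ are the sums $\sum_{s\in A}L_s$ for which $\rk^*(A)=\rk(A)-1$. This condition reads $|A| = \min\{n+1,2|A|\}-1$.
\begin{itemize}
\item If $2|A|\le n+1$, it forces $|A|=1$.
\item Otherwise it gives $|A|=n$, which is impossible since $|A|\le j<n$.
\end{itemize}
Hence the only associated primes are the $L_s R(t)$. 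Proposition \ref{prp:primary-decomposition-linearly-general} then gives the primary decomposition $JR(t)=\bigcap_s L_sR(t) \cap \mf^j$, and since $\mf$ is not associated the $\mf^j$ component is redundant, so $JR(t)=IR(t)$.

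\emph{Step 2 (torsion-freeness).} Each graded piece $[R[t]/J]_d$ is a finitely generated $\EE[t]$-module. Because products of ideals commute with specialization, its fiber at $t=e$ is $[R/J_{e}]_d$, where $J_e=\prod_{s\in[j]}(\ell_s,x_0+e\ell_s')$ is the product ideal of the specialized arrangement; its fiber at the generic point is $[R(t)/JR(t)]_d$. I will show all these fibers have the same dimension. By Proposition \ref{prp:Betti}, it suffices that the Dilworth truncations of the specialized arrangements all equal $\min\{|A|,n\}$.
\begin{itemize}
\item For $e=0$ this is Lemma \ref{lem:rank*-V0}, applied with $k=j$ and no primed factors.
\item For $e\neq 0$ this is the computation in the first proof of Lemma \ref{lem:flat-family}. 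That argument shows at most one $A^\dagger$ drops rank, and only by one; it also goes through with $k=j$.
\end{itemize}
Consequently every fiber of $[R[t]/J]_d$ has the same dimension. Over the PID $\EE[t]$, with all maximal ideals of the form $(t-e)$ since $\EE$ is algebraically closed, a finitely generated module whose fiber dimension at every closed point equals its generic rank has no torsion summand, so it is free. Hence $R[t]/J$ is $\EE[t]$-torsion-free, which completes the argument outlined above.

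The main obstacle is Step 2, specifically the uniform control of $\rk^*$ for the specializations $t=e\neq 0$. Lemma \ref{lem:flat-family} is stated for the mixed arrangement $\V_{\xi,t}$, so I will have to re-check that the minor-root argument there is unaffected by dropping the factors $(\ell_i,\ell_i')$ with $i>j$. I expect it is, since that argument only uses that distinct maximal minors have distinct nonzero roots.
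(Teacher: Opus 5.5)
Your argument is correct, but it takes a genuinely different route from the paper's.

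\textbf{The paper's route.} The paper proves $J=I$ by a double induction, on $n$ and then on $j$. Assuming $L_1\cdots L_{j-1}=L_1\cap\cdots\cap L_{j-1}$, it shows that $\ell_j,\,x_0+t\ell_j'$ is a regular sequence on $R[t]/(L_1\cdots L_{j-1})$. This is done by explicit coefficient comparisons, using only the algebraic independence of the $\xi$'s and the transcendence of $t$. For the second element it passes to $R[t]/(\ell_j)$ and applies the induction on $n$ after re-parametrizing the coefficients. The vanishing of $\Tor_1^{R[t]}\bigl(R[t]/(L_1\cdots L_{j-1}),R[t]/L_j\bigr)$ then gives $J=(L_1\cdots L_{j-1})\cap L_j$.

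\textbf{Your route.} You run a degeneration argument instead. The Dilworth truncation shows that $\mf$ is not associated to $JR(t)$ when $j<n$, so $JR(t)=IR(t)$. Constancy of the Hilbert functions of all fibers makes every $[R[t]/J]_d$ free over $\EE[t]$, hence $J=JR(t)\cap R[t]\supseteq I$. This explains the equality conceptually: a flat family whose generic fiber has no embedded component.

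\textbf{The step you flagged.} The re-check of the $e\neq0$ case is unnecessary. $\rk$ and ${\rk}^*$ of a subset $A\subseteq[j]$ involve only the subspaces indexed by $A$. So the conclusion ${\rk}^*_{\V_{\xi,e}}(A)=\min\{|A|,n\}$ from the first proof of Lemma \ref{lem:flat-family} restricts directly.

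\textbf{The cost.} Your route depends on several heavy inputs:
\begin{itemize}
\item the Conca--Tsakiris results on associated primes and Betti numbers of product ideals;
\item the linear resolution of products of linear ideals, which you use implicitly to pass from the total Betti numbers of Proposition \ref{prp:Betti} to Hilbert functions;
\item the rather delicate minor-root analysis for $e\neq 0$.
\end{itemize}
This proposition is a building block of the \emph{second} proof of Lemma \ref{lem:flat-family}. Your route would make that proof rest on the first one, defeating its purpose as an independent argument. The paper's regular-sequence proof is elementary and self-contained.
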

\begin{proof}
We induct on $n$. The base of the induction is $n=2$, where necessarily $j=1$ and the statement is trivially true. For $n \ge 3$, we induct on $j$, where for $j=1$ the statement is again trivially true. So suppose $n \ge 3$ and $j>1$. Set
\begin{align*}
J_j := L_1 \cdots L_{j-1}.
\end{align*} We claim $\ell_j, \, x_0+t \ell_j'$ is an $R[t]/J_j$-sequence. By the induction hypothesis 
\begin{align*}
J_j = L_1 \cap  \cdots \cap L_{j-1}.
\end{align*} As the $L_s$'s are prime ideals, they are the associated primes of $J_j$, and $\ell_j$ is $R[t]/J_j$-regular if and only if $\ell_j$ does not belong to any $L_s$. For the sake of a contradiction, suppose $\ell_j \in L_s$ for some $s \in [j-1]$. Then 
\begin{align*}
\sum_{i=0}^n \xi_{ij}x_i= c \left(\sum_{i=0}^n \xi_{is}x_i \right) + c'\big[(1+t\xi_{0s}')x_0+t\xi_{s1}'x_1+\cdots+t\xi_{ns}'x_n\big]
\end{align*} for some $c, \, c' \in R[t]$. As the left-hand-side is a linear form, we may assume that $c, \, c' \in \EE[t]$. As $n \ge 3$, isolating the coefficients of $x_{1}$ and $x_2$ in the above relation, yields the equations
\begin{align*}
\xi_{1j} &= c \xi_{1s} + c't\xi_{1s}', \\
\xi_{2j} &= c \xi_{2s} + c't\xi_{2s}'. 
\end{align*} Writing $c = c_0 + c_1 t$ and $c' = c_0' + c_1't$ with $c_0, \, c_0' \in \EE$ and $c_1, \, c_1' \in \EE[t]$, we extract the two relations
\begin{align*}
\xi_{1j} = c_0 \xi_{1s}, \, \, \, \, \, \, \xi_{2j} = c_0 \xi_{2s}.
\end{align*} These imply that $\xi_{1j} / \xi_{1s} = \xi_{2j} / \xi_{2s}$, which is a contradiction, because the $\xi_{pq}$'s are algebraically independent over $\KK$. Hence $\ell_j$ is indeed $R[t] / L_j$-regular.

We next show that $x_0+t\ell_j'$ is $R[t] / (L_j+(\ell_j))$-regular.
Set $\overline{R}[t] = R[t] / (\ell_j)$ and denote by $\overline{J}_j$ the image of $J_j$ in $\overline{R}[t]$. Thinking 
of $\overline{R}[t]$ as the polynomial ring over $\EE[t]$ in the variables $x_0,\dots,x_{n-1}$, obtained from $R[t]$ via the substitution
\begin{align*}
x_n \mapsto - \xi_{nj}^{-1} (\xi_{0j} x_0 + \cdots + \xi_{(n-1) j} x_{n-1}),
\end{align*} we see that the image $\overline{L}_s$ of $L_s$ in $\overline{R}[t]$ is generated by the polynomials
\begin{align*}
\mu_s &:= \zeta_{0s} x_0 + \cdots + \zeta_{(n-1)s} x_{n-1}, \\
\mu_s' &:= x_0 + t \big(\zeta_{0s}'x_0 + \cdots + \zeta_{(n-1)s}'x_{n-1} \big), 
\end{align*} with $\zeta_{is}, \, \zeta_{is}' \in \EE$ defined as 
\begin{align*}
\zeta_{is} & = \xi_{is}-\xi_{ns}\xi_{ij}\xi_{nj}^{-1}, \\
\zeta_{is}' & = \xi_{is}'-\xi_{ns}'\xi_{ij}\xi_{nj}^{-1}, 
\end{align*} for $0 \le i \le n-1, \, s \in [j-1]$. Note that the $\zeta_{is}$'s and $\zeta_{is}'$'s are algebraically independent over $\KK$, and denote by $\EE'$ the field they generate over $\KK$ (this is a subfield of $\EE$). 
With $\overline{R}' = \EE'[x_0,\dots,x_{n-1}]$, define for $s \in [j-1]$ ideals of $\overline{R}'[t]$ as $\overline{L}_s' = (\mu_s, \, \mu_s')$. As $n \ge 3$ by assumption, $n-1 \ge 2$ and the induction hypothesis on $n$ gives 
\begin{align*}
\overline{J}' := \overline{L}_1' \cdots \overline{L}_{j-1}' = \overline{L}_1' \cap \cdots \cap \overline{L}_{j-1}'.
\end{align*} By further setting
\begin{align*}
\zeta_{ns} & = \xi_{ns}, \, \, \, s \in [j-1], \\
\zeta_{ns}' & = \xi_{ns}', \, \, \, s \in [j-1], \\
\zeta_{ij} & = \xi_{ij}, \, \, \, 0 \le i \le n, \\
\zeta_{ij}' & = \xi_{ij}', \, \, \, 0 \le i \le n,
\end{align*} it is clear that the collection of all $\zeta_{is}$'s and $\zeta_{is}'$'s are algebraically independent over $\KK$ and they generate $\EE$. By construction, we have that $\overline{L}_s' \overline{R}[t] = \overline{L}_s$, where $\overline{L}_s' \overline{R}[t]$ is the ideal generated by $\overline{L}_s'$ in $\overline{R}[t]$. Then  
\begin{align*}
\overline{J}_j & = \overline{L}_1 \cdots \overline{L}_{j-1} \\
& =  (\overline{L}_1'\overline{R}[t]) \cdots (\overline{L}_{j-1}'\overline{R}[t]) \\
& = (\overline{L}_1' \cdots \overline{L}_{j-1}')\overline{R}[t] \\
& = (\overline{L}_1' \cap \cdots \cap \overline{L}_{j-1}')\overline{R}[t] \\
& = (\overline{L}_1'\overline{R}[t]) \cap \cdots \cap (\overline{L}_{j-1}'\overline{R}[t]) \\
& = \overline{L}_1 \cap \cdots \cap \overline{L}_{j-1},
\end{align*} where for the fifth equality we used that the ring extension $\overline{R}'[t] \hookrightarrow \overline{R}[t]$, induced by the field extension $\EE' \hookrightarrow \EE$, is flat. Identifying $R[t] / (J_j + (\ell_j))$ with $\overline{R}[t] / \overline{J}_j$, we have by what we just proved that $x_0 + t \ell_j'$ is $R[t] / (J_j + (\ell_j))$-regular if and only if the class of $x_0 + t \ell_j'$ in $\overline{R}[t]$ does not belong to any of the $\overline{L}_s$'s for $s \in [j-1]$. Thus for the sake of a contradiction, suppose that $\overline{x_0 + t \ell_j'} \in \overline{L}_s$ for some $s \in [j-1]$. This implies that 
\begin{align*}
x_0 + t \left(\sum_{i=0}^{n-1} \zeta_{ij}'x_i \right) = c \left( \sum_{i=0}^{n-1} \zeta_{is}x_i \right) + c' \left[x_0 + t \left(\sum_{i=0}^{n-1} \zeta_{is}'x_i \right)  \right],
\end{align*} for some $c, \, c' \in \overline{R}[t]$, where the $\zeta_{ij}'$'s for $0 \le i \le n-1$ are now defined as 
\begin{align*}
\zeta_{ij}' := \xi_{ij}' - \xi_{nj}^{-1} \xi_{ij} \xi_{nj}'.
\end{align*} As the left-hand-side has degree $1$, we may assume that $c, \, c' \in \EE[t]$. As before, let us write $c = c_0 + t c_1$ and $c' = c_0' + t c_1'$ with $c_0, \, c_0' \in \EE$ and $c_1, \, c_1' \in \EE[t]$. Isolating in both sides of the above relation the component of the coefficient of $x_1$ that lies in $\EE$ (which is zero in the left-hand-side), we see that $c_0 = 0$. Doing the same for the component of the coefficient of $x_0$ that lies in $\EE$ and using that $c_0 =0$, we obtain $c_0' = 1$. As $n-1 \ge 2$, we further isolate the component 
of the coefficients of $x_1$ and $x_2$ that is divisible by $t$, to respectively obtain 
\begin{align*}
\zeta_{1j}' &= c_1 \zeta_{1s} +  \zeta_{1s}' + tc_1'\zeta_{1s}', \\
\zeta_{2j}' &= c_1 \zeta_{2s} + \zeta_{2s}' + tc_1'\zeta_{2s}'.
\end{align*} Eliminating $c_1$, obtain that $tc_1' \in \EE$. This is however a contradiction, as $t c_1'$ is transcendental over $\EE$.

We have shown that $L_j$ is generated by an $R[t]/J_j$-sequence. Hence, 
\begin{align*}
\frac{J_j \cap L_j}{J_j L_j} = {\Tor}_1^{R[t]}\left(\frac{R[t]}{J_j}, \frac{R[t]}{L_j} \right) = 0.
\end{align*} Consequently, 
\begin{align*}
J = J_j L_j = J_j \cap L_j  = \big(L_1 \cap \cdots \cap L_{j-1} \big) \cap L_j,
\end{align*} where the first equality is by definition, the second equality is by the vanishing of Tor that we just showed, and the third equality is by the induction hypothesis on $j$.
\end{proof}

With the aid of Proposition \ref{prp:J-cap=prod} we next establish a primary decomposition of $J$ when $j \ge n$. Denote by $\mf_{R[t]}$ the ideal of $R[t]$ generated by $x_0,\dots,x_n$. 

\begin{proposition} \label{prp:Jj-j>=n}
For any $n \ge 0$ and $j \ge n$, we have $J = I \cap \mf_{R[t]}^j$. 
\end{proposition}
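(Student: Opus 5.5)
The inclusion $J \subseteq I \cap \mf_{R[t]}^j$ is immediate. $J \subseteq L_s$ for every $s$, and $J$ is generated by products of $j$ linear forms, so $J \subseteq \mf_{R[t]}^j$. For the reverse inclusion I will work degree by degree over the principal ideal domain $\EE[t]$. Since $[I\cap \mf_{R[t]}^j]_d = 0$ for $d<j$ and $[I\cap \mf_{R[t]}^j]_d=[I]_d$ for $d \ge j$, it suffices to prove $[I]_d = [J]_d$ for every $d \ge j$. Here $[\,\cdot\,]_d$ denotes the $\EE[t]$-submodule of the finitely generated free module $[R[t]]_d$.

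\textbf{Step 1: the quotient is torsion.} After tensoring with $\EE(t)$, $J$ becomes the product ideal of $\V' = (\Span(\ell_s,x_0+t\ell_s'))_{s\in[j]}$ in $R(t)$. As in Lemma \ref{lem:rank*-Vc}, $\V'$ is linearly general, so Proposition \ref{prp:primary-decomposition-linearly-general} gives $JR(t) = IR(t)\cap \mf_{R(t)}^j$. Localization commutes with finite intersections, so $[I]_d\otimes\EE(t) = [J]_d\otimes \EE(t)$ for $d\ge j$. Hence $[I]_d/[J]_d$ is a torsion $\EE[t]$-module.

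\textbf{Step 2: $[R[t]/J]_d$ is free.} For a finitely generated $\EE[t]$-module $N$, freeness is equivalent to $\dim_{\kappa(u)} N\otimes\kappa(u)$ being the same at the generic point and at every closed point $u$. By base change, and because the statement is about $\EE[t]$-module structure, I may first pass to the algebraic closure of $\EE$ as in \S\ref{subsection:flat-family}; then the closed points are $t=e$ with $e\in\EE$. Since tensor product is right exact, $[R[t]/J]_d\otimes \EE[t]/(t-e) = [R/J_e]_d$, where $J_e$ is the product ideal of the arrangement obtained by substituting $t=e$.
\begin{itemize}
\item For $e=0$ this arrangement is $(\Span(x_0,\ell_s))_{s\in[j]}$, and the computation of Lemma \ref{lem:rank*-V0} (with no indices $>j$) gives $\rk^* (A)= \min\{|A|,n\}$.
\item For $e\neq 0$, the argument of \S\ref{subsubsection:flat-family-first-proof} (at most one exceptional $A^\dagger$, losing one in rank) gives the same $\rk^*$.
\item At the generic point, Lemma \ref{lem:rank*-Vc} gives the same $\rk^*$ again.
\end{itemize}
By Proposition \ref{prp:Betti}, all these product ideals have the same Betti numbers, hence the same Hilbert function. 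So $\dim [R[t]/J]_d\otimes\kappa(u)$ is independent of $u$, and $[R[t]/J]_d$ is free.

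\textbf{Step 3: conclusion.} By Step 1, $[I]_d/[J]_d$ is a torsion submodule of $[R[t]]_d/[J]_d = [R[t]/J]_d$. By Step 2 that module is free, so $[I]_d/[J]_d=0$ for every $d\ge j$, which is the claim. Note that the hypothesis $j\ge n$ is not actually used in this argument, which suggests the statement holds for all $j$. This is consistent with Proposition \ref{prp:J-cap=prod}: for $j<n$ one has $I\subseteq\mf^{j}_{R[t]}$ trivially, so there $I\cap\mf^j_{R[t]}=I$.

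\textbf{Main obstacle.} I expect the hard part to be Step 2. One must check carefully that the specialized ideal $J_e$ really is the product ideal of the specialized arrangement, which holds because specialization commutes with products of generated ideals. One must also check that the rank-function computations of \S\ref{subsubsection:flat-family-first-proof}, there carried out for the full arrangement with indices $>j$, restrict correctly to the subarrangement indexed by $[j]$. That argument concerns distinct roots of maximal minors, and it only concerns subsets $A$, so restricting to subsets of $[j]$ should be harmless. If this becomes delicate, the fallback is to induct on $j$ from the case $j=n-1$ of Proposition \ref{prp:J-cap=prod}. The induction would write $J=J_jL_j$ and compare $J_j L_j$ with $(I_j\cap\mf_{R[t]}^{j-1})\cap L_j$ using the regularity of $\ell_j,\,x_0+t\ell_j'$ on the components of $J_j$ other than $\mf_{R[t]}$. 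This route looks messier, because the $\mf$-primary component obstructs the $\Tor$-vanishing trick, so I prefer the flatness argument above.
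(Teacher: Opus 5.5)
Your route differs from the paper's. The paper proves this by the Conca--Herzog hyperplane-section induction: it picks $x=\ell_1+\cdots+\ell_j\notin\bigcup_s L_s$, reduces modulo $x$ to $n-1$, and lifts using the decompositions of the $J_{\hat s}$. You instead combine generic linear generality, which makes $[I]_d/[J]_d$ torsion, with constancy of the fibre Hilbert functions, which makes $[R[t]/J]_d$ free. This inverts the paper's logic. There the proposition is used to \emph{prove} flatness in the second proof of Lemma~\ref{lem:flat-family}, whereas you deduce it from the first proof, so the second proof would no longer be independent of the first. Your Steps 1--3 are sound in three cases:
\begin{itemize}
\item $n=0$, where the statement is trivial;
\item $n=2$, where $2|A|=n+1$ is impossible, so every fibre with $e\neq 0$ is linearly general;
\item $n\ge 3$, where every exceptional set has $|A^\dagger|=(n+1)/2\ge 2$.
\end{itemize}

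\textbf{The gap is $n=1$.} The statement includes this case, and your closing remark claims it, but Step 2 is false there. For each $s$, the coefficient determinant of $\ell_s$ and $x_0+t\ell_s'$ is a degree-one polynomial in $t$ with a nonzero root $e_s\in\EE$; these roots are distinct for distinct $s$. At $t=e_s$ the two forms become proportional, so $(\ell_s,x_0+e_s\ell_s')=(\ell_s)$. Thus $A^\dagger=\{s\}$ is a singleton, the inequality $2|A^\dagger|-2\ge |A^\dagger|$ used in the first proof fails, and ${\rk}^*_{\V_{\xi,e_s}}(\{s\})=0\neq 1$. Concretely, $J_{e_s}=\ell_s\,\mf_R^{j-1}$, so $\dim_{\EE}[R/J_{e_s}]_d=1$ for every $d\ge j$. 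The generic fibre $JR(t)=\mf_{R(t)}^j$ gives $0$ instead. Hence $[R[t]/J]_d\cong\bigoplus_s \EE[t]/(t-e_s)$ is torsion rather than free, and Step 3 cannot be run.

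This is not a removable artifact. For $n=1$ one has $L_s=\mf_{R[t]}\cap(\ell_s,\,1+\delta_s t)$ for some $0\neq\delta_s\in\EE$, so $L_s$ is not prime, and $[R[t]/I]_d$ is itself torsion for $d\ge 1$. The statement is still true for $n=1$, but it needs a separate argument. One option is to localize at each maximal ideal $\mathfrak p$ of $\EE[t]$:
\begin{itemize}
\item If $\mathfrak p=(t-e_s)$, every $L_r$ with $r\neq s$ becomes $\mf$. Then both $J$ and $I\cap\mf^j$ become $L_s$ in degrees $d\ge j$, since $[L_s\mf^{j-1}]_d=[L_s]_d$ there.
\item At every other $\mathfrak p$, both localize to $\mf^j$.
\end{itemize}
Another option is the paper's reduction modulo $x$ to the trivial case $n=0$.
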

\begin{proof}
When $n=0$, every $L_s$ is the ideal generated by $x_0$, and the statement holds trivially. For $n \ge 1$, we induct on $j \ge n$, by adapting as needed the ingenious idea of the proof of Lemma 3.2 in \cite{conca2003castelnuovo}. For $n=1$ the base of the induction is for $j=1$, which is trivial. Hence to complete the proof, we need to establish the base of the induction ($j=n$) for $n \ge 2$ and the induction step ($j \ge n+1$) for $n \ge 1$. We will treat these simultaneously. For any $s \in [j]$ set 
\begin{align*}
J_{\hat{s}} = \prod_{i \in [j] \setminus \{s\}} L_i.
\end{align*}  In view of Proposition \ref{prp:J-cap=prod} or the induction hypothesis on $j$, we have the respective primary decomposition
\begin{align*}
J_{\hat{s}} = \begin{cases}
\cap_{i \neq s} L_i, & j = n \ge 2 \\
\left(\cap_{ i \neq s} L_i \right) \cap \mf_{R[t]}^{j-1}, & j \ge n+1.
\end{cases}
\end{align*} Regardless, it will suffice to prove (the crucial inclusion is $ \supseteq$)
\begin{align*}
J = J_{\hat{1}} \cap \cdots \cap J_{\hat{j}} \cap \mf_{R[t]}^j. 
\end{align*}  

Consider the linear form
\begin{align*}
x = \ell_1 + \cdots + \ell_j.
\end{align*} We claim that $x$ does not belong to any $L_s$. If not, then 
\begin{align*}
\sum_{i=0}^n \left(\sum_{f=1}^j \xi_{if} \right) x_i = c \left(\sum_{i=0}^n \xi_{is} x_i \right) + c' \left[x_0 + t \left(\sum_{i=0}^n \xi_{is}' x_i \right) \right]
\end{align*} for some $s \in [j]$, where $c, \, c' \in \EE[t]$. Write $c = c_0+tc_1$ and $c' = c_0' + tc_1'$ with $c_0, \, c_0' \in \EE$ and $c_1, \, c_1' \in \EE[t]$. Then from the above relation we extract the equations
\begin{align*}
\xi_{01} + \cdots + \xi_{0j} & = c_0 \xi_{0s} + c_0', \\
\xi_{11} + \cdots + \xi_{1j} & = c_0 \xi_{1s}, \\ 
0 & = c_1 \xi_{0s} + c_0' \xi_{0s}' + tc_1' \xi_{0s}' +c_1', \\
0 & = c_1 \xi_{1s} + c_0' \xi_{1s}' + tc_1' \xi_{1s}'. 
\end{align*} Multiplying the third equation with $\xi_{1s}$, the fourth with $\xi_{0s}$ and subtracting, yields
\begin{align*}
0 = c_0'(\xi_{0s}'\xi_{1s}-\xi_{1s}'\xi_{0s}) + tc_1'(\xi_{0s}'\xi_{1s}-\xi_{1s}'\xi_{0s})+c_1' \xi_{1s}.
\end{align*} Suppose $c_1' \neq 0$ and write $u$ for its leading term. Viewing the right-hand-side as a polynomial in $t$, and since $c_0' \in \EE$, we see that the leading term of the right-hand-side is $tu(\xi_{0s}'\xi_{1s}-\xi_{1s}'\xi_{0s}) \neq 0$. As the left-hand-side is zero, this is a contradiction, so that $c_1'$ must be zero. But then $c_0'$ must be zero as well. However, a new contradiction now arises  from the first two equations, on the account that $j \ge 2$. This concludes the proof that $x \not\in L_s$ for any $s \in [j]$. 

Let us use the bar notation to indicate modulo $x$. The induction hypothesis on $n$ gives the equality of ideals
\begin{align*}
\overline{J} = \overline{J}_{\hat{1}} \cap \cdots \cap \overline{J}_{\hat{j}} \cap \overline{\mf}_{\overline{R}[t]}^j 
\end{align*} in the ring $\overline{R}[t] = R[t]/(x)$ (one applies the induction hypothesis over $\EE'[t][x_0,\dots,x_{n-1}]$ where $\EE'$ is a subfield of $\EE$ and then extends the decomposition to $R[t]$, as was done in the proof of Proposition \ref{prp:J-cap=prod}). 

Let $f$ be a homogeneous element of $R[t]$ of degree at least $j$ that belongs to every $J_{\hat{s}}$; that is $f \in J_{\hat{1}} \cap \cdots \cap J_{\hat{j}} \cap \mf_{R[t]}^j$. By the equality in the previous paragraph we have $\overline{f} \in \overline{J}$, so that $f = g + x f'$, with $g \in J$ and $f'$ homogeneous of degree at least $j-1$. It follows that $x f' \in J_{\hat{s}}$ for every $s \in [j]$. As $x \not\in L_s$ for any $s$ and the degree of $f'$ is at least $j-1$, the aforementioned primary decomposition of $J_{\hat{s}}$ furnishes $f' \in J_{\hat{s}}$ for every $s \in [j]$. As $\ell_s \in L_s$, this yields $\ell_s f' \in J$, so $x f' \in J$, and thus $f \in J$. 
\end{proof} 

Set $L_s = (\ell_s, \ell_s')$ for every $j+1 \le s \le k$; then by definition
\begin{align*}
J_{\xi,t} = L_1 \cdots L_k.
\end{align*} Recall from the induction hypothesis in the proof of Theorem \ref{thm:main} (\S \ref{subsection:induction-hypothesis}) that $j \ge 2n-2$ and $n \ge 3$; in particular $j \ge n+1$. Hence the proof of Lemma \ref{lem:flat-family} is finally concluded in view of the next proposition.

\begin{proposition}
Suppose $j \ge n+1$. Then $$J_{\xi,t} = L_1 \cap \cdots \cap L_k \cap \mf_{R[t]}^k.$$
\end{proposition}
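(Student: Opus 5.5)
We plan to prove the decomposition by induction on the number $k-j$ of "generic" factors $L_s=(\ell_s,\ell_s')$, $s>j$. The base case $k=j$ is exactly Proposition \ref{prp:Jj-j>=n}. For the induction step we mimic that proof, which in turn adapts Lemma 3.2 of \cite{conca2003castelnuovo}. For $s\in[k]$ set $J_{\hat s}=\prod_{i\in[k]\setminus\{s\}}L_i$. Removing an index $s\le j$ leaves $j-1\ge n$ special factors. That case is covered by the induction hypothesis if $j-1\ge n+1$, and otherwise by a version of Proposition \ref{prp:Jj-j>=n} with generic factors appended, which we would also carry along in the induction; so we run the induction on the pair $(n,k)$ with $j\ge n$ only. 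Removing an index $s>j$ reduces $k-j$. In every case
\[
J_{\hat s}=\Big(\bigcap_{i\ne s}L_i\Big)\cap \mf_{R[t]}^{k-1}.
\]
The inclusion $J_{\xi,t}\subseteq L_1\cap\cdots\cap L_k\cap\mf^k_{R[t]}$ is trivial. It then suffices to show
\[
J_{\xi,t}\supseteq J_{\hat 1}\cap\cdots\cap J_{\hat k}\cap \mf_{R[t]}^k,
\]
since the right-hand side equals $\bigcap_i L_i\cap\mf^k_{R[t]}$.

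For the reverse inclusion we take $x=\ell_1+\cdots+\ell_k$ and first check that $x\notin L_s$ for every $s$. For $s\le j$ this is the same coefficient-extraction argument as in Proposition \ref{prp:Jj-j>=n}: separate the $t$-degree-$0$ and $t$-divisible parts of the coefficients of $x_0,x_1$. For $s>j$ it is immediate from algebraic independence, because $x$ involves $\xi$'s from other indices that cannot appear in $\mathrm{span}_{\EE[t]}(\ell_s,\ell_s')$ (here $k\ge2$).

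Next we pass to $\overline R[t]=R[t]/(x)$. As in the proof of Proposition \ref{prp:J-cap=prod}, we eliminate a variable, rewrite the images of the $L_s$ over a subfield $\EE'$ generated by new algebraically independent $\zeta$'s, and apply the induction hypothesis on $n$, using flat base change $\EE'\subset\EE$. This gives
\[
\overline{J_{\xi,t}}=\bigcap_s\overline{J_{\hat s}}\cap\overline{\mf}^k.
\]
The main obstacle is checking that the images still have the required shape. The special factors must stay of the form $(\mu_s,x_0+t\mu_s')$ with generic coefficients, which needs $x$ to be chosen so that $x_0$ survives; we can eliminate $x_n$ rather than $x_0$. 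The hypothesis $j\ge n$ still holds after the dimension drops to $n-1$.

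Finally, let $f$ be homogeneous of degree at least $k$ and lie in all the $J_{\hat s}$. Then $f=g+xf'$ with $g\in J_{\xi,t}$ and $\deg f'\ge k-1$, so $xf'\in J_{\hat s}$ for every $s$. Since $x$ avoids every associated prime $L_i$ of $J_{\hat s}$, and $\mf^{k-1}$ imposes no condition in degree $\ge k-1$, the primary decomposition above gives $f'\in J_{\hat s}$. Hence $\ell_sf'\in L_sJ_{\hat s}=J_{\xi,t}$ for each $s$, so $xf'=\sum_s \ell_sf'\in J_{\xi,t}$, and therefore $f\in J_{\xi,t}$.

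Flatness then follows. Each $L_s$ is prime with $R[t]/L_s$ a polynomial ring over $\EE[t]$, hence $t$-torsion-free. The ideal $\mf^k_{R[t]}$ is $\mf$-primary with $R[t]/\mf^k$ free over $\EE[t]$. So $R[t]/J_{\xi,t}$ embeds into the direct sum of these torsion-free quotients and is torsion-free over $\EE[t]$, which is what the second proof of Lemma \ref{lem:flat-family} requires.
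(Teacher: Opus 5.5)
Your strategy is the paper's: a Conca--Herzog type induction with $x=\ell_1+\cdots+\ell_k$, reduction modulo $x$, and the lifting $f=g+xf'$. You are right that the last step, $xf'=\sum_{s\in[k]}\ell_sf'\in J_{\xi,t}$, needs $f'\in J_{\hat s}$ for \emph{every} $s$, including the special indices $s\le j$. The paper only invokes $J_{\hat s}$ for $s>j$ (induction on $k-j$ with $j$ fixed) and passes over the summands with $s\le j$.

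Your repair, however, does not close. You restrict the induction to $j\ge n$, but the case $j=n$ is forced on you as soon as $j=n+1$, because removing a special index then leaves $n$ special factors and at least one generic one. For $j=n<k$ the same step needs $J_{\hat s}$ with $s\le n$. That is a product of $n-1$ special and $k-n\ge 1$ generic factors, with $k-1\ge n$ factors in total. It is covered neither by Proposition~\ref{prp:J-cap=prod} (only special factors, fewer than $n$ of them) nor by Proposition~\ref{prp:Jj-j>=n}, and it lies outside your range $j\ge n$. Continuing downward in $j$ eventually requires configurations with at most $n$ factors. So ``in every case $J_{\hat s}=(\bigcap_{i\ne s}L_i)\cap\mf_{R[t]}^{k-1}$'' is unproved precisely where you use it.

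There is a second gap at the step you single out as the main obstacle, and eliminating $x_n$ does not settle it. Modulo $x$ one has $\sum_s\bar\ell_s=\bar x=0$, so the new coefficients are not algebraically independent, and the inductive hypothesis in $n$ does not apply as stated. Since $\bar L_s$ depends on $\bar\ell_s$ only up to a nonzero scalar, you can rescale. When $k\ge n+1$, as in your induction step, a transcendence-degree count shows the rescaled data are again generic. This has to be argued; the paper is equally terse here.

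For $k<n$ the reduction genuinely fails. Take $n=3$ and $k=j=2$: the reduced factors are $(\bar\ell_1,x_0+t\bar\ell_1')$ and $(\bar\ell_1,x_0+t\bar\ell_2')$. Their product specializes at $t=0$ to $(x_0,\bar\ell_1)^2$, a double point of degree $3$ in $\PP^2$, whereas the generic fiber is two reduced points. So the reduced product has $t$-torsion and is not $\bar L_1\cap\bar L_2\cap\overline{\mf}^2$.

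Hence the auxiliary statements from the first paragraph cannot simply be fed through the same machinery. For fewer than $n$ factors you would need, e.g., the regular-sequence argument of Proposition~\ref{prp:J-cap=prod} extended to generic factors, and exactly $n$ factors needs its own justification.

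Alternatively, you can bypass the induction entirely. The first proof of Lemma~\ref{lem:flat-family} gives the same Betti numbers on every fiber, so each graded piece of $R[t]/J_{\xi,t}$ is free over $\EE[t]$, and therefore $J_{\xi,t}=J_{\xi,t}R(t)\cap R[t]$. Contracting the decomposition of Lemma~\ref{lem:Jc-primary-decomposition} then yields the formula, using that $R[t]/L_i$ and $R[t]/\mf_{R[t]}^k$ are $\EE[t]$-torsion-free.
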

\begin{proof}
We induct on $n \ge 0$. For $n=0$ every $L_i$ is the ideal generated by $x_0$ and the statement is trivial. For $n \ge 1$, we induct on $k-j \ge 0$. The base of the induction is $k-j = 0$, which is taken care of by Proposition \ref{prp:Jj-j>=n}. For $k-j>0$ we apply the same method as in the proof of Proposition \ref{prp:Jj-j>=n} (which is an adaptation of the method in the proof of Lemma 3.2 in \cite{conca2003castelnuovo}). In particular, setting for any $j+1 \le s \le k$ 
\begin{align*}
J_{\xi,t,\widehat{s}} = L_1 \cdots L_{s-1} L_{s+1} \cdots L_k,
\end{align*} the induction hypothesis on $k-j$ furnishes
\begin{align*}
J_{\xi,t,\widehat{s}} = L_1 \cap \cdots \cap L_{s-1}  \cap L_{s+1} \cap \cdots \cap L_k \cap  \mf_{R[t]}^{k-1}.
\end{align*} Hence it suffices to prove that (the crucial inclusion is $\supseteq$)
\begin{align*}
J_{\xi,t} = J_{\xi,t,\widehat{j+1}} \cap \cdots \cap J_{\xi,t,\widehat{k}} \cap \mf_{R[t]}^k.
\end{align*} Consider the linear from
\begin{align*}
x = \ell_1 + \cdots + \ell_k.
\end{align*} A similar argument as in the proof of Proposition \ref{prp:Jj-j>=n} shows that $x \not\in L_s$ for any $s \in [k]$. Let $f \in R[t]$ be homogeneous of degree at least $k$, such that $f \in J_{\xi,t,\hat{s}}$ for every $j+1 \le s \le k$. Going modulo $x$, the induction hypothesis on $n$ gives $f = g + xf'$ with $g \in J_{\xi,t}$ and $f'$ homogeneous of degree at least $k-1$. As $J_{\xi,t} \subseteq J_{\xi,t,\hat{s}}$, we have $xf' \in J_{\xi,t,\hat{s}}$, hence $f' \in J_{\xi,t,\hat{s}}$, hence $xf' \in J_{\xi,t}$, and finally $f \in J_{\xi,t}$.
\end{proof}

\subsection{A star configuration and its hyperplane section} \label{subsection:star-configuration-hyperplane-section}

With $c \le n-1$, we consider the ideals of $R$
\begin{align*}
A_{\xi,c,j} &= \bigcap_{1 \le \alpha_1 < \cdots < \alpha_c \le j} (\ell_{\alpha_1},\dots,\ell_{\alpha_c}), \, \, \, \text{and} \\
B_{\xi,c,j} &= \bigcap_{1 \le \alpha_1 < \cdots < \alpha_c \le j} (x_0,\ell_{\alpha_1},\dots,\ell_{\alpha_c}).
\end{align*} The ideals $A_{\xi,c,j}$ and $B_{\xi,c,j}$ define in $\PP_{\EE}^n$ and $\PP_{\EE}^{n-1} \cong \Proj\big(R/(x_0)\big)$ respectively, a codimension $c$ star configuration of $j$ hyperplanes. To relate the two ideals, we need the following folklore criterion. 

\begin{lemma} \label{lem:comparison-I-J}
Let $J \subseteq I$ be two homogeneous ideals of $R$ such that 
\begin{enumerate}[label = (\roman*)]
\item $J$ and $I$ have the same height,
\item $R/J$ and $R/I$ have the same multiplicity, and 
\item for any $P \in \Ass(R/J)$, we have $\height(P) = \height(J)$. 
\end{enumerate}Then $J = I$. 
\end{lemma}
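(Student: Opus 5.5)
The idea is to compare $J$ and $I$ through their minimal primes and the associativity formula for multiplicity. Write $h = \height(J) = \height(I)$. By (iii), every associated prime of $R/J$ has height $h$. So $J$ is unmixed, $\Ass(R/J)$ is exactly the set of minimal primes of $J$, and the primary decomposition is
\[ J = \bigcap_{P \in \Ass(R/J)} Q_P , \]
with the components $Q_P$ uniquely determined.

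First I would check that every height-$h$ minimal prime $P$ of $I$ is also a minimal prime of $J$. Since $J \subseteq I \subseteq P$, the prime $P$ contains some minimal prime $P'$ of $J$. We have $\height(P') = h = \height(P)$, so $P' = P$.

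Next I would apply the associativity formula for multiplicity:
\[ e(R/J) = \sum_{\height P = h} \lambda\big(R_P/J_P\big)\, e(R/P), \qquad e(R/I) = \sum_{\height P = h} \lambda\big(R_P/I_P\big)\, e(R/P). \]
Both sums run over primes of height $h$, which is the codimension of the top-dimensional components for both ideals, since $\dim R/J = \dim R/I$ by (i). For each such $P$, the inclusion $J_P \subseteq I_P$ gives $\lambda(R_P/I_P) \le \lambda(R_P/J_P)$. Each summand for $I$ is therefore at most the corresponding summand for $J$. Since the totals agree by (ii), every summand must be equal: $\lambda(R_P/J_P) = \lambda(R_P/I_P)$ for every height-$h$ prime $P$.

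Equal finite lengths together with $J_P \subseteq I_P$ force $J_P = I_P$ for all $P \in \Ass(R/J)$. Now take $f \in I$. For each $P \in \Ass(R/J)$ we get $f \in I_P = J_P$, and $J_P \cap R = Q_P$ because $P$ is minimal over $J$. Hence $f \in \bigcap_P Q_P = J$, so $I \subseteq J$ and $J = I$.

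The one delicate point is the step in the associativity formula where we use $\dim R/J = \dim R/I$ from (i). This is what guarantees that both multiplicities are computed over the same set of primes, so that the termwise comparison is legitimate. The rest is routine.
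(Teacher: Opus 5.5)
Your argument is correct and is essentially the paper's: the paper only says that one should show $J$ and $I$ have the same primary decomposition, and your associativity-formula comparison giving $J_P = I_P$ at every (necessarily minimal) $P \in \Ass(R/J)$, followed by $I \subseteq \bigcap_P (J_P \cap R) = J$, is a rigorous execution of that sketch. Your first step, that height-$h$ minimal primes of $I$ are minimal over $J$, is harmless but is not actually used in the rest of the proof.
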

\begin{proof}
The proof is straightforward once one sets to argue that $J$ and $I$ must have the same primary decomposition.
\end{proof}

\begin{lemma} \label{lem:star-configuration-hyperplane-section}
$B_{\xi,c,j} = A_{\xi,c,j} + (x_0)$.
\end{lemma}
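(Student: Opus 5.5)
The plan is to apply Lemma \ref{lem:comparison-I-J} with $J = A_{\xi,c,j} + (x_0)$ and $I = B_{\xi,c,j}$. The inclusion $J \subseteq I$ is immediate: $x_0$ lies in every component $(x_0,\ell_{\alpha_1},\dots,\ell_{\alpha_c})$, and $A_{\xi,c,j} \subseteq (\ell_{\alpha_1},\dots,\ell_{\alpha_c}) \subseteq (x_0,\ell_{\alpha_1},\dots,\ell_{\alpha_c})$ for every $c$-subset. Since $c \le n-1$ and the $\ell_i$ are generic, the forms $x_0,\ell_{\alpha_1},\dots,\ell_{\alpha_c}$ are linearly independent (a nonvanishing-minor argument as in Lemma \ref{lem:rank*-Vc}). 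Hence $I$ is an intersection of height $c+1$ linear primes, and $\height(I) = c+1$.

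For condition (iii) and the height of $J$, we use Proposition \ref{prp:star-configurations}(i): $R/A_{\xi,c,j}$ is Cohen--Macaulay of dimension $n+1-c$. We first check that $x_0$ is a nonzerodivisor on $R/A_{\xi,c,j}$. Since $A_{\xi,c,j}$ is an intersection of the primes $(\ell_{\alpha_1},\dots,\ell_{\alpha_c})$, these are its only associated primes, and $x_0$ lies in none of them by genericity (again, the $c+1 \le n$ forms are independent). Then $R/J = R/(A_{\xi,c,j}+(x_0))$ is Cohen--Macaulay of dimension $n-c$, so $\height(J) = c+1 = \height(I)$. Because Cohen--Macaulay rings are unmixed, every associated prime of $R/J$ has height $c+1$, which gives (iii).

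For the multiplicities, the Hilbert series of a quotient by a regular linear form is multiplied by $(1-z)$, so the $h$-vector of $R/J$ equals the $h$-vector of $R/A_{\xi,c,j}$. By Proposition \ref{prp:star-configurations}(iv) its sum is $\sum_{i=c-1}^{j-1}\binom{i}{c-1} = \binom{j}{c}$, hence $e(R/J) = \binom{j}{c}$. On the other side, $I$ is the radical ideal of $\binom{j}{c}$ distinct linear spaces of the same codimension $c+1$. They are distinct because the $c$-subsets give different spans, again by linear generality of $x_0,\ell_1,\dots,\ell_j$ in enough dimensions. So $e(R/I) = \binom{j}{c}$ by additivity of multiplicity over minimal primes of a reduced equidimensional scheme. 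Alternatively, one can note that $B_{\xi,c,j}$ is itself a star configuration in $\PP^{n-1}$ and apply Proposition \ref{prp:star-configurations}(iv) in $R/(x_0)$. Lemma \ref{lem:comparison-I-J} then yields $J = I$.

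The only step requiring any care is the genericity bookkeeping. We need $x_0$ to avoid every prime $(\ell_{\alpha_1},\dots,\ell_{\alpha_c})$, and the $(c+1)$-tuples $\{x_0,\ell_{\alpha}\}$ to be independent with pairwise distinct spans. This needs $c+1 \le n+1$, which holds since $c \le n-1$, and we handle it exactly as the minor computations in Lemma \ref{lem:rank*-Vc}, using algebraic independence of the $\xi$'s. We do not expect a real obstacle; the Cohen--Macaulay input from Proposition \ref{prp:star-configurations} carries the argument.
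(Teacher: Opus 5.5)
Your proposal is correct and follows the paper's proof essentially step for step. You apply Lemma~\ref{lem:comparison-I-J} to $A_{\xi,c,j}+(x_0)\subseteq B_{\xi,c,j}$, match heights, match multiplicities via the $h$-vectors of Proposition~\ref{prp:star-configurations}(iv), and get unmixedness from the Cohen--Macaulayness of $R/A_{\xi,c,j}$ with $x_0$ regular on it; your extra bookkeeping (the explicit multiplicity $\binom{j}{c}$, distinctness of the components of $B_{\xi,c,j}$, and passing the $h$-vector through the regular element $x_0$) just makes explicit what the paper leaves implicit.
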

\begin{proof}
Noting that $A_{\xi,c,j} + (x_0) \subseteq B_{\xi,c,j}$, we apply the criterion of Lemma \ref{lem:comparison-I-J}. It is clear that 
\begin{align*}
\height \big(A_{\xi,c,j} + (x_0)\big) = \height(B_{\xi,c,j}) = c+1.
\end{align*} Next, both $R/A_{\xi,c,j}$ and $R/B_{\xi,c,j}$ have the same multiplicity, because they have the same $h$-vector by Proposition \ref{prp:star-configurations}(iv). Thus the first two conditions of Lemma \ref{lem:comparison-I-J} are satisfied. For the third condition, note that $x_0$ is $R/A_{\xi,c,j}$-regular because $c \le n$. As by Proposition \ref{prp:star-configurations}(i) $R/A_{\xi,c,j}$ is Cohen-Macaulay, $R/\big(A_{\xi,c,j}+(x_0)\big)$ will also be Cohen-Macaulay. In particular, for any $P \in \Ass \left(R/\big(A_{\xi,c,j}+(x_0)\big) \right)$, we will have $\height(P) = c+1$. Thus the third condition of Lemma \ref{lem:comparison-I-J} is also satisfied.
\end{proof}

\subsection{Returning to $S$}

By Lemma \ref{lem:Jc-primary-decomposition}, the saturation of the ideal $J_{\xi,t} R(t)$ is the ideal of $R(t)$ given by
\begin{align*}
I_{\xi,t}  :=(\ell_1,x_0+t \ell_1') \cap \cdots \cap (\ell_j,x_0+t \ell_j') \cap (\ell_{j+1},\ell_{j+1}) \cap \cdots \cap (\ell_k, \ell_k'). 
\end{align*} This is the saturated ideal that defines in the projective space $\PP_{\EE(t)}^n$ the reduced union of the linear spaces 
\begin{align*}
&\Proj\big(R(t) / (\ell_i,x_0+t \ell_i'\big), \, i \in [j], \, \, \, \text{and} \\
&\Proj\big(R(t) / (\ell_i,\ell_i')\big), \, i \in [k] \setminus [j].
\end{align*} Similarly, by Lemma \ref{lem:Jc-primary-decomposition}, the saturation of $J_{\xi,0}$ is the ideal of $R$ given by 
\begin{align*}
I_{\xi,0} &= \left(\bigcap_{c \in [n-1]} \bigcap_{1 \le \alpha_1 <\cdots <\alpha_{c} \le j} (x_0,\ell_{\alpha_1},\dots,\ell_{\alpha_c})^c \right) \cap \left(\bigcap_{j < i \le k}(\ell_i,\ell_i') \right).
\end{align*} Recalling the definition of the ideal $B_{\xi,c,j}$ from \S \ref{subsection:star-configuration-hyperplane-section}, we have  
\begin{align*}
I_{\xi,0} = \left(\bigcap_{c \in [n-1]} B_{\xi,c,j}^{(c)} \right) \cap \left(\bigcap_{j < i \le k}(\ell_i,\ell_i') \right),
\end{align*} where $B_{\xi,c,j}^{(c)}$ denotes the $c$th symbolic power. Thus geometrically, $I_{\xi,0}$ defines in $\PP_{\EE}^n$ the reduced union of the $k-j$ codimension-$2$ linear spaces
\begin{align*}
\Proj\big(R / (\ell_i,\ell_i')\big), \, i \in [k] \setminus [j],
\end{align*} together with the $c$th symbolic power, for all $c \in [n-1]$, of the codimension-$c$ star configuration associated to the $j$ codimension-$1$ planes
\begin{align*}
\Proj \big(R / (x_0, \ell_i) \big), \, i \in [j]
\end{align*} inside the hyperplane defined by $x_0$. 

Now, quite generally, the Castelnuovo-Mumford regularity of a polynomially parametrized homogeneous ideal (such as $I_{\xi,t}$ and $I_{\xi,0}$; the parameters here being $\xi$) coincides with the regularity of the ideal obtained by substituting the parameters with generic values over the infinite ground field. The argument for this is standard, using i) the Buchsbaum-Eisenbud acyclicity criterion for complexes of finite free modules, ii) the fact that all graded components of the homology modules in the specialized dual graded minimal free resolution achieve their minimal vector space dimension on a dense open subset of the parameter space, and iii) graded local duality. Consequently, 
\begin{align*}
{\reg}_{R(t)} \big(I_{\xi,t}\big) = {\reg}_S (I_X).
\end{align*} Moreover, $\reg_{R}(I_{\xi,0})$ coincides with the regularity of the ideal of $S$
\begin{align*}
Q &= \left(\bigcap_{c \in [n-1]} \bigcap_{1 \le \alpha_1 <\cdots <\alpha_{c} \le j} (x_0,f_{\alpha_1},\dots,f_{\alpha_c})^c \right) \cap \left(\bigcap_{j < i \le k}(f_i,f_i') \right)
\end{align*} where $f_1,\dots,f_j, \, f_1', \dots,f_k'$ are generic linear forms of $S$. 

It follows from Lemma \ref{lem:reg-c<=reg-0} that 
\begin{align*}
{\reg}_S (I_X) \le {\reg}_S(Q). 
\end{align*} The remaining of the proof is devoted to bounding the regularity of $Q$.

\subsection{Traces of residuals}

We develop the machinery that will allow us to express the ideal
\begin{align*}
\left(\overline{Q:x_0^c}\right)^{\sat}: = \left(\frac{Q:x_0^c+(x_0)}{x_0}\right)^{\sat}
\end{align*} in a particularly convenient form. We recall that the saturation index $\sat(J)$ of a homogeneous ideal $J$ of $S$ is the smallest integer $\alpha$ such that $[J]_d  = [J^{\sat}]_d$ for every $d \ge \alpha$.

\begin{lemma} \label{lem:JKLc}
Set $L_c = (f_1,\dots,f_c)$. Then for any two homogeneous ideals $J$ and $K$ of $S$, we have 
\begin{align*}
(J \cap L_c + K)^{\sat} &= (J+K)^{\sat} \cap (L_c+K)^{\sat}, \, \, \, \text{and} \\
\sat(J \cap L_c +K) &\le \max\left\{\sat(K+L_c), \, \reg(J+K)+1 \right\}.
\end{align*}
\end{lemma}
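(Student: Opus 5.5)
The plan is to compare $J\cap L_c+K$ with $(J+K)\cap(L_c+K)$ degree by degree. I will show that they agree in all degrees $\ge \reg(J+K)+1$; both claims then follow. Genericity of $f_1,\dots,f_c$ (relative to $J$ and $K$) is essential here. For example, with $J=(x)$, $K=(x+y)$ and the special form $L=(y)$, the left-hand side saturates to the non-saturated-looking ideal $(x+y,x^2)$, while the right-hand side is $(x,y)$. So I will use that $L_c$ is generated by generic linear forms.

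\emph{Step 1 (modular law).} Since $K\subseteq J+K$, the modular law gives
\begin{align*}
(J+K)\cap(L_c+K) = (J+K)\cap L_c + K .
\end{align*}
Moreover $J\cap L_c+K$ is contained in it. Hence it suffices to show that
\begin{align*}
\big[(J+K)\cap L_c\big]_d \subseteq \big[J\cap L_c + K\big]_d \quad \text{for all } d\ge \reg(J+K)+1 .
\end{align*}

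\emph{Step 2 (Tor/Koszul vanishing).} We have
\begin{align*}
\frac{(J+K)\cap L_c}{(J+K)L_c} \cong \Tor_1^S\big(S/(J+K),\, S/L_c\big) = H_1\big(f_1,\dots,f_c;\, S/(J+K)\big).
\end{align*}
Because $f_1,\dots,f_c$ are generic, they form a filter-regular (almost regular) sequence on $S/(J+K)$. For such sequences the standard estimate says that $H_i(f_1,\dots,f_c;M)$ has finite length and vanishes in degrees $> \reg(M)+i$. With $M=S/(J+K)$ and $\reg(M)=\reg(J+K)-1$, $H_1$ vanishes in degrees $\ge \reg(J+K)+1$. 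I would prove this estimate by induction on $c$, using the long exact sequences for multiplication by $f_i$ on the successive quotients together with $\reg(M/f M)\le \reg(M)$ for $f$ filter-regular. This is the step I expect to be the main obstacle, since the degree bookkeeping has to be done carefully.

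Granting Step 2, in degrees $d\ge\reg(J+K)+1$ we get
\begin{align*}
[(J+K)\cap L_c]_d=[(J+K)L_c]_d\subseteq [JL_c+KL_c]_d \subseteq [J\cap L_c+K]_d .
\end{align*}
This completes the degree comparison of Step 1.

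\emph{Step 3 (saturations).} Ideals that agree in all large degrees have the same saturation. Therefore
\begin{align*}
(J\cap L_c+K)^{\sat}=\big((J+K)\cap(L_c+K)\big)^{\sat}=(J+K)^{\sat}\cap(L_c+K)^{\sat},
\end{align*}
where the last equality holds because saturation commutes with finite intersections. This is the first claim.

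For the saturation index, fix $d\ge \max\{\sat(K+L_c),\reg(J+K)+1\}$. Since $\sat(J+K)\le\reg(J+K)$, in degree $d$ we have
\begin{align*}
[(J+K)^{\sat}\cap(L_c+K)^{\sat}]_d = [(J+K)\cap(L_c+K)]_d ,
\end{align*}
and by Steps 1–2 the latter equals $[J\cap L_c+K]_d$. This gives the bound on $\sat(J\cap L_c+K)$.
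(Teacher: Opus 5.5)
Your proof is correct, but it is organized differently from the paper's. Both arguments control the same obstruction module $\big((J+K)\cap(L_c+K)\big)/(J\cap L_c+K)$; they differ in how they get a handle on it.

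\emph{The paper's route.} It tensors $0\to S/(J\cap L_c)\to S/J\oplus S/L_c\to S/(J+L_c)\to 0$ with $S/K$. This identifies the obstruction with the cokernel of $\Tor_1^S(S/J\oplus S/L_c,S/K)\to\Tor_1^S(S/(J+L_c),S/K)$. It then proves surjectivity in degrees $\ge\reg(J+K)+1$ by factoring the map through $\sigma_1,\dots,\sigma_c$, adjoining one $f_i$ at a time. Each step is an elementwise lifting argument using two facts: $f_i$ is almost regular on $S/(J+L_{i-1}+K)$, and $\sat(J+L_i+K)\le\reg(J+K)$ by Conca--Herzog.

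\emph{Your route.} You use the modular law to show the obstruction is a quotient of the single module $\Tor_1^S\big(S/(J+K),S/L_c\big)=H_1\big(f_1,\dots,f_c;S/(J+K)\big)$. You then apply the standard degree bound for Koszul homology of a filter-regular sequence of linear forms.

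\emph{Comparison.} The underlying ingredients are the same: generic forms are filter-regular, and regularity does not increase under generic hyperplane sections. Your inductive proof of the $H_1$ bound is essentially the paper's $\sigma_i$ induction in homological form. What your version buys is that it is shorter and avoids the commutative $\Tor$ diagram. It also makes transparent that the only obstruction is $H_1(f;S/(J+K))$. For instance, equality $J\cap L_c+K=(J+K)\cap(L_c+K)$ holds exactly whenever $f_1,\dots,f_c$ is a genuine regular sequence on $S/(J+K)$.

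\emph{On your Step 2.} The step you flagged as the main obstacle is routine, since you only need the case $i=1$. Write $M=S/(J+K)$ and $M_{c-1}=M/(f_1,\dots,f_{c-1})M$. The new contribution at stage $c$ is $(0:_{M_{c-1}}f_c)(-1)$. This is contained in $H^0_{\mf}(M_{c-1})(-1)$, which vanishes in degrees above $\reg(M_{c-1})+1\le\reg(M)+1=\reg(J+K)$. This is exactly the bound you need.
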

\begin{proof}
Tensoring the short exact sequence 
\begin{align*}
0 \rightarrow \frac{S}{J \cap L_c} \rightarrow \frac{S}{J} \oplus \frac{S}{L_c} \stackrel{\phi} \rightarrow \frac{S}{J+L_c} \rightarrow 0
\end{align*} with $S/K$ over $S$, we obtain the following exact sequnce: 

\hspace{-0.5cm}
\begin{tikzcd}
{\Tor}_1^S\left(\frac{S}{J}\oplus \frac{S}{L_c},\frac{S}{K} \right)  \arrow[r, "\phi'"] \arrow[d, equal] & {\Tor}_1^S\left(\frac{S}{J+L_c},\frac{S}{K} \right) \arrow[d, equal] \arrow[r] & \frac{S}{J \cap L_c+K} \arrow[r] & \frac{S}{J+K} \oplus \frac{S}{L_c+K} \arrow[d] \\
\frac{J\cap K}{JK} \oplus \frac{L_c \cap K}{L_cK} \arrow[r,"\tilde{\sigma}"] &  \frac{(J+L_c)\cap K}{(J+L_c)K} & 0 & \arrow[l] \frac{S}{J+L_c+K}
\end{tikzcd}

As the map $\phi$ is induced by the inclusion of ideals $J$ and $L_c$ into the ideal $L_c+J$, so is the map $\phi'$, which can be identified with a map $\tilde{\sigma}$ as shown in the commutative diagram. We consider the composite map
\begin{align*}
\sigma: \frac{J\cap K}{JK} \hookrightarrow \frac{J\cap K}{JK} \oplus \frac{L_c \cap K}{L_cK} \stackrel{\tilde{\sigma}} \rightarrow \frac{(J+L_c)\cap K}{(J+L_c)K},
\end{align*} where the first arrow is the canonical inclusion. Arguing inductively, we can factor $\sigma$ as 
\begin{align*}
\sigma: \frac{J\cap K}{JK} \stackrel{\sigma_1}{\longrightarrow} \frac{(J+L_1)\cap K}{(J+L_1)K} \stackrel{\sigma_2} {\longrightarrow} \frac{(J+L_2)\cap K}{(J+L_2)K} \stackrel{\sigma_3}{\longrightarrow} \cdots \stackrel{\sigma_c}{\longrightarrow} \frac{(J+L_c)\cap K}{(J+L_c)K},
\end{align*} where $L_i = (f_1,\dots,f_i)$ for every $ i \in [c]$.

We now claim that for every 
\begin{align*}
d \ge \max \left\{\sat(J+K), \, \sat(J+L_1+K), \dots, \sat(J+L_{c-1}+K) \right\} +1
\end{align*} all $\sigma_i$'s are surjective on the degree-$d$ components. For this it will suffice to prove that for every $d \ge \sat(J + L_{i-1}+K)+1$, $\sigma_i$ is surjective on the degree-$d$ components. So let $g_i \in (J+L_i)\cap K$ be homogeneous of degree $d$. Then we can write $g_i = g_{i-1} + f_i h_i$, where $g_{i-1} \in J+L_{i-1}$ and $h_i$ is homogeneous of degree $d-1$. As $g_i \in K$, it follows that $f_i h_i \in J+L_{i-1}+K$. Since the degree of $h_i$ is at least $\sat(J+L_{i-1}+K)$, and since, being generic, $f_i$ is almost regular on $S/(J+L_{i-1}+K)$ (that is, $f_i$ is a non-zero divisor of $S/(J+L_{i-1}+K)^{\sat}$), we infer $h_i \in J+L_{i-1}+K$. So we can write $h_i = h_{i-1} + h_i'$, where $h_{i-1} \in J+L_{i-1}$ and $h_i' \in K$. Substituting in the original expression for $g_i$, we obtain 
\begin{align*}
g_i =g_{i-1} + f_i h_{i-1}+f_i h_i'.
\end{align*} As both $g_{i-1}$ and $h_{i-1}$ belong to $J+L_{i-1}$, we have 
\begin{align*}
g_{i-1}' := g_{i-1} + f_i h_{i-1} \in J+L_{i-1}. 
\end{align*} As both $g_i$ and $h_i'$ belong to $K$, we in fact have 
\begin{align*}
g_{i-1}' \in (J+L_{i-1}) \cap K. 
\end{align*}
Finally, as $f_i h_i' \in L_i K \subseteq (J+L_i)K$, we have  
\begin{align*}
g_i + (J+L_i)K = g_{i-1} + f_i h_{i-1}+ f_i h_i'+(J+L_i)K = g_{i-1}' + (J+L_i)K.
\end{align*} This proves that $\sigma_i$ is surjective, and hence that $\sigma$ is surjective at all degrees in the claim. 

As the $f_i$'s are generic, applying $i$ times Corollary 1.3 in \cite{conca2003castelnuovo}, gives that for any $i$ 
\begin{align*}
\sat(J+L_i+K) \le \reg(J+K). 
\end{align*} Hence $\sigma$ is surjective on the degree-$d$ components for $d \ge \reg(J+K)+1$. From the commutative diagram, it follows that for such $d$'s
\begin{align*}
\left[J\cap L_c +K\right]_d &= \left[(J+K) \cap (L_c+K) \right]_d. 
\end{align*} That is, eventually the ideal $J \cap L_c +K$ agrees with the ideal $(J+K) \cap (L_c+K)$, which proves that
\begin{align*}
(J\cap L_c +K)^{\sat} = \big((J+K) \cap (L_c+K) \big)^{\sat} = (J+K)^{\sat} \cap (L_c+K)^{\sat}. 
\end{align*} With that, the stated upper bound on $\sat(J \cap L_c+K)$ is clear. 
\end{proof}

Recalling the definition of $Q$ as 
\begin{align*}
Q &= \left(\bigcap_{c \in [n-1]} \bigcap_{1 \le \alpha_1 <\cdots <\alpha_{c} \le j} (x_0,f_{\alpha_1},\dots,f_{\alpha_c})^c \right) \cap \left(\bigcap_{j < i \le k}(f_i,f_i') \right), 
\end{align*} we further write 
\begin{align*}
Q &= Q' \cap \left(\bigcap_{j < i \le k}(f_i,f_i') \right), \\
Q' & := \bigcap_{c \in [n-1]} B_{c,j}^{(c)}, \\
B_{c,j} & := \bigcap_{1 \le \alpha_1 <\cdots <\alpha_{c} \le j} (x_0,f_{\alpha_1},\dots,f_{\alpha_c}), \\
A_{c,j} & := \bigcap_{1 \le \alpha_1 <\cdots <\alpha_{c} \le j} (f_{\alpha_1},\dots,f_{\alpha_c}).
\end{align*} 

\begin{lemma} \label{lem:A-i-th-colon-bar}
For any $c \in \{0, \, 1, \, \dots, \, n-2\}$ we have 
\begin{align*}
A_{c+1,j}+(x_0)  = Q' : x_0^c + (x_0) = B_{c+1,j}.
\end{align*}
\end{lemma}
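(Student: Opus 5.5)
We prove the two equalities separately. The second equality, $A_{c+1,j}+(x_0)=B_{c+1,j}$, is exactly Lemma \ref{lem:star-configuration-hyperplane-section} transported to $S$. The generic $f_i$ play the role of the $\ell_i$, and the proof there only used Proposition \ref{prp:star-configurations} and $c+1\le n-1$. So the content is the first equality, and the plan is to sandwich $Q':x_0^c+(x_0)$ between $A_{c+1,j}+(x_0)$ and $B_{c+1,j}$.

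For the inclusion $A_{c+1,j}+(x_0)\subseteq Q':x_0^c+(x_0)$, it suffices to show $x_0^c A_{c+1,j}\subseteq Q'$. That is, we need $x_0^c A_{c+1,j}\subseteq (x_0,f_{\alpha_1},\dots,f_{\alpha_e})^e$ for every $e\in[n-1]$ and every $e$-subset $\{\alpha_1,\dots,\alpha_e\}\subseteq[j]$. Write $P=(x_0,f_{\alpha_1},\dots,f_{\alpha_e})$.
\begin{itemize}
\item If $e\le c$, this is immediate, since $x_0^c\in P^c\subseteq P^e$.
\item If $e\ge c+1$, we use the generators of $A_{c+1,j}$ from Proposition \ref{prp:star-configurations}(iii). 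These are the products $f_{\beta_1}\cdots f_{\beta_{j-c}}$ over $(j-c)$-subsets of $[j]$. Such a subset misses at most $c$ indices, so it meets $\{\alpha_1,\dots,\alpha_e\}$ in at least $e-c$ elements. Hence each generator lies in $P^{e-c}$, and $x_0^c$ times it lies in $P^{c}P^{e-c}=P^e$.
\end{itemize}

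For the reverse inclusion $Q':x_0^c+(x_0)\subseteq B_{c+1,j}$, fix a $(c+1)$-subset $\{\alpha_1,\dots,\alpha_{c+1}\}$ and set $P=(x_0,f_{\alpha_1},\dots,f_{\alpha_{c+1}})$. Since $c+1\le n-1$, the ideal $P^{c+1}$ is one of the components of $Q'$. Take $g$ with $x_0^cg\in Q'\subseteq P^{c+1}$. Because $P$ is generated by part of a system of variables, the associated graded ring of $P$ is a polynomial ring. Hence the order function $v_P$ (largest power of $P$ containing an element) is additive. As $v_P(x_0)=1$, we get $c+v_P(g)\ge c+1$, so $v_P(g)\ge1$, i.e.\ $g\in P$. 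Taking all subsets, $g\in B_{c+1,j}$, and trivially $(x_0)\subseteq B_{c+1,j}$.

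Combining the two inclusions with the second equality gives
\begin{align*}
A_{c+1,j}+(x_0)\subseteq Q':x_0^c+(x_0)\subseteq B_{c+1,j}=A_{c+1,j}+(x_0),
\end{align*}
so all three ideals coincide.

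The main obstacle is the counting step in the first inclusion, where we need the precise generators of the star configuration ideal from Proposition \ref{prp:star-configurations}(iii); the rest is formal. We must also check the edge case $c=0$: there $A_{1,j}=(f_1\cdots f_j)$, and the counting argument above covers it without change.
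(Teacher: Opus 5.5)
Your proof is correct and follows the paper's argument: you use the same sandwich $A_{c+1,j}+(x_0)\subseteq Q':x_0^c+(x_0)\subseteq B_{c+1,j}$, the same counting with the star-configuration generators from Proposition \ref{prp:star-configurations}(iii), and the same appeal to the proof of Lemma \ref{lem:star-configuration-hyperplane-section} to make the inclusions equalities. The only difference is cosmetic: you justify $P^{c+1}:x_0^c\subseteq P$ by additivity of the $P$-adic order, while the paper uses the regular-sequence formula $P^{c'}:x_0^c=P^{c'-c}$ for $c'>c$, which is the same underlying fact.
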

\begin{proof}
We first argue that we have inclusions
\begin{align*}
A_{c+1,j}+(x_0)  \subseteq Q' : x_0^c + (x_0) \subseteq B_{c+1,j}.
\end{align*} Since the $f_i$'s are generic linear forms, for any $c' \in [n-1]$ and any $\{\alpha_1, \dots, \alpha_{c'}\} \subseteq [n-1]$, the polynomials $x_0,f_{\alpha_1},\dots,f_{\alpha_{c'}}$ are a regular sequence, hence 
\begin{align*}
(x_0,f_{\alpha_1},\dots,f_{\alpha_{c'}})^{c'} : x_0^c = 
\begin{cases}
(x_0,f_{\alpha_1},\dots,f_{\alpha_{c'}})^{c'-c}, & c' > c \\
S, & c' \le c.
\end{cases}
\end{align*} Consequently, the second inclusion follows from the fact that
\begin{align*}
Q' : x_0^c &= \left(\bigcap_{c' \in [n-1]} B_{c',j}^{(c')}\right) : x_0^c 
 = \bigcap_{c' \in [n-1]} \left(B_{c',j}^{(c')} : x_0^c\right) \\
 &= \bigcap_{c' \in [n-1] \setminus [c]} B_{c',j}^{(c'-c)} \subseteq B_{c+1,j}. 
\end{align*} To see the first inclusion, recall that by Proposition \ref{prp:star-configurations}(iii), the ideal $A_{c+1,j}$ is generated by all products of the form $f_{\beta_1} \cdots f_{\beta_{j-c}}$ for $1 \le \beta_1 < \cdots < \beta_{j-c} \le j$. Now, for every $c' \in [n-1] \setminus [c]$, the set 
\begin{align*}
\{\beta_1, \dots, \beta_{j-c}\} \cap \{\alpha_1, \dots, \alpha_{c'}\}
\end{align*} has cardinality at least $c'-c$, implying that 
\begin{align*}
f_{\beta_1} \cdots f_{\beta_{j-c}} \in (x_0,f_{\alpha_1},\dots,f_{\alpha_{c'}})^{c'-c}.
\end{align*} Expanding the above formula for $Q':x_0^c$, we get 
\begin{align*}
Q':x_0^c = \bigcap_{c' \in [n-1] \setminus [c]} \bigcap_{1 \le \alpha_1 <\cdots <\alpha_{c'} \le j} (x_0,f_{\alpha_1},\dots,f_{\alpha_{c'}})^{c'-c},
\end{align*} which indeed reveals the first inclusion. Finally, an identical argument as in the proof of Lemma \ref{lem:star-configuration-hyperplane-section} gives
\begin{align*}
A_{c+1,j}+(x_0)  = B_{c+1,j},
\end{align*} forcing the inclusions in the beginning of the proof to be equalities.
\end{proof}

To state our formula for $(\overline{Q:x_0^c})^{\sat}$, for any ideal $I$ of $S$ we define $\overline{I}  = \big(I+(x_0)\big) / (x_0)$. Moreover, we set $I_i = (f_i,f_i')$ for $j+1 \le i \le k$. 

\begin{lemma} \label{lem:trace of residual}
For any $c \in \{0, \, 1, \, \dots, \, n-2\}$, we have
\begin{align*}
(\overline{Q:x_0^c})^{\sat} = 
\overline{B}_{c+1,j} \cap \overline{I_{j+1}} \cap \cdots \cap \overline{I_{k}}.\end{align*} 
\end{lemma}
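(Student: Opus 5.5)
The plan is to write $Q : x_0^c$ as an intersection of a star-configuration-type ideal with the $k-j$ generic codimension-$2$ linear ideals, and then apply Lemma \ref{lem:JKLc} repeatedly with $K = (x_0)$ to pass to the hyperplane $x_0=0$ up to saturation.

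First, since $x_0$ is generic with respect to each $I_i = (f_i,f_i')$ for $i>j$, the ideal $I_i$ is prime and $x_0 \notin I_i$, so $I_i : x_0^c = I_i$. Colon commutes with intersection, so
\begin{align*}
Q : x_0^c = (Q' : x_0^c) \cap I_{j+1} \cap \cdots \cap I_k .
\end{align*}
Each $I_i$ is generated by linear forms that are generic relative to everything else in sight. In particular, after a generic change of coordinates fixing $x_0$, $I_i$ plays the role of $L_2$ in Lemma \ref{lem:JKLc}, since the lemma only uses genericity of the $f$'s with respect to $J+K$.

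Next, I would peel off the $I_i$ one at a time. Set $J_0 = Q':x_0^c$ and $J_m = J_0 \cap I_{j+1} \cap \cdots \cap I_{j+m}$. Applying the first formula of Lemma \ref{lem:JKLc} with $J = J_{m-1}$, $L_c = I_{j+m}$ and $K=(x_0)$ gives
\begin{align*}
(J_m + (x_0))^{\sat} = (J_{m-1}+(x_0))^{\sat} \cap (I_{j+m}+(x_0))^{\sat}.
\end{align*}
Here $I_{j+m}+(x_0)$ is generated by three independent linear forms, hence is already saturated. By induction on $m$,
\begin{align*}
(Q:x_0^c+(x_0))^{\sat} = \big(Q':x_0^c+(x_0)\big)^{\sat} \cap \bigcap_{i>j}\big(I_i+(x_0)\big).
\end{align*}
By Lemma \ref{lem:A-i-th-colon-bar}, $Q':x_0^c+(x_0) = B_{c+1,j}$, which is an intersection of primes and hence saturated. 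Passing to $S/(x_0)$ then gives the claimed formula with overlines. Saturation in $S$ of ideals containing $x_0$ matches saturation in $S/(x_0)$, because the maximal ideals correspond.

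The main obstacle is justifying Lemma \ref{lem:JKLc} in this setting. There the $f$'s are the first $c$ of a fixed list of generic forms, whereas here the $I_i$ are generated by generic forms that must stay generic with respect to $J_{m-1}+(x_0)$, and $J_{m-1}$ itself involves earlier generic forms. I would handle this by noting that the proof of Lemma \ref{lem:JKLc} only uses almost-regularity of each generator on $S/(J+L_{i-1}+K)$ together with the bound from Corollary 1.3 of \cite{conca2003castelnuovo}. Both hold once $f_i,f_i'$ are chosen generic after $J_{m-1}$ and $x_0$ are fixed, which is legitimate since the $I_i$ for $i>j$ are independent of $f_1,\dots,f_j$.
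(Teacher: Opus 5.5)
Your proposal is correct and follows essentially the same route as the paper. Both arguments iterate the first formula of Lemma~\ref{lem:JKLc} with $K=(x_0)$ and $L_c=I_{j+1},\dots,I_k$, then conclude via Lemma~\ref{lem:A-i-th-colon-bar} and the fact that $B_{c+1,j}$ is saturated. Your write-up is slightly more careful than the paper's: you make the splitting $Q:x_0^c=(Q':x_0^c)\cap I_{j+1}\cap\cdots\cap I_k$ explicit, you apply the lemma to the unsaturated ideals $J_{m-1}$, and you note that each $I_{j+m}$ must be generic relative to $J_{m-1}+(x_0)$, all of which the paper leaves implicit.
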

\begin{proof} 
Apply Lemma \ref{lem:JKLc} with $J = Q':x_0^c, \, K = (x_0)$ and $L_c = I_{j+1}$ to get 
\begin{align*}
\big(Q':x_0^c +(x_0)\big)^{\sat} \cap \big(I_{j+1}+(x_0)\big) = \big((Q':x_0^c)\cap I_{j+1}+(x_0) \big)^{\sat}.
\end{align*} Apply Lemma \ref{lem:JKLc} again with $J = \big((Q':x_0^c)\cap I_{j+1}+(x_0) \big)^{\sat}, \, K=(x_0)$ and $L_c = I_{j+2}$ to get 
\begin{align*}
&\big(Q':x_0^c +(x_0)\big)^{\sat} \cap \big(I_{j+1}+(x_0)\big) \cap \big(I_{j+2}+(x_0) \big) = \\
&\big((Q':x_0^c)\cap I_{j+1}+(x_0) \big)^{\sat} \cap \big(I_{j+2}+(x_0) \big) = \\
&\big((Q':x_0^c)\cap I_{j+1} \cap I_{j+2}+(x_0) \big)^{\sat}.
\end{align*} Continuing inductively, we obtain 
\begin{align*}
(\overline{Q:x_0^c})^{\sat} = (\overline{Q':x_0^c})^{\sat} \cap \overline{I_{j+1}} \cap \cdots \cap \overline{I_{k}}.
\end{align*} By Lemma \ref{lem:A-i-th-colon-bar}, we see that 
\begin{align*}
\overline{Q':x_0^c} = \frac{Q':x_0^c +(x_0)}{(x_0)} = \overline{B}_{c+1,j}.
\end{align*} and this concludes the proof because the ideal $\overline{B}_{c+1,j}$ is saturated. 
\end{proof}

\subsection{Bounding the regularity of traces of residuals} 
We establish the following bound on the regularity $(\overline{Q:x_0^c})^{\sat}$:

\begin{lemma} \label{lem:reg-Q}
For any $c \in \{0, \, 1, \, \dots, \, n-2\}$, we have 
\begin{align*}
\reg \left((\overline{Q:x_0^c})^{\sat}\right) \le (n-1)j-c.
\end{align*}
\end{lemma}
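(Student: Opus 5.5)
The plan is to bound the two ingredients of Lemma \ref{lem:trace of residual} separately, and then control the regularity of their intersection with a subadditivity estimate. Work in $\overline{S}=S/(x_0)$, which is a polynomial ring in $n$ variables, so we are in $\PP^{n-1}$. By Lemma \ref{lem:trace of residual},
\begin{align*}
(\overline{Q:x_0^c})^{\sat} = \overline{B}_{c+1,j} \cap \overline{K}, \qquad \overline{K}:=\overline{I_{j+1}} \cap \cdots \cap \overline{I_{k}}.
\end{align*}
First, $\overline{B}_{c+1,j}$ is the ideal of a codimension-$(c+1)$ star configuration of $j$ hyperplanes in $\PP^{n-1}$; this needs $c+1 \le n-1$, which holds. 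By Proposition \ref{prp:star-configurations}(ii), $\reg(\overline{B}_{c+1,j}) = j-c$. Second, $\overline{K}$ is the ideal of $k-j = \binom{j+1}{2}-j=\binom{j}{2}$ codimension-$2$ linear spaces in $\PP^{n-1}$, and these are generic because the $f_i,f_i'$ are generic and $x_0$ is fixed. Since $n-1\ge 2$ and $j-1<j$, the induction hypothesis of \S\ref{subsection:induction-hypothesis} (Theorem \ref{thm:codimension2} for $n-1$ and $j-1$, or Lemma \ref{lem:regularity of k points} when $n-1=2$) gives $\reg(\overline{K}) \le (n-2)(j-1)$.

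The numerology then leaves room to spare:
\begin{align*}
\reg(\overline{B}_{c+1,j}) + \reg(\overline{K}) \le (j-c) + (n-2)(j-1) = (n-1)j - c - (n-2).
\end{align*}
So it suffices to prove a subadditivity estimate of the form $\reg(J\cap K') \le \reg(J)+\reg(K')+(n-2)$ for $J=\overline{B}_{c+1,j}$ and $K'=\overline{K}$. I would aim for the sharper form without the $n-2$. To get it, use the exact sequence $0\to \overline{S}/(J\cap K') \to \overline{S}/J\oplus \overline{S}/K' \to \overline{S}/(J+K')\to 0$, which gives
\begin{align*}
\reg(J\cap K') \le \max\{\reg J,\ \reg K',\ \reg(J+K')+1\}.
\end{align*}
This reduces everything to a bound $\reg(\overline{S}/(J+K')) \le \reg(\overline{S}/J)+\reg(\overline{S}/K')$.

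The key point is that $K'$ is in generic position with respect to $J$. The linear forms defining $K'$ are generic, so $K'$ can be replaced by $gK'$ for a general $g\in GL_n(\EE)$ without changing its regularity, and $J$ is fixed under this operation. For a general translate, $\Tor_i^{\overline{S}}(\overline{S}/J,\overline{S}/gK')$ should have finite length for all $i\ge 1$, by a Kleiman/Miller--Speyer type transversality statement for general translates. Then the Eisenbud--Huneke--Ulrich / Caviglia bounds, which apply when $\dim \Tor_1 \le 1$, give $\reg(\overline{S}/(J+gK')) \le \reg(\overline{S}/J)+\reg(\overline{S}/gK')$. That yields $\reg(J\cap K')\le \reg J+\reg K'$, which is at most $(n-1)j-c$ with $n-2$ to spare.

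I expect the main obstacle to be the transversality step. It is clean in characteristic $0$, but over an arbitrary infinite field general translates need not be Tor-independent. If this fails, I would exploit the slack of $n-2$ instead. One route is induction on the number of linear spaces in $K'$: use $\reg(J\cap L)\le \reg(J)+1$ for a generic codimension-$2$ linear ideal $L$, which is the Derksen--Sidman style step behind Proposition \ref{prp:Derksen-Sidman}. Another is to compare with the product $J\cdot K'$, whose saturation is $J\cap K'$, and apply $\reg(JK')\le \reg J+\reg K'$, which needs only $\dim \overline{S}/(J+K')\le 1$ after cutting by generic hyperplanes via Proposition \ref{prp:J vs J cap Lc}.
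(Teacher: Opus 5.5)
Your main argument is the paper's proof: the same decomposition from Lemma~\ref{lem:trace of residual}, then finite length of $\Tor_1$ from general position (the paper cites Proposition 2.9 of the Caviglia reference for this), then Corollary 1.3 of Eisenbud--Huneke--Ulrich together with the sequence $0\to \overline{S}/(J\cap K')\to \overline{S}/J\oplus\overline{S}/K'\to\overline{S}/(J+K')\to 0$, giving $\reg \le (j-c)+(n-2)(j-1)$ with $n-2$ to spare. Your worry about positive characteristic is unfounded, because the Miller--Speyer generic Tor-vanishing theorem for transitive group actions is characteristic-free, so you need none of the fallbacks; the first one (adding the $\binom{j}{2}$ spaces one at a time) would in any case only give $(j-c)+\binom{j}{2}$, which exceeds $(n-1)j-c$ once $j\ge 2n-2$.
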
 
\begin{proof}
By Lemma \ref{lem:trace of residual}, 
\begin{align*}
(\overline{Q:x_0^c})^{\sat} = 
\overline{B}_{c+1,j} \cap \overline{I_{j+1}} \cap \cdots \cap \overline{I_{k}}.\end{align*} 
As the ideal $\cap_{j+1 \le i \le k} \overline{I}_i$ is in general coordinates with respect to the ideal $\overline{B}_{c+1,j}$, Proposition 2.9 in \cite{caviglia2016hilbert} asserts that the module 
\begin{align*}
{\Tor}_1^S\left(\frac{\overline{S}}{\overline{B}_{c+1,j}}, \frac{\overline{S}}{\cap_{j+1 \le i \le k} \overline{I}_i} \right)
\end{align*} has finite length. Hence Corollary 1.3 in \cite{EHU} yields
\begin{align*}
&\reg \left(\frac{\overline{S}}{\overline{B}_{c+1,j} + \cap_{j+1 \le i \le k} \overline{I}_i} \right) = \reg \left( {\Tor}_0^S\left(\frac{\overline{S}}{\overline{B}_{c+1,j}}, \frac{\overline{S}}{\cap_{j+1 \le i \le k} \overline{I}_i} \right) \right) \\
& \le \reg\left(\frac{\overline{S}}{\overline{B}_{c+1,j}}\right) + \reg \left( \frac{\overline{S}}{\cap_{j+1 \le i \le k} \overline{I}_i} \right) \\
& = 
\reg\left(\overline{B}_{c+1,j}\right) + \reg \left( \cap_{j+1 \le i \le k} \overline{I}_i \right)-2.
\end{align*} Hence, the regularity lemma and the short exact sequence
\begin{align*}
0 \rightarrow \frac{\overline{S}}{(\overline{Q:x_0^c})^{\sat}} \rightarrow \frac{\overline{S}}{\overline{B}_{c+1,j}} \oplus \frac{\overline{S}}{\cap_{j+1 \le i \le k} \overline{I}_i} \rightarrow \frac{\overline{S}}{\overline{B}_{c+1,j} + \cap_{j+1 \le i \le k} \overline{I}_i} \rightarrow 0,
\end{align*} together with the above inequality, give 
\begin{align*}
&\reg \left((\overline{Q:x_0^c})^{\sat}\right) \le  \\
&\max \left\{ 
\reg (\overline{B}_{c+1,j}), \, \reg (\cap_{j+1 \le i \le k} \overline{I}_i), \, \reg(\overline{B}_{c+1,j} + \cap_{j+1 \le i \le k} \overline{I}_i)+1)  
\right\} \le \\
&\reg\left(\overline{B}_{c+1,j}\right) + \reg \left( \cap_{j+1 \le i \le k} \overline{I}_i \right).
\end{align*} By Proposition \ref{prp:star-configurations}(ii)
\begin{align*}
\reg \left( \overline{B}_{c+1,j}\right) = j-c.
\end{align*} As $k-j ={j +1 \choose 2} - { j \choose 1} = {j \choose 2}$, the running induction hypothesis on $n$ gives
\begin{align*}
\reg \left(\cap_{j+1 \le i \le k} \overline{I}_i \right) \le (n-2)(j-1).
\end{align*} Now we are done, since 
\begin{align*}
\reg \left((\overline{Q:x_0^c})^{\sat}\right) & \le \reg\left(\overline{B}_{c+1,j}\right) + \reg \left( \cap_{j+1 \le i \le k} \overline{I}_i \right) \\
& = j-c + (n-2)(j-1) = (n-1)j-c-n+2 \\
&\le (n-1)j-c, 
\end{align*} where the last inequality is by our assumption that $n \ge 2$.
\end{proof}

\subsection{Bounding the regularity of $Q$}

We finish the proof of Theorem \ref{thm:main} by proving:

\begin{lemma}
$\reg(Q) \le (n-1) j$.
\end{lemma}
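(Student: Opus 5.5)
Our plan is the standard residual/trace (Castelnuovo-type) argument with respect to the hyperplane $x_0$, iterated $n-1$ times. Set $Q_c := Q : x_0^c$ for $c \ge 0$, so $Q_0 = Q$. For each $c$ we have the short exact sequence
\begin{align*}
0 \rightarrow \frac{S}{Q_{c+1}}(-1) \stackrel{\cdot x_0}{\longrightarrow} \frac{S}{Q_c} \rightarrow \frac{S}{Q_c + (x_0)} \rightarrow 0,
\end{align*}
which gives $\reg(Q_c) \le \max\{\reg(Q_{c+1})+1, \, \reg(Q_c+(x_0))\}$. To use Lemma \ref{lem:reg-Q}, we must pass from $Q_c + (x_0)$ to the saturation of its image. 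We will use the standard estimate, valid for saturated $Q_c$ (and $Q_c$ is saturated, being a colon of a saturated ideal):
\begin{align*}
\reg(Q_c) \le \max\left\{\reg(Q_{c+1})+1, \, \reg\big((\overline{Q_c})^{\sat}\big)\right\}.
\end{align*}
This follows from the long exact sequence in local cohomology. Since $H^0_\mf(S/Q_c)=0$, the $H^0_\mf$ of the cokernel injects into $H^1_\mf(S/Q_{c+1})(-1)$, and the higher cohomologies of $S/(Q_c+(x_0))$ agree with those of $\overline{S}/(\overline{Q_c})^{\sat}$ in degree $\ge 1$. Concretely, the cokernel of $\overline{Q_c} \hookrightarrow (\overline{Q_c})^{\sat}$ is $H^0_\mf$, and its top degree is controlled by $\reg(Q_{c+1})$. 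This is the usual lemma of Conca--Herzog type, and we will verify the indices carefully.

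Iterating for $c = 0, 1, \dots, n-2$ gives
\begin{align*}
\reg(Q) \le \max\left\{ \reg(Q_{n-1}) + (n-1), \, \max_{0\le c \le n-2}\big(\reg((\overline{Q_c})^{\sat}) + c\big) \right\}.
\end{align*}
By Lemma \ref{lem:reg-Q}, each term $\reg((\overline{Q_c})^{\sat}) + c$ is at most $(n-1)j$. It remains to bound $Q_{n-1}$. As in the proof of Lemma \ref{lem:A-i-th-colon-bar}, every component $(x_0,f_{\alpha_1},\dots,f_{\alpha_{c'}})^{c'}$ with $c' \le n-1$ becomes $S$ after the colon by $x_0^{n-1}$, so $Q' : x_0^{n-1} = S$. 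Each $(f_i,f_i')$ is prime and does not contain $x_0$, so colon by $x_0$ leaves it unchanged. Hence
\begin{align*}
Q_{n-1} = \bigcap_{j<i\le k} (f_i,f_i'),
\end{align*}
which is the ideal of $\binom{j}{2}$ generic codimension-$2$ spaces in $\PP^n$. By the induction hypothesis on $j$ in Theorem \ref{thm:codimension2}, $\reg(Q_{n-1}) \le (n-1)(j-1)$. Therefore $\reg(Q_{n-1}) + n-1 \le (n-1)j$, which closes the bound.

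The main obstacle is the first step: justifying the inequality that replaces $Q_c+(x_0)$ by the saturation $(\overline{Q_c})^{\sat}$ without losing an extra $+1$. We will argue through local cohomology. $H^i_\mf$ of $\overline{S}/(\overline{Q_c})^{\sat}$ agrees with that of $S/(Q_c+(x_0))$ for $i \ge 1$, and the discrepancy $H^0_\mf(S/(Q_c+(x_0)))$ sits inside $H^1_\mf(S/Q_{c+1})(-1)$. Its end degree is therefore at most $\reg(S/Q_{c+1}) = \reg(Q_{c+1})-1$, contributing at most $\reg(Q_{c+1})$ to $\reg(Q_c+(x_0))$. That lies below the bound $\reg(Q_{c+1})+1$ already present, so the estimate holds.
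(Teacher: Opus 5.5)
Your proof is correct and follows the paper's argument. You use the same iterated exact sequences for multiplication by $x_0$ on $Q:x_0^c$, the same identification $Q:x_0^{n-1}=\bigcap_{j<i\le k}(f_i,f_i')$ combined with the induction hypothesis on $j$, and Lemma \ref{lem:reg-Q} for the traces.

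The only difference is cosmetic. You work with the quotients $S/Q_c$ and bound the $H^0_\mf$ discrepancy between $S/(Q_c+(x_0))$ and $\overline{S}/(\overline{Q_c})^{\sat}$ through the connecting map into $H^1_\mf(S/Q_{c+1})(-1)$. The paper instead uses the ideal sequence and, since $Q:x_0^{c-1}$ is saturated, only needs $H^i_\mf$ for $i\ge 2$, where that discrepancy never appears. Your $H^0$ estimate is in fact unnecessary: $H^0_\mf(S/Q_c)=0$ already lets you restrict to $i\ge 1$, where the two quotients have the same local cohomology.
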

\begin{proof}
For any $c \in [n-2]$ we have a short exact sequence of graded $S$-modules
\begin{align*}
0 \rightarrow Q: x_0^c(-1) \stackrel{x_0}{\rightarrow} Q: x_0^{c-1} \rightarrow \overline{Q: x_0^{c-1}} \rightarrow 0.
\end{align*} 
This induces an exact sequence on graded components of local cohomology modules
\begin{align*}
\left[H_{\mf}^i \big(Q: x_0^c \big) \right]_{d-1} \rightarrow \left[H_{\mf}^i \big( Q: x_0^{c-1} \big) \right]_{d} \rightarrow \left[H_{\mf}^i \big(\overline{Q: x_0^{c-1}} \big)\right]_d.
\end{align*} As $Q$ is saturated, so is $Q:x_0^{c-1}$, so that 
\begin{align*}
\reg \big(Q:x_0^{c-1} \big) \le&  \max_{i \ge 2} \left\{a_i \big(Q: x_0^c \big)+i+1, \, a_i \big(\overline{Q: x_0^{c-1}} \big)+i \right\}= \\
&  \max_{i \ge 2} \left\{a_i \big(Q: x_0^c \big)+i+1, \, a_i \Big(\big(\overline{Q: x_0^{c-1}} \big)^{\sat}\Big)+i \right\},
\end{align*} where for a finite graded $S$-module $M$ we define
\begin{align*}
a_i(M) = \max \left\{ d: \, [H_{\mf}^i(M)]_d \neq 0 \right\}.
\end{align*} As a consequence, for every $c \in [n-1]$, we have 
\begin{align*}
\reg \big(Q: x_0^{c-1}\big) \le \max \left\{\reg \big(Q: x_0^c \big)+1, \reg \Big( \big( \overline{Q: x_0^{c-1}}\big)^{\sat}\Big) \right\}.
\end{align*} Combine these $n-1$ inequalities to get
\begin{align*}
\reg(Q) \le \max \left\{\reg \big( Q:x_0^{n-1}\big)+n-1, \max_{0 \le c \le n-2}\left[  \reg \Big(\big( \overline{Q: x_0^c}\big)^{\sat}\Big)+c\right]\right\}.
\end{align*} Now observe that 
\begin{align*}
Q:x_0^{n-1} = I_{j+1} \cap \cdots \cap I_k
\end{align*} is the ideal of $k-j = {j \choose 2}$ generic codimension-$2$ linear spaces in $\PP_{\KK}^n$, so that by the running induction hypothesis on $j$, we have
\begin{align*}
\reg \big(Q:x_0^{n-1} \big) \le (n-1)(j-1). 
\end{align*} Moreover, Lemma \ref{lem:reg-Q} asserts that for all $c \in \{0, \, 1, \, \dots, \, n-2\}$
\begin{align*}
\reg \left((\overline{Q:x_0^c})^{\sat}\right) \le (n-1)j-c.
\end{align*} Putting everything together, we arrive at 
\begin{align*}
\reg(Q) &\le \max \left\{(n-1)(j-1)+n-1, \max_{0 \le c \le n-2}\left[  (n-1)j-c+c\right]\right\} \\
& =(n-1)j,
\end{align*} concluding the proof of the lemma.
\end{proof}

\section{Proof of Proposition \ref{prp:lower-bound}} \label{section:lower-bound}

\subsection{Points in $\PP^2$ and lines in $\PP^3$}
The Hilbert function of the homogeneous coordinate ring $S/I_X$ of $k$ generic points in $\PP^2$ is well-known to be 
\begin{align*}
\HF(S/I_X,d) =  \min \left\{{d+2 \choose 2}, k \right\}.
\end{align*} Thus the initial degree of $I_X$ is the smallest positive integer $d$ such that 
\begin{align*}
{d+2 \choose 2} - k > 0.
\end{align*} The two roots of the quadratic on the left-hand-side are 
\begin{align*}
d_{\pm} = \frac{-3 \pm \sqrt{1+8k}}{2}.
\end{align*} As $d_{-} \le 0$, the smallest positive integer for which the quadratic is positive is indeed $\lfloor d_{+} \rfloor + 1$. The formula 
\begin{align*}
\lfloor (-5+\sqrt{1+24k})/2 \rfloor+1, 
\end{align*} for the initial degree of $I_X$ when $X$ is the union of $k$ generic lines in $\PP^3$, is derived in exactly the same way, using instead the theorem of Hartshorne \& Hirschowitz \cite{hartshorne1982droites}, which asserts that
\begin{align*}
\HF(S/I_X,d) =  \min \left\{{d+3 \choose 3}, k(d+1) \right\}.
\end{align*} 

\subsection{An induction lemma} \label{subsection:induction-lemma}

Let $I_{k,n}$ denote the ideal of $k$ generic codimension-2 linear spaces in $\PP^n$. The following lemma is an inductive device for bounding from below the initial degree of $I_{k,n}$. 

\begin{lemma} \label{lem:inductive-initial-degree}
Suppose there is an integer $1 \le j \le k-1$ such that 
$\HF(I_{k-j,n},d-1) = \HF(I_{k-j,n-1},d-j) = 0$. Then $\HF(I_{k,n},d)=0$.
\end{lemma}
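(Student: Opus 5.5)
We degenerate and use semicontinuity, following the flat family of \S\ref{subsection:flat-family}. Since $\HF(I_{k,n},d)$ for generic spaces is the minimum over all configurations (upper semicontinuity of $\dim [I]_d$ in flat families), it suffices to exhibit one specialization $Z$ of $k$ codimension-$2$ spaces that is a flat limit of generic ones and whose ideal satisfies $[I_Z]_d=0$. Take the family $J_{\xi,t}$: as $t\to 0$, the first $j$ spaces $(\ell_i,x_0+t\ell_i')$ degenerate into the hyperplane $H=\{x_0=0\}$. By Lemma \ref{lem:flat-family}, Lemma \ref{lem:J0-primary-decomposition} and the argument of Lemma \ref{lem:reg-c<=reg-0} applied to $H^0$ of the twisted ideal sheaves, we get $\dim [I_{\xi,t}]_d \le \dim [I_{\xi,0}]_d$. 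Moreover, $I_{\xi,0}\subseteq Q_1:=(x_0,\ell_1)\cap\cdots\cap(x_0,\ell_j)\cap I_{j+1}\cap\cdots\cap I_k$, keeping only the $c=1$ component. Hence it suffices to show $[Q_1]_d=0$ after generic specialization, as in the ``Returning to $S$'' subsection.

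Let $F\in[Q_1]_d$. Restricting to $H\cong\PP^{n-1}$, the image $\bar F$ vanishes on the $j$ generic hyperplanes $\{\ell_i=0\}$ of $H$, hence is divisible by $\bar\ell_1\cdots\bar\ell_j$; also $\bar F$ vanishes on the traces $H\cap V(I_i)$, $i>j$, which are $k-j$ generic codimension-$2$ spaces in $\PP^{n-1}$ (the $I_i$ are generic with respect to $x_0$). Since the $\ell_i$ are generic, $\bar\ell_i$ does not vanish identically on any of these traces, so $\bar F/(\bar\ell_1\cdots\bar\ell_j)\in [I_{k-j,n-1}]_{d-j}$ (up to the primality of each trace ideal). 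By hypothesis this space is $0$, so $\bar F=0$, i.e.\ $F=x_0G$ with $\deg G=d-1$.

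Then $x_0G\in I_i$ for $i>j$ and $x_0\notin I_i$ with $I_i$ prime, so $G\in I_{j+1}\cap\cdots\cap I_k$, which is the ideal of $k-j$ generic codimension-$2$ spaces in $\PP^n$. By the hypothesis $\HF(I_{k-j,n},d-1)=0$ we get $G=0$, hence $F=0$. So $[Q_1]_d=0$, hence $[I_{\xi,0}]_d=0$, hence $\HF(I_{k,n},d)=0$.

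The main obstacle is the semicontinuity step. One must justify that $\dim[I]_d$ for the saturated generic-fiber ideal is bounded by that of the special fiber's saturation; that is, one needs $H^0$ of the twisted ideal sheaf rather than the higher cohomology used in Lemma \ref{lem:reg-c<=reg-0}. This follows from the same flatness and Theorem III.12.8 of \cite{hartshorneAG} with $i=0$. One must also check that $j\le k-1$ and genericity make the restriction to $H$ behave as claimed; the unused assumption $j\ge 2n-2$ of the main proof is not needed, since flatness only requires the primary decomposition of $J_{\xi,t}$. If $j<n$, one uses Proposition \ref{prp:J-cap=prod} instead of the $j\ge n+1$ case, or bypasses it by the direct Hilbert polynomial comparison.
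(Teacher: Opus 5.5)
Your proposal is correct. Its second half is the paper's argument: you restrict $F$ to $x_0=0$, peel off $\bar\ell_1\cdots\bar\ell_j$ to land in $I_{k-j,n-1}$ in degree $d-j$, and then divide by $x_0$ to land in $I_{k-j,n}$ in degree $d-1$. The paper packages this as the inequality $\HF(J_0',d)\le \HF(J_0':x_0,d-1)+\HF\big(((J_0'+(x_0))/(x_0))^{\sat},d\big)$, where $J_0'$ is your $Q_1$.

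Where you differ is the degeneration step. The paper never uses the product ideal $J_{\xi,t}$ here. It degenerates the \emph{intersection} $J_t=(\ell_1,x_0+t\ell_1')\cap\cdots\cap(\ell_k,\ell_k')$ in $S[t]$.
\begin{itemize}
\item For this family, flatness over $\KK[t]$ is immediate: every associated prime contracts to $0$, so $S[t]/J_t$ is torsion-free over a PID.
\item Hence the Hilbert function is constant along the family.
\item The special fiber $(J_t+(t))/(t)$ simply sits inside $J_0'$.
\end{itemize}
No saturations, ideal sheaves or semicontinuity are needed, and nothing depends on how large $j$ is relative to $n$.

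Your route instead imports Lemma \ref{lem:flat-family} and upper semicontinuity of $h^0$ of the twisted ideal sheaf. That only gives an inequality, which is enough here, but it costs more. This lemma is applied with small $j$; for instance, in the planes-in-$\PP^4$ argument with $d=2$ one has $n=4$ and $j=2$.
\begin{itemize}
\item The second proof of Lemma \ref{lem:flat-family} is written only for $j\ge n+1$, so it does not cover this case.
\item Your fallback for $j<n$ is Proposition \ref{prp:J-cap=prod}, but it only handles the first $j$ factors. It says nothing about the product with the remaining factors $(\ell_i,\ell_i')$, $i>j$.
\item What actually carries your argument is the first (Betti-number and Hilbert-polynomial) proof of flatness, which has no hypotheses. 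You should invoke that one explicitly.
\end{itemize}
You also do not need Lemma \ref{lem:J0-primary-decomposition}, which assumes $k\ge n$. The inclusion $J_{\xi,0}^{\sat}\subseteq Q_1$ is trivial, because each $(x_0,\ell_i)$ and $(\ell_i,\ell_i')$ is a non-maximal prime containing $J_{\xi,0}$.

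With these adjustments your proof goes through. The paper's choice of family gets the same conclusion with much less machinery.
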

\begin{proof}
Let $\ell_i, \, \ell_i', \, i \in [k]$, be generic linear forms of $S$. With $t$ a transcendental over $S$, consider the extended polynomial ring $S[t]$, equipped with a grading that extends the grading of $S$ by assigning degree zero to $t$. Consider the ideal of $S[t]$ given by 
\begin{align*}
J_t = (\ell_1, x_0 +t \ell_1') \cap \cdots \cap (\ell_j, x_0 +t \ell_j') \cap (\ell_{j+1},\ell_{j+1}') \cap \cdots \cap (\ell_k,\ell_k').
\end{align*} As all associated primes of $S[t] / J_t$ contract to the zero ideal in $\KK[t]$, we have that $S[t] / J_t$ is flat over $\KK[t]$. Consequently, by Exercise 20.14 in \cite{Eisenbud:CA}, the Hilbert function of the fiber ring $S /J_c := S[t] / \big(J_t +(t-c)\big)$ is constant for any $c \in \KK$. As $I_X$ is isomorphic to $J_c$ for a generic choice of $c$, it suffices to prove $\HF(J_0,d)=0$. As we have an inclusion of ideals of $S  = S[t]/(t)$ 
\begin{align*}
J_0 := \frac{J_t + (t)}{(t)} \hookrightarrow (\ell_1, x_0 ) \cap \cdots \cap (\ell_j, x_0) \cap (\ell_{j+1},\ell_{j+1}') \cap \cdots \cap (\ell_k,\ell_k'):= J_0',
\end{align*} it will further suffice to prove that $\HF(J_0',d)=0$. We will achieve this by showing that the two terms in the right-hand-side of the following inequality are zero: 
\begin{align*}
\HF(J_0',d) \le \HF(J_0':x_0,d-1) + \HF\left(\left(\frac{J_0'+(x_0)}{(x_0)}\right)^{\sat},d \right)
\end{align*} As $J_0':x_0 = I_{k-j,n}$, the first term vanishes by the hypothesis. To control the second term, notice that
\begin{align*}
\left(\frac{J_0'+(x_0)}{(x_0)}\right)^{\sat} \hookrightarrow (\overline{\ell}_1\cdots \overline{\ell}_j) \cap (\overline{\ell}_{j+1},\overline{\ell}_{j+1}') \cap \cdots \cap (\overline{\ell}_k,\overline{\ell}_k') \cong I_{k-j,n-1}(-j),
\end{align*} where the overbar notation indicates modulo $x_0$, and $I_{k-j,n-1}(-j)$ is the ideal $I_{k-j,n-1}$ shifted in degree by $-j$. It thus suffices to prove that $\HF(I_{k-j,n-1},d-j) = 0$, which is true by hypothesis.
\end{proof}

\subsection{Planes in $\PP^4$} We next bound from below the initial degree of the ideal of planes in $\PP^4$.

\begin{lemma}
Let $X$ be the union of $k$ generic $2$-dimensional linear spaces in $\PP^4$. Then the initial degree of $I_X$ is at least $3 \sqrt{k} -4$.
\end{lemma}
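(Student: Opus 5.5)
We apply Lemma \ref{lem:inductive-initial-degree} with $n=4$ and $n-1=3$, using the Hartshorne--Hirschowitz formula for lines in $\PP^3$ as the base input. Concretely, we prove by strong induction on $k$ that $\HF(I_{k,4},d)=0$ whenever $d < 3\sqrt{k}-4$. Equivalently, we show $I_{k,4}$ vanishes in every degree $d \le \lceil 3\sqrt{k}-4\rceil -1$. It suffices to treat an integer $d$ with $d< 3\sqrt{k}-4$ and to find an integer $1\le j\le k-1$ with
\begin{align*}
\HF(I_{k-j,4},d-1)=0 \quad\text{and}\quad \HF(I_{k-j,3},d-j)=0.
\end{align*}

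For the second condition we use the formula for lines in $\PP^3$. $I_{m,3}$ vanishes in degree $e$ exactly when ${e+3\choose 3}\le m(e+1)$, i.e. $(e+2)(e+3)\le 6m$. So we need $(d-j+2)(d-j+3)\le 6(k-j)$; a sufficient condition is $(d-j+3)^2\le 6(k-j)$. For the first condition, the induction hypothesis gives vanishing as soon as $d-1 < 3\sqrt{k-j}-4$, i.e. $d+3<3\sqrt{k-j}$. Equivalently, $(d+3)^2 < 9(k-j)$ (say $d\ge 0$; small $d$ is trivial since $I_X$ has no elements of degree $0$ and, for $k\ge 2$, none of degree $1$).

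The key choice is therefore $j$ as a function of $d$ and $k$. Write $D=d+3<3\sqrt{k}$, so $k> D^2/9$. The first condition asks $j< k-D^2/9$, and the second asks $(D-j)^2\le 6(k-j)$. Natural choice: take $j$ about $D/3$ (roughly $\sqrt{k}$). Then the second condition reads $\tfrac{4}{9}D^2\lesssim 6k-2D$. This is comfortable since $6k> \tfrac{2}{3}D^2$ and the slack $\tfrac{2}{9}D^2$ dominates $2D$ for moderate $D$. The first condition becomes $D/3 < k - D^2/9$. This needs $k - D^2/9 > D/3$, which can fail when $k$ is just above $D^2/9$.

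This is the main obstacle: the $\sqrt{k-j}$ loss in the inductive step costs roughly $3j/(2\sqrt{k})\approx 1$ in degree, which must be compensated by the step from $d$ to $d-1$. So one should instead choose $j$ as large as the first condition allows, $j=\lceil k - (D^2-1)/9\rceil$-ish, i.e. $k-j$ about $D^2/9$. Then verify the second condition $(D-j)^2\le 6(k-j)\approx \tfrac{2}{3}D^2$, i.e. $j\ge D(1-\sqrt{2/3})\approx 0.18D$. This holds whenever $k-D^2/9\gtrsim 0.18D$. In the remaining range where $k$ is within $O(\sqrt{k})$ of $D^2/9$, I would exploit the $-4$ slack: since $D<3\sqrt{k}$ with $d$ integer, we actually have $D\le\lceil 3\sqrt k\rceil-1$, and we may take $j$ in the window $[0.18D,\,0.25D]$ with a careful integer check. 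Where needed, we can also use the sharper induction hypothesis $\indeg(I_{m,4})\ge\lfloor(-5+\sqrt{1+24m})/2\rfloor+1$ coming from the $n=4$ case of the codimension-two bound derived from lines. Finally, we handle the base cases (small $k$, say $k\le 9$, where $3\sqrt k-4\le 5$) directly via Proposition~\ref{prp:Derksen-Sidman}-type or explicit Hilbert-function bounds, checking the finitely many integer inequalities by hand.
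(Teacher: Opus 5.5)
\textbf{There is a genuine gap in the key step.} Your framework is the paper's: Lemma~\ref{lem:inductive-initial-degree} with $n=4$, Hartshorne--Hirschowitz for the $\PP^3$ term, and an induction where $k-j$ is as small as the degree-$(d-1)$ hypothesis allows. (The paper inducts on $d$ with $k'=\lceil (d+4)^2/9\rceil$ and $j=k-k'$.) However, you never show that an admissible $j$ exists, and that verification is the whole content of the proof.

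The obstacle you describe comes from a lossy step. From $d<3\sqrt k-4$ you keep only $D=d+3<3\sqrt k$, i.e.\ $k>D^2/9$. The hypothesis actually says $D+1<3\sqrt k$, i.e.\ $9k>(D+1)^2$.

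Under your weakened hypothesis the scheme genuinely fails. If $k=\lfloor D^2/9\rfloor+1$, the first condition leaves no $j\ge 1$ at all. More generally it forces $j<k-D^2/9$, while the second condition needs $j$ of size at least about $(1-\sqrt{2/3})D\approx 0.18D$. So moving $j$ within a ``window $[0.18D,0.25D]$'' cannot help. Your other remedies do not repair this either:
\begin{itemize}
\item $D\le\lceil 3\sqrt k\rceil-1$ is the same weak information.
\item The bound $\lfloor(-5+\sqrt{1+24m})/2\rfloor+1\approx\sqrt{6m}$ coming from lines is asymptotically weaker than $3\sqrt m$, so it does not help where the difficulty lies.
\item Proposition~\ref{prp:Derksen-Sidman} bounds regularity from above and gives no vanishing.
\item Base cases $k\le 9$ are far below the range where an asymptotic estimate takes over.
\end{itemize}

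\textbf{How to close it.} Use $9k>(d+4)^2$ in full. Let $m$ be the least integer with $9m>(d+3)^2$ and put $j=k-m$.
\begin{itemize}
\item Induction gives $\HF(I_{m,4},d-1)=0$.
\item Since $(d+4)^2-(d+3)^2=2d+7\ge 9$ for $d\ge 1$, you get $j\ge 1$ and in fact $j>(2d-2)/9$.
\item Hence $e=d-j$ satisfies $e+3<(7d+29)/9$ (if $e<0$ there is nothing to check).
\item Also $6m>\tfrac23(d+3)^2$.
\end{itemize}
So the Hartshorne--Hirschowitz condition $(e+2)(e+3)\le 6m$ follows from $(7d+29)^2\le 54(d+3)^2$, i.e.\ $5d^2-82d-355\ge 0$. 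This holds for $d\ge 20$. The cases $1\le d\le 19$ must be checked with the exact values of $m$ and $j$. They all pass; for instance $d=18$ gives $m=50$, $j=4$, $e=14$, and $17\cdot 16=272\le 300$.

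\textbf{Comparison with the paper.} The paper avoids this case analysis by running the same induction at the threshold $9k\ge(d+5)^2$. There the analogous estimate $(**)$ reduces to $\tfrac{\sqrt6}{3}(d+4)\ge\tfrac79 d+3$, which holds for every $d$. Be aware, though, that this yields vanishing only for $d\le 3\sqrt k-5$, i.e.\ $\indeg(I_X)\ge\lfloor 3\sqrt k\rfloor-4$. Your sharper threshold is the one that matches the statement as written, at the cost of the finite check above.
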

\begin{proof}
We will show that for any $k, d$ satisfying 
\begin{align*}
9 k \ge (d+5)^2,
\end{align*} the conditions of Lemma \ref{lem:inductive-initial-degree} are met. We do this by induction on $d \ge 1$. When $d=1$, there are at least $4$ planes, which are certainly not contained in any hyperplane. 

Suppose $d \ge 2$ and set $k=\lceil\frac{(d+5)^2}{9} \rceil$ and $k'=\lceil \frac{(d+4)^2}{9} \rceil$. The induction hypothesis asserts that $\HF(I_{k',4},d-1) = 0$, where $I_{k',4}$ is the ideal of the union of $k'$ generic planes in $\PP^4$. To finish, we need to prove that $\HF(I_{k',3},d-k+k') = 0$, where $I_{k',3}$ is the ideal of the union of $k'$ generic lines in $\PP^3$. For this, it is enough, by Hartshorne \& Hirschowitz, to show 
\begin{align*}
{d-k+k'+3 \choose 3} \le k' (d-k+k'+1). \tag{$*$}
\end{align*} In turn, this will follow once we establish the numerical inequality
\begin{align*}
6 \, \frac{(d+4)^2}{9} \geq \left[ d-\frac{(d+5)^2}{9}+\frac{(d+4)^2}{9}+1+3 \right]^2 \tag{$**$},
\end{align*} for then the outermost of the following inequalities is $(*)$:
\begin{align*}
6 k' \geq 6 \, \frac{(d+4)^2}{9} & \stackrel{(**)}{\geq} \left[ d-\frac{(d+5)^2}{9}+\frac{(d+4)^2}{9}+1+3 \right]^2 \\
& \geq (d-k+k'+3)^2 \\
& \ge (d-k+k'+3)(d-k+k'+2)
\end{align*} To see that $(**)$ is true, take the square-root of both sides to obtain
\begin{align*}
\frac{\sqrt{6}}{3} d+\frac{4\sqrt{6}}{3} \geq \frac{7}{9}d+3,
\end{align*} and note that $\sqrt{6}/3 > 7/9$ and $4 \sqrt{6}/3 > 3$.
\end{proof}

\subsection{Ascension of vanishing}
When $X$ is the union of $k$ generic codimension $2$ linear spaces in $\PP^n$ with $n \ge 4$, a lower bound on the initial degree of $I_{X}$ follows as the maximum of the corresponding lower bounds for points in $\PP^2$, lines in $\PP^3$ and planes in $\PP^4$, via \emph{ascension of vanishing}: 

\begin{lemma} \label{lem:ascencion of vanishing degree}
Let $k'$ be a positive integer and $n \ge 2$. If the ideal $I_{k',n-1}$ of the union of $k'$ generic codimension-$2$ linear spaces in $\PP_{\KK}^{n-1}$ is zero at degree $d$, then so is the ideal $I_{k',n}$ of the union of $k'$ generic codimension-$2$ linear spaces in $\PP_{\KK}^{n}$.
\end{lemma}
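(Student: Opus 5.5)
We argue by contraposition via a hyperplane section: if $I_{k',n}$ has a nonzero form of degree $d$, we produce a nonzero form of degree $d$ vanishing on $k'$ generic codimension-$2$ linear spaces of a hyperplane $H\cong\PP^{n-1}$. The natural choice is to restrict to a generic hyperplane. The codimension-$2$ spaces $\Lambda_i=V(\ell_i,\ell_i')\subset\PP^n$ meet a generic hyperplane $H=V(h)$ in codimension-$2$ subspaces $\Lambda_i\cap H$ of $H\cong\PP^{n-1}$. Since the $\ell_i,\ell_i'$ are generic and $h$ is generic, the restricted forms $\overline{\ell}_i,\overline{\ell}_i'$ in $S/(h)\cong\KK[y_0,\dots,y_{n-1}]$ are again generic. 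Concretely, the map from pairs of linear forms on $\PP^n$ to pairs on $H$ is a surjective linear map, so it sends a dense open set onto a set containing a dense open set. Hence $\bigcup_i(\Lambda_i\cap H)$ is a union of $k'$ generic codimension-$2$ linear spaces in $\PP^{n-1}$, and its ideal in degree $d$ has the same dimension as $[I_{k',n-1}]_d$, which is $0$ by hypothesis.

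The main step is to ensure that a nonzero $F\in[I_{k',n}]_d$ restricts to a nonzero form on $H$. Restriction gives $\overline{F}\in S/(h)$, which vanishes on every $\Lambda_i\cap H$, so $\overline{F}\in[I_{k',n-1}]_d$ after the identification above. This forces $\overline{F}=0$, i.e.\ $h\mid F$. The worry is that $h$ depends on nothing in $F$, so we must choose $h$ \emph{after} $F$, or argue uniformly. I would choose $h$ generic with respect to both the arrangement and $F$: $F$ has only finitely many linear factors, so a generic $h$ does not divide $F$. It remains to confirm that such an $h$ still keeps the restricted arrangement generic. Both conditions are nonempty Zariski-open in the space of $h$, since $\KK$ is infinite, so they can be met simultaneously.

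I expect the subtle point to be exactly this genericity bookkeeping. The hypothesis concerns ``generic'' configurations in $\PP^{n-1}$, meaning a dense open set $U$ of parameter space on which $[I]_d=0$. So I must check that for fixed generic $\Lambda_i$ and generic $h$ the restricted data lands in $U$. To handle this cleanly, I would parametrize jointly by $(h,\ell_i,\ell_i')$ and choose coordinates so that $h=x_n$. Restriction then simply sets $x_n=0$, and the restricted forms are the truncations of $\ell_i,\ell_i'$. A generic tuple of forms on $\PP^n$ has generic truncations, since truncation is a projection. It follows that for $(\ell_i,\ell_i')$ in a dense open set, the truncations lie in $U$. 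Taking any nonzero $F$ and changing coordinates so that $x_n\nmid F$ while preserving genericity, which is possible because $GL_{n+1}$ acts and generic is a $GL$-invariant dense open condition, yields the contradiction. Therefore $[I_{k',n}]_d=0$.
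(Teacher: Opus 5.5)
Your argument is correct, but it handles the possibility that the hyperplane divides $F$ differently from the paper.

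\textbf{The paper's route.} The paper fixes one linear form $\ell$, chosen jointly generic with the arrangement and independent of any element of the ideal. Then $\bigl(I_{k',n}+(\ell)\bigr)/(\ell)$ embeds into $I_{k',n-1}$ with no further genericity bookkeeping. It does not try to avoid $\ell\mid F$. Instead it uses the exact sequence $0\to[I_{k',n}]_{\nu-1}\to[I_{k',n}]_{\nu}\to[(I_{k',n}+(\ell))/(\ell)]_{\nu}\to 0$, whose first map is multiplication by $\ell$ and whose exactness in the middle uses $I_{k',n}:\ell=I_{k',n}$. Together with the vanishing of $[I_{k',n-1}]_\nu$ for all $\nu\le d$, this gives $[I_{k',n}]_d\cong[I_{k',n}]_{d-1}\cong\cdots$, descending to a degree where the ideal is trivially zero. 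In your language: if $\ell\mid F$, then $F/\ell$ is a nonzero element of $[I_{k',n}]_{d-1}$, and one repeats.

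\textbf{Your route.} You choose the hyperplane after $F$ so that $h\nmid F$. This gives a one-shot contradiction with no descent. The price is that you must satisfy an $F$-dependent open condition and the genericity of the section at the same time. The paper's single hyperplane works uniformly for all forms and all degrees.

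\textbf{One justification to fix.} That simultaneous-genericity step is sound in substance, but your stated reason is slightly off. The set $W$ of arrangements whose truncations lie in $U$ is not $GL_{n+1}$-invariant, because its definition singles out $V(x_n)$; a coordinate change can put some $\Lambda_i$ inside that hyperplane. What you actually need is weaker. For a fixed $a\in W$, the set $\{g\in GL_{n+1}: g\cdot a\in W\}$ is open and contains the identity, while $\{g: x_n\nmid g\cdot F\}$ is open and nonempty. Since $GL_{n+1}$ is irreducible and $\KK$ is infinite, these two sets share a $\KK$-point, and that point gives the contradiction for every $a\in W$.
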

\begin{proof}
Writing $I_{k',n} = \cap_{i \in [k']} I_i$, where $I_i$ is an ideal of $S$ generated by two generic linear forms, we may assume that 
\begin{align*}
I_{k',n-1} = \frac{\cap_{i \in [k']} \big(I_i+(\ell)\big)}{(\ell)},
\end{align*} where $\ell$ is a generic linear form. We have the short exact sequence of $\KK$-vector spaces
\begin{align*}
0 \rightarrow \left[ I_{k',n}\right]_{d-1} \rightarrow [I_{k',n}]_d \rightarrow \left[\frac{ I_{k',n} +(\ell)}{(\ell)}\right]_{d} \rightarrow 0.
\end{align*} From the embedding
\begin{align*}
\left[\frac{ I_{k',n} +(\ell)}{(\ell)}\right]_{d} \hookrightarrow \left[ I_{k',n-1}\right]_{d} 
\end{align*} and the hypothesis that $\left[ I_{k',n-1}\right]_{d} =0$, we get 
\begin{align*}
\left[ I_{k',n}\right]_{d-1} \cong \left[ I_{k',n}\right]_{d}.
\end{align*} But as $I_{k',n-1}$ vanishes at degree $d$, it must vanish at any degree $\nu \le d$, so for every $\nu \le d$ we have
\begin{align*}
\left[ I_{k',n}\right]_{\nu-1} \cong \left[ I_{k',n}\right]_{\nu}.
\end{align*} It follows that $[I_{k',n}]_d = 0$.
\end{proof}

\bibliography{regularity_sublinear}

\end{document}